\documentclass[11 pt, reqno]{amsart}
%, reqno

\setcounter{secnumdepth}{2}
\setcounter{tocdepth}{1}

\usepackage{amsfonts}
\usepackage{amssymb, amsmath, amsthm, enumitem}
\usepackage[colorlinks=true, linkcolor = blue, citecolor= Green]{hyperref}
\usepackage[alphabetic,lite]{amsrefs}
\usepackage{amscd}   % for commutative diagrams
\usepackage{fullpage}% !TEX root =  
\usepackage[all]{xy} % for complicated commutative diagrams
\usepackage{faktor}
\usepackage[utf8x]{inputenc}
\usepackage{verbatim}
\usepackage{mathrsfs}
\usepackage{epstopdf}
\usepackage[normalem]{ulem}
\usepackage{setspace}
\DeclareMathAlphabet{\mathpzc}{OT1}{pzc}{m}{it}
% Note: if you have problems with the Sha in cyrillic font,
% change the following ``paragraph'' to \newcommand{\Sha}{{\operatorname{Sha}}}
\DeclareFontEncoding{OT2}{}{} % to enable usage of cyrillic fonts

% note: cmr might work in place of wncyr

% Color comments!
\usepackage[usenames,dvipsnames]{color}
% Color comments

\makeatletter
\def\@tocline#1#2#3#4#5#6#7{\relax
  \ifnum #1>\c@tocdepth % then omit
  \else
    \par \addpenalty\@secpenalty\addvspace{#2}%
    \begingroup \hyphenpenalty\@M
    \@ifempty{#4}{%
      \@tempdima\csname r@tocindent\number#1\endcsname\relax
    }{%
      \@tempdima#4\relax
    }%
    \parindent\z@ \leftskip#3\relax \advance\leftskip\@tempdima\relax
    \rightskip\@pnumwidth plus4em \parfillskip-\@pnumwidth
    #5\leavevmode\hskip-\@tempdima
      \ifcase #1
       \or\or \hskip 1em \or \hskip 2em \else \hskip 3em \fi%
      #6\nobreak\relax
    \hfill\hbox to\@pnumwidth{\@tocpagenum{#7}}\par% <---- \dotfill -> \hfill
    \nobreak
    \endgroup
  \fi}
\makeatother

% Theorems

\newtheorem{lemma}{Lemma}[section]
\newtheorem{theorem}[lemma]{Theorem}
\newtheorem{proposition}[lemma]{Proposition}

\newtheorem{conjecture}[lemma]{Conjecture}

\newtheorem{claim*}{Claim}

\newtheorem{definition}[lemma]{Definition}

\theoremstyle{definition}
\newtheorem{remark}[lemma]{Remark}

%Characters
\newcommand{\A}{{\mathbb A}}

\newcommand{\PP}{{\mathbb P}}
\newcommand{\C}{{\mathbb C}}
\newcommand{\F}{{\mathbb F}}
\newcommand{\Q}{{\mathbb Q}}
\newcommand{\R}{{\mathbb R}}
\newcommand{\Z}{{\mathbb Z}}

\newcommand{\EE}{{\mathbb E}}

\newcommand{\Zhat}{{\hat{\Z}}}

\newcommand{\kk}{{\mathbf k}}

% mathcal characters
\newcommand{\calA}{{\mathcal A}}

\newcommand{\calC}{{\mathcal C}}
\newcommand{\calD}{{\mathcal D}}
\newcommand{\calE}{{\mathcal E}}
\newcommand{\calF}{{\mathcal F}}

\newcommand{\calH}{{\mathcal H}}
\newcommand{\calI}{{\mathcal I}}

\newcommand{\calO}{{\mathcal O}}
\newcommand{\calP}{{\mathcal P}}

\newcommand{\calS}{{\mathcal S}}
\newcommand{\calT}{{\mathcal T}}
\newcommand{\calU}{{\mathcal U}}

\newcommand{\calX}{{\mathcal X}}
\newcommand{\calY}{{\mathcal Y}}
\newcommand{\calZ}{{\mathcal Z}}

% mathfrak characters

\newcommand{\frakm}{{\mathfrak m}}

\newcommand{\frakp}{{\mathfrak p}}

\newcommand{\frakz}{{\mathfrak z}}

\newcommand{\frakP}{{\mathfrak P}}

\newcommand{\frakS}{{\mathfrak S}}

% mathscr characters
%\usepackage[mathscr]{euscript}

\newcommand{\scrO}{{\mathscr O}}
\newcommand{\scrP}{{\mathscr P}}

% Math operators

\DeclareMathOperator{\tr}{tr}

\DeclareMathOperator{\Frob}{Frob}

\DeclareMathOperator{\Char}{char}

\DeclareMathOperator{\im}{im}

\DeclareMathOperator{\Hom}{Hom}

\DeclareMathOperator{\Aut}{Aut}
\DeclareMathOperator{\Gal}{Gal}

\DeclareMathOperator{\Cl}{Cl}

\DeclareMathOperator{\ord}{ord}

\DeclareMathOperator{\Spec}{Spec}

\DeclareMathOperator{\et}{\acute{e}t}

\DeclareMathOperator{\Disc}{Disc}

\DeclareMathOperator{\val}{val}

\DeclareMathOperator{\Id}{Id}
\DeclareMathOperator{\Sur}{Sur}

\DeclareMathOperator{\Hur}{Hur}

\DeclareMathOperator{\CHur}{CHur}

\DeclareMathOperator{\rDisc}{rDisc}

\DeclareMathOperator{\Prob}{Prob}

\DeclareMathOperator{\Conf}{Conf}
\DeclareMathOperator{\PConf}{PConf}

\DeclareMathOperator{\an}{an}
\DeclareMathOperator{\Top}{top}
\DeclareMathOperator{\ab}{ab}
\DeclareMathOperator{\ad}{ad}
\DeclareMathOperator{\odd}{odd}

%?

% Commands

\numberwithin{equation}{section}
\numberwithin{table}{section}

 % for defined terms

\usepackage{tikz-cd}

%%%%%%%%%%%%%%%%%%%%%%%%%%%%%%%%%%%%%%%%%%%%%%%%%%%%%%%%%%%%%%%%%%%%%%%%%%%%%%%%
%%%%%%%%%%%%%%%%%%			Beginning of Document		%%%%%%%%%%%%%%%%%%%%%%%%
%%%%%%%%%%%%%%%%%%%%%%%%%%%%%%%%%%%%%%%%%%%%%%%%%%%%%%%%%%%%%%%%%%%%%%%%%%%%%%%%

\title{The imaginary case of the nonabelian Cohen--Lenstra heuristics}
\author{Yuan Liu}
\address{Department of Mathematics\\
University of Illinois at Urbana-Champaign \\ 1409 W Green St \\
Urbana, IL 48109 USA}  
\email{yyyliu@illinois.edu}

\author{Ken Willyard}
\address{Department of Mathematics\\
University of Illinois at Urbana-Champaign \\ 1409 W Green St \\
Urbana, IL 48109 USA}  
\email{kenw2@illinois.edu}

\begin{document}

	\maketitle

	%%%%%%%%%%%%%%%%%%%%%%%%%%%%%%%%%%%%%%%%%%%%%%%%%%%%%%%%%%%%%%%%%%%%%%%%%%%%
	 \begin{abstract}
		For a finite group $\Gamma$, we study the distribution of the Galois group $G_{\O}^{\#}(K)$ of the maximal unramified extension of $K$ that is split completely at $\infty$ and has degree prime to $|\Gamma|$ and $\Char(K)$, as $K$ varies over imaginary $\Gamma$-extensions of $\Q$ or $\F_q(t)$. In the function field case, we compute the moments of the distribution of $G_{\O}^{\#}(K)$ by counting points on Hurwitz stacks. In order to understand the probability of the distribution, we prove that $G_{\O}^{\#}(K)$ admits presentations of a specific form, then use this presentation to build random groups to simulate the behavior of $G_{\O}^{\#}(K)$, and make the conjecture to predict the distribution using the probability measures of these random groups. Our results provide the imaginary analog of the work of Wood, Zureick-Brown, and the first author on the nonabelian Cohen--Lenstra heuristics. 
	 \end{abstract}
	%%%%%%%%%%%%%%%%%%%%%%%%%%%%%%%%%%%%%%%%%%%%%%%%%%%%%%%%%%%%%%%%%%%%%%%%%%%%

	\hypersetup{linkcolor=black}
	\tableofcontents
	\hypersetup{linkcolor=blue}
	
\section{Introduction}
	
	The Cohen--Lenstra heuristics \cite{Cohen-Lenstra} are a set of precise conjectures about the distribution of class groups of quadratic number fields. Their conjectures say that, given an abelian $p$-group $H$ for an odd prime $p$, the probability that the $p$-primary part of class group, denoted by $\Cl(K)[p^{\infty}]$ of a quadratic number field $K$ is isomorphic to $H$ is inversely proportional to $|\Aut(H)|$ if $K$ is imaginary, and is inversely proportional to $|H||\Aut(H)|$ if $K$ is real. Friedman and Washington gave the function field analog of the Cohen--Lenstra heuristics in \cite{Friedman-Washington}, with the base field $\Q$ replaced with $\F_q(t)$. In the function field case, there is also a significant difference between the distributions for the imaginary quadratic extensions and for the real quadratic extensions, where ``imaginary'' and ``real'' refer to the ramification at the place $\infty$. Huge breakthroughs have been made in proving the Friedman--Washington conjecture and its generalizations, following the remarkable work of Ellenberg, Venkatesh and Westerland \cite{EVW} which translates the distribution conjectures to point-counting problems on Hurwitz spaces.

	There has been a growing interest in the study of the nonabelian generalizations of the Cohen--Lenstra heurstics. Boston, Bush, and Hajir studied the distribution of $p$-class tower groups for imaginary quadratic fields in \cite{BBH-imaginary} and for real quadratic fields in \cite{BBH-real}. 
	In \cite{LWZB}, Wood, Zureick-Brown, and the first author studied the distribution of the Galois groups of the maximal unramified extensions of totally real number fields which are Galois extensions of $\Q$ with a fixed Galois group. 
	Let $\Gamma$ be a fixed finite group. We say $K$ is a $\Gamma$-extension of $\Q$ if $K/\Q$ is Galois with Galois group isomorphic to $\Gamma$. Let $K_{\O}$ be the maximal unramified extension of $K$ and define $G_{\O}(K):=\Gal(K_{\O}/K)$. The work \cite{LWZB} gives conjectures about the distribution of the pro-prime-to-$2|\Gamma|$ completion of $G_{\O}(K)$ as $K$ varies over all totally real $\Gamma$-extensions of $\Q$, and proves a version of the function field analog of the conjecture for the moments of the distribution. 
	The results and methods in \cite{LWZB} cannot be directly generalized to the imaginary case (i.e. for imaginary $\Gamma$-extensions) for two reasons:
	\begin{enumerate}
		\item Proving the function field moment results in the imaginary case corresponds to counting points on substacks of the Hurwitz stacks used in \cite{LWZB} that are not represented by schemes.
		\item It was not known how to construct a random group model for the imaginary case to simulate the behavior of $G_{\O}(K)$. Such a random group model enables us to obtain the desired probability measures and moments that are suitable for the conjectures.
	\end{enumerate}
	In this paper, we focus on these two major obstacles and generalize all results in \cite{LWZB} to the imaginary case.

\subsection{Main results}
	
	Let $\Gamma$ be a finite group and $\Gamma_{\infty}$ a cyclic subgroup of $\Gamma$. For the global field $Q= \Q$ or $\F_q(t)$, fix a separable closure $\overline{Q}$ and a distinguished place of $\overline{Q}$ lying above $\infty$ (when $Q=\Q$, $\infty$ is the unique archimedean place). A \emph{$(\Gamma, \Gamma_{\infty})$-extension of $Q$} is a pair $(K, \iota)$, where $K$ is an extension of $Q$ contained in $\overline{Q}$ and $\iota$ is an isomorphism $\Gal(K/Q) \to \Gamma$ such that $\Gamma_{\infty}$ is the image of both the inertia group and decomposition group of $\Gal(K/Q)$ at the place of $K$ lying below the distinguished prime of $\overline{Q}$ above $\infty$. 
	Let $\rDisc K$ be the norm of the radical of the discriminant ideal $\Disc(K/Q)$.
	Define $E_{\Gamma, \Gamma_{\infty}}(D, Q)$ to be the set of isomorphism classes of all $(\Gamma, \Gamma_{\infty})$-extensions $(K, \iota)$ such that $|\rDisc(K/Q)|=D$.
	
	Let $K_{\O}$ be the maximal unramified extension of $K$, $K_{\O}^{\infty}/K$ the maximal subextension of $K_{\O}/K$ that is split completely at all primes lying above $\infty$, and let $G_{\O}(K):=\Gal(K_{\O}/K)$ and $G_{\O}^{\infty}:=\Gal(K_{\O}^{\infty}/K)$. In particular, $K_{\O}=K_{\O}^{\infty}$ when $Q=\Q$; and $\Cl(K)$ is the abelianization of $G_{\O}^{\infty}(K)$. Define $\Delta_Q:=2|\Gamma|$ if $Q=\Q$ and $\Delta_Q:=q|\Gamma|$ if $Q=\F_q(t)$. Then define $G_{\O}^{\#}(K)$ to be the pro-prime-to-$\Delta_Q$ completion of $G_{\O}^{\infty}(K)$, and define $K_{\O}^{\#}$ to be the extension of $K$ whose Galois group is $G_{\O}^{\#}(K)$. Note that, by the Schur--Zassenhaus theorem, $G_{\O}^{\#}(K)$ obtains a $\Gamma$-action by choosing a section $\Gamma \to \Gal(K_{\O}^{\#}/Q)$ and all choices of such a section lead to isomorphic $\Gamma$-actions. So throughout this paper, $G_{\O}^{\#}(K)$ is considered as a $\Gamma$-group.
	
	In the function field case, for a $\Gamma$-extension $K/\F_q(t)$, the $\omega$-invariant of $K$,
	\[
		\omega_{K}^{\#}: \Zhat(1)_{(q|\Gamma|)'} \longrightarrow H_2(\Gal(K_{\O}^{\#}/\F_q(t)), \Z)_{(q|\Gamma|)'},
	\]
	is the Frobenius-equivariant homomorphism $\omega_{K^{\#}_{\O}/K}$ defined in \cite[Definition~2.13]{Liu-ROU} (note that $\Gal(K_{\O}^{\#}/\F_q(t)) \simeq \Gal(K_{\O}^{\#}\overline{\F}_q/ \overline{\F}_q(t))$ and the Frobenius action acts trivially on the codomain of $\omega_K^{\#}$), where the subscript $(q|\Gamma|)'$ represents the pro-prime-to-$(q|\Gamma|)$ completion. When $Q=\F_q(t)$ with $\gcd(q, |\Gamma|)=1$ and $\Gamma_{\infty}$ is nontrivial, if $q\not\equiv 1 \bmod |\Gamma_{\infty}|$, then $E_{\Gamma, \Gamma_{\infty}}(q^n, \F_q(t))=\O$ because by class field theory there does not exist a totally ramified extension of the local field $\F_q(\!(t)\!)$ of Galois group $\Gamma_{\infty}$. When $q \equiv 1 \bmod |\Gamma_{\infty}|$, we prove the following theorem about the moments of the distribution of $G_{\O}^{\#}(K)$ as $K$ varies over $(\Gamma, \Gamma_{\infty})$-extensions of $\F_q(t)$. Note that $G_{\O}^{\#}(K)$ is always an admissible $\Gamma$-group by \cite[Proposition~2.2]{LWZB} (see \S\ref{sect:recall-presentation} for definition of admissible $\Gamma$-groups), so every $\Gamma$-equivariant quotient of $G_{\O}^{\#}(K)$ is admissible. The following theorem shows that, for an admissible $\Gamma$-group $H$, the \emph{$H$-moment}, that is the average number of the $\Gamma$-equivariant surjections from $G_{\O}^{\#}(K)$ to $H$ that induce a fixed pushforward of the $\omega$-invariant, has limit equal to $1/[H^{\Gamma_{\infty}}: H^{\Gamma}]$, where $H^{\Gamma_{\infty}}$ and $H^{\Gamma}$ are the $\Gamma_{\infty}$-invariant and $\Gamma$-invariant of $H$ respectively.
	
	\begin{theorem}\label{thm:FF-moment}
		Let $\Gamma$ be a finite group and $\Gamma_{\infty}$ a nontrivial cyclic subgroup of $\Gamma$. Let $H$ be a finite admissible $\Gamma$-group. Let $\delta$ be a homomorphism $\Zhat(1)_{(|\Gamma|)'} \to H_2(H \rtimes \Gamma, \Z)_{(|\Gamma|)'}$. Then
		\begin{equation}\label{eq:FF-moment-1}
			\lim_{N \to \infty} \lim_{q \to \infty} \frac{\sum\limits_{ n \leq N} \sum\limits_{K \in E_{\Gamma, \Gamma_{\infty}}(q^n, \F_q(t))} \#\left\{ \pi \in \Sur_{\Gamma} (G_{\O}^{\#}(K), H) \,\big| \, \pi_* \circ \omega_K^{\#}=\delta \right\}}{ \sum\limits_{ n \leq N} \#E_{\Gamma, \Gamma_{\infty}}(q^n, \F_q(t))} = \frac{1}{[H^{\Gamma_{\infty}}: H^{\Gamma}]},
		\end{equation}
		where the second limit runs over all prime powers $q$ such that $\gcd(q, |\Gamma||H|)=1$ and $q\equiv 1 \bmod |\Gamma_{\infty}||\im \delta|$, and $\pi_*$ is the coinflation map $H_2(\Gal(K_{\O}^{\#}/\F_q(t)), \Z)_{(q|\Gamma|)'} \to H_2(H \rtimes \Gamma, \Z)_{(|\Gamma|)'}$ induced by $\pi$. Moreover, there is an integer $C$ depending only on $\Gamma$ and $H$ so that when $q>C$ with $\gcd(q,|\Gamma||H|)=1$ and $q\equiv 1 \bmod |\Gamma_{\infty}||\im \delta|$,
		\begin{equation}\label{eq:FF-moment-2}
			\lim_{N \to \infty}\frac{\sum\limits_{ n \leq N} \sum\limits_{K \in E_{\Gamma, \Gamma_{\infty}}(q^n, \F_q(t))} \#\left\{ \pi \in \Sur_{\Gamma} (G_{\O}^{\#}(K), H) \,\big| \, \pi_* \circ \omega_K^{\#}=\delta \right\}}{ \sum\limits_{ n \leq N} \#E_{\Gamma, \Gamma_{\infty}}(q^n, \F_q(t))} = \frac{1}{[H^{\Gamma_{\infty}}: H^{\Gamma}]}.
		\end{equation}
	\end{theorem}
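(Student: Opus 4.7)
The plan is to realize both the numerator and the denominator of \eqref{eq:FF-moment-1} as $\F_q$-point counts on Hurwitz stacks, compute their connected components, and apply the Grothendieck--Lefschetz trace formula. First, $E_{\Gamma, \Gamma_{\infty}}(q^n, \F_q(t))$ (up to an automorphism correction) is the count of $\F_q$-points of a Hurwitz stack $\Hur^{\Gamma, \Gamma_{\infty}}_n$ classifying connected $\Gamma$-covers of $\PP^1_{\F_q}$ with $n$ branch points in $\A^1$ and a distinguished branch point at $\infty$ whose inertia and decomposition groups are both $\Gamma_{\infty}$. After choosing a section $\Gamma \to H \rtimes \Gamma$, a $\Gamma$-equivariant surjection $\pi \colon G_{\O}^{\#}(K) \twoheadrightarrow H$ with $\pi_* \circ \omega_K^{\#} = \delta$ is the same as an $(H \rtimes \Gamma)$-cover $L/\F_q(t)$ containing $K$, unramified over $K$ and such that every place of $L$ above $\infty$ has decomposition group $\Gamma_{\infty}$ (so that $L/K$ is split there); the $\omega$-condition isolates a union of components of the classifying stack. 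Thus the numerator is the point count of a union $\Hur^{H \rtimes \Gamma, \Gamma_{\infty}, \delta}_n$ of components of $\Hur^{H \rtimes \Gamma, \Gamma_{\infty}}_n$.

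Next I would classify the geometric components of both stacks and the Frobenius action on them. By the general theory developed in \cite{EVW} and extended in \cite{Liu-ROU, LWZB}, in the stable range where $n$ is sufficiently large, geometric components of $\Hur^{G, \Gamma_{\infty}}_n$ are indexed by a combinatorial monodromy datum together with an $\omega$-invariant in an appropriate quotient of $H_2(G, \Z)_{(|\Gamma|)'}$. Under the hypothesis $q \equiv 1 \pmod{|\Gamma_{\infty}| \cdot |\im \delta|}$, Frobenius acts trivially on the cyclotomic torsors that could otherwise permute these components, so every geometric component is $\F_q$-rational. The key computation is to verify
\[
\lim_{n \to \infty} \frac{\#\pi_0(\Hur^{H \rtimes \Gamma, \Gamma_{\infty}, \delta}_{n, \overline{\F}_q})}{\#\pi_0(\Hur^{\Gamma, \Gamma_{\infty}}_{n, \overline{\F}_q})} = \frac{1}{[H^{\Gamma_{\infty}}:H^{\Gamma}]},
\]
where the factor $[H^{\Gamma_{\infty}}:H^{\Gamma}]$ arises as the number of lifts of the local datum at $\infty$ from $\Gamma_{\infty} \subset \Gamma$ to $\Gamma_{\infty} \subset H \rtimes \Gamma$, taken modulo $H$-conjugation inside $H \rtimes \Gamma_{\infty}$ and after quotienting by the conjugation ambiguity already consumed in the classification of $\Gamma$-covers by $\omega$.

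Finally, I apply Grothendieck--Lefschetz on the two Deligne--Mumford stacks. As $q \to \infty$ with $n$ fixed, only degree-zero cohomology contributes at the leading order $q^n$, so the ratio of point counts tends to the ratio of components computed above, yielding \eqref{eq:FF-moment-1} after the Ces\`aro average $n \le N$, $N \to \infty$. For the stronger statement \eqref{eq:FF-moment-2}, I would prove a homological stability theorem for $\Hur^{G, \Gamma_{\infty}}_n$ in the imaginary setting, in parallel with the main result of \cite{EVW} and its extensions in \cite{LWZB}, so that higher cohomology is bounded uniformly in $n$ and contributes an error of order $q^{n-1/2}$ that vanishes after averaging, uniformly in $q > C(\Gamma, H)$. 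The principal obstacle is exactly the first one flagged in the introduction: unlike the totally real condition used in \cite{LWZB}, the imaginary condition at $\infty$ does not rigidify the covers, so $\Hur^{\Gamma, \Gamma_{\infty}}_n$ is genuinely a stack and automorphisms of its objects intervene in the point count. Tracking these automorphisms through both the Grothendieck--Lefschetz count and the homological stability argument, and showing that they cancel precisely to give the clean factor $[H^{\Gamma_{\infty}}:H^{\Gamma}]$ rather than $|H^{\Gamma_{\infty}}|$ or $|H^{\Gamma}|$ in isolation, is the delicate heart of the proof.
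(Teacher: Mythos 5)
Your plan correctly identifies the overall strategy — translating both counts to Hurwitz-stack $\F_q$-point counts, classifying geometric components via lifting invariants, applying the trace formula for the $q\to\infty$ limit, and invoking homological stability for the fixed-$q$ statement — and this is indeed what the paper does. The $[H^{\Gamma_\infty}:H^\Gamma]$ factor does emerge from counting Schur--Zassenhaus splittings $s: \Gamma \hookrightarrow H\rtimes\Gamma$ with $s(\Gamma_\infty)$ the prescribed inertia over $\infty$, essentially as in the paper's Lemma~\ref{lem:Sur-Hur}. However, there are two genuine gaps.

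First, the claim that ``under the hypothesis $q \equiv 1 \pmod{|\Gamma_\infty|\cdot|\im\delta|}$, Frobenius acts trivially on the cyclotomic torsors that could otherwise permute these components, so every geometric component is $\F_q$-rational'' is false. The Frobenius acts on components via the $q^{-1}\ast$ action on $K(G,c)_{n,\geq M}[g_\infty]^{-1}$. Your congruence condition does control the $H_2$ and $[g_\infty]$ parts of the lifting invariant, but the $\Zhat^\times$-action still permutes the $\Z^{c/G}$ coordinate of $U(G,c)$ by $q$-powering on conjugacy classes, which is generally nontrivial; components whose multiplicity vectors differ at conjugacy classes related by $q$-powering can genuinely be permuted. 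The paper's Proposition~\ref{prop:HurPointCount} and the discussion around \eqref{eq:12.7-pi-2} and \eqref{eq:12.7-pi-1-delta} therefore carefully estimate the number of $\Phi_q$-fixed components using the quantities $b(G_i,c_i,q,n)$ and only recovers $1$ as the \emph{ratio} of fixed-component counts (via $b(G_1,c_1,q,n;\delta)=b(G_2,c_2,q,n)$), not because all components are fixed.

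Second, you do not address the technical obstacle that the natural branch-locus map $\Hur_{G,\circ}^n \to \Conf^{n-1}(\A^1)$ is not \'etale (the fiber over $\infty$ is a ramified rather than \'etale cover), which is why the paper passes to the reduced substack $(\Hur_{G,\circ}^n)_{red}$, proves it is \'etale proper quasi-finite over $\Conf^{n-1}(\A^1)$ (Lemma~\ref{lem:varphi-finet}), and verifies $\#\calX(\F_q)=\#\calX_{red}(\F_q)$ (Lemma~\ref{lem:point-counting=red}). Without this step, the comparison between characteristic-$0$ and characteristic-$p$ cohomology (Lemma~\ref{lem:comparison}) and the Deligne-type weight bound in the trace-formula error term do not apply. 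You flag this as ``the delicate heart of the proof'' but propose no mechanism to handle it. Relatedly, for \eqref{eq:FF-moment-2} you propose to ``prove a homological stability theorem'' in the imaginary setting; the paper instead imports the results of Landesman--Levy (\cite{LL-HS}, \cite{LL-CL}) by identifying $(\Hur_{G,G_\infty,c}^n)_{red}$ as the quotient by $G_\infty$ of their pointed Hurwitz scheme $\CHur$ (Lemma~\ref{lem:Comp-CH-H}) — proving such a theorem from scratch is a substantial undertaking that your proposal leaves entirely open.
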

	
	Theorem~\ref{thm:FF-moment} is proved by counting $\F_q$-points on appropriate Hurwitz stacks. The equality~\eqref{eq:FF-moment-1} follows by the Grothendieck--Lefschetz trace formula for algebraic stacks proven by Behrend \cite{Behrend-2}, in which the main term can be estimated by studying the lifting invariants associated to components of Hurwitz stacks defined in \cite{Wood-component}. The equality~\eqref{eq:FF-moment-2} uses the recent work of Landesman and Levy \cite{LL-HS} on the homological stability of the Hurwitz spaces, which gives nice bounds for the error terms in the trace formula so that the limit of $q$ can be removed. When $H$ is a finite abelian group of odd order and $\Gamma=\Z/2\Z$ acts on $H$ as taking inverse, \eqref{eq:FF-moment-2} is proved by Landesman and Levy in \cite[Theorem~1.2.1]{LL-CL}. The Hurwitz spaces that Landensman and Levy used in their work are moduli spaces of covers of stacky curves, and the Hurwitz stacks we studied in this paper are quotient stacks of their Hurwitz spaces (see Lemma~\ref{lem:Comp-CH-H}); the comparison between these two approaches is discussed in \S\ref{ss:proof-FF-2}.
	
	By the analogy between function fields and number fields, and the previous Cohen--Lenstra type of conjectures for imaginary number fields, it is reasonable to conjecture that the moment results proved in Theorem~\ref{thm:FF-moment} should also hold for number fields. The next question is to understand the probability measure of the distribution of $G_{\O}^{\#}(K)$. For real $\Gamma$-extensions, \cite{LWZB} constructs a random $\Gamma$-group so that the moments of the distribution defined by the random group agree with the moments  of the distribution of $G_{\O}^{\#}(K)$ in the function field case. So the probability version of the conjecture in \cite{LWZB} conjectures that $G_{\O}^{\#}(K)$ should equidistribute for the probability measure defined by the random $\Gamma$-group. 
	
	To construct a reasonable random group for the imaginary case, a thorough description of the group structure of $G_{\O}^{\#}(K)$ is necessary. By applying the technique established in \cite{Liu-presentation}, we prove the following theorem about the structure of the pro-$\calC$ completion $G_{\O}^{\#}(K)^{\calC}$ of $G_{\O}^{\#}(K)$ for all $(\Gamma, \Gamma_{\infty})$-extensions $K$ of $Q=\Q$ or $\F_q(t)$, when $K$ does not contain certain roots of unity. See \S\ref{sect:recall-presentation} for the definition of the pro-$\calC$ completions (which is the same as the one used in \cite{LWZB}).

	\begin{theorem}\label{thm:presentation}
		Let $\Gamma$ be a finite group, $Q$ either $\Q$ or $\F_q(t)$ with $\gcd(q,|\Gamma|)=1$. Let $\Gamma_{\infty}$ be a nontrivial cyclic subgroup of $\Gamma$, and furthermore assume $|\Gamma_{\infty}|=2$ when $Q=\Q$. Let $K/Q$ be a $(\Gamma,\Gamma_{\infty})$-extension, and $\calC$ a set of finite $\Gamma$-groups satisfying the conditions:
		\begin{enumerate}[label=(\alph*)]
			\item \label{item:presentation-1} $G_{\O}^{\#}(K)^{\calC}$ is a finitely admissibly generated pro-$\calC$ group (see \S\ref{sect:recall-presentation} for definition of finitely admissibly generated $\Gamma$-groups), and
			\item \label{item:presentation-2} if a prime number $\ell$ divides the order of a group in $\calC$, then $\mu_{\ell} \not\subset K$, $\ell\neq \Char(K)$ and $\ell \nmid |\Gamma|$.
		\end{enumerate}
		Let $\calF_n$ be the free admissible $\Gamma$-group defined in \cite[\S3.1]{LWZB}. Then for sufficiently large integer $n$, there exists a $\Gamma$-equivariant isomorphism 
		\begin{equation}\label{eq:presentation}
			G_{\O}^{\#}(K)^{\calC} \simeq \faktor{\calF_n^{\calC}}{[r_i^{-1} \gamma(r_i)]_{\gamma \in \Gamma, i=1,\ldots, n+1}}
		\end{equation}
		 for some elements $r_1, \ldots, r_{n+1} \in \calF_n^{\calC}$ such that $r_{n+1}$ is fixed by the action of $\Gamma_{\infty}$. Here $[r_i^{-1} \gamma(r_i)]_{\gamma \in \Gamma, i=1,\ldots, n+1}$ denotes the $\Gamma$-closed normal subgroup of $\calF_n^{\calC}$ topologically generated by the elements $r_i^{-1} \gamma(r_i)$.
	\end{theorem}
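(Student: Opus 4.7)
The plan is to apply the $\Gamma$-equivariant presentation machinery of \cite{Liu-presentation} to the full Galois group $\Gal(K_{\O}^{\#}/Q)^{\calC}$, and then to descend to $G_{\O}^{\#}(K)^{\calC}$ via the Schur--Zassenhaus splitting (which is available because hypothesis~\ref{item:presentation-2} forces the orders of groups in $\calC$ to be coprime to $|\Gamma|$). The extra relation $r_{n+1}$, together with its $\Gamma_{\infty}$-fixedness, will come from the archimedean place and encode the split-completely condition there.

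Concretely, let $S$ denote the union of the ramified places of $K/Q$ with the place at infinity. Hypothesis~\ref{item:presentation-2}---that for every prime $\ell$ dividing the order of some group in $\calC$ one has $\mu_{\ell} \not\subset K$, $\ell \neq \Char(K)$, and $\ell \nmid |\Gamma|$---is exactly what is needed to make the local Tate duality vanish at all nonarchimedean places and to make the $\Gamma$-equivariant global Poitou--Tate sequence yield sharp cohomological control. Combined with hypothesis~\ref{item:presentation-1}, this should allow us, for any sufficiently large $n$, to construct a $\Gamma$-equivariant surjection $\pi:\calF_n^{\calC} \twoheadrightarrow G_{\O}^{\#}(K)^{\calC}$ whose $\Gamma$-equivariant relation module has rank at most $n+1$.

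The heart of the argument is then to exhibit specific $r_1,\ldots,r_{n+1} \in \calF_n^{\calC}$ for which $\ker\pi$ is the $\Gamma$-closed normal subgroup generated by $\{r_i^{-1}\gamma(r_i)\}_{\gamma\in\Gamma,\,i}$. Two types of relations appear. First, for each ramified finite prime $v$, the fact that $K_{\O}^{\#}/K$ is unramified above $v$ means that the inertia at $v$ inside $\Gal(K_{\O}^{\#}/Q)^{\calC}$ maps isomorphically onto its image in $\Gamma$, so a suitable lift $r_i$ of an inertia generator must become $\Gamma$-invariant in the quotient, yielding a relation $r_i^{-1}\gamma(r_i) = 1$. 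Second, for each prime of $K_{\O}^{\#}$ above $\infty$, the split-completely condition forces the decomposition group there to equal $\Gamma_{\infty}$, so the corresponding element $r_{n+1}$ is a lift of a decomposition generator at $\infty$ and can therefore be chosen $\Gamma_{\infty}$-fixed inside $\calF_n^{\calC}$ by the equivariance of the construction.

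The hardest step is proving that no further relations beyond these $n+1$ are needed, i.e., that the displayed presentation is tight. This is the $\Gamma$-equivariant analog of the classical ``$r \leq g+1$'' count of Koch--Shafarevich for imaginary global fields, and should follow from the $\Gamma$-equivariant global Euler characteristic formula together with the Poitou--Tate duality package developed in \cite{Liu-presentation}. The assumption $\mu_{\ell} \not\subset K$ is essential: it forces the dual global $H^0$ term in Poitou--Tate to vanish, so that the global $H^2$ is supported entirely on the local $H^2$'s at places of $S$, and hence on the $n+1$ relations just identified. The number field and function field cases are handled uniformly; the restriction $|\Gamma_{\infty}|=2$ in the number field case enters only because that is precisely the condition needed to realize $\Gamma_{\infty}$ as a decomposition group at a real archimedean place of $\Q$.
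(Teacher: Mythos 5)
Your outline identifies the right toolbox---the cohomological presentation framework of \cite{Liu-presentation}, Poitou--Tate duality, the Euler characteristic formula, and the crucial role of $\mu_{\ell}\not\subset K$---but the route you sketch is not the one the paper takes, and as written it has a genuine gap.

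The paper does not construct the relations $r_1,\dots,r_{n+1}$ from arithmetic data (inertia generators at the ramified primes and a decomposition generator at $\infty$). Instead, it proceeds in two steps. First, Proposition~\ref{prop:multiplicity-A} bounds, for each finite irreducible $G_{\O}^{\#}(K)^{\calC}\rtimes\Gamma$-module $A$, the multiplicity $m_{\ad}^{\calC}(n,G_{\O}^{\#}(K)^{\calC},A)$ of $A$ in the relation module of the surjection $\calF_n^{\calC}\twoheadrightarrow G_{\O}^{\#}(K)^{\calC}$ by
$\bigl(n\dim(A/A^{\Gamma})+\dim(A^{\Gamma_{\infty}}/A^{\Gamma})\bigr)/h_{G_{\O}^{\#}(K)^{\calC}\rtimes\Gamma}(A)$.
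The extra $\dim(A^{\Gamma_{\infty}}/A^{\Gamma})$ is where the split-completely-at-$\infty$ condition enters: in the function-field case it comes from the inflation--restriction sequence for $1\to\Gal(K_{\O}/K_{\O}^{\infty})\to G_{\O}(K)\to G_{\O}^{\infty}(K)\to 1$ and the fact that $\Gal(K_{\O}/K_{\O}^{\infty})$ is generated by Frobenii at $\infty$ (Lemma~\ref{lem:G-infty}); in the number-field case it comes from the real place contribution to the Euler--Poincar\'e characteristic. Second, Lemma~\ref{lem:prob-module} shows that if $(r_1,\dots,r_{n+1})$ is chosen uniformly from $R^n\times R^{\Gamma_{\infty}}$ (with $R$ the top abelian layer of $\ker\pi$), the probability that the $\{r_i^{-1}\gamma(r_i)\}$ normally generate $R$ is a product over $A$ of factors $1-\ell^{h(A)k}|A^{\Gamma}|^{n+1}/(|A|^n|A^{\Gamma_{\infty}}|)$; the multiplicity bound makes this product positive, so a choice exists. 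The relations are thus found by a nonconstructive counting argument, not by tracking inertia.

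The concrete gap in your proposal is the claimed dictionary between the $r_i$ ($i\le n$) and ramified finite primes: $n$ is an arbitrary sufficiently large integer, while the set of ramified primes of $K/Q$ is fixed, so no such one-to-one correspondence exists, and even allowing padding you have not shown that inertia generators yield relators of the very specific form $r_i^{-1}\gamma(r_i)$ required by the statement (this form is intrinsic to the $Y$-construction and the admissible-$\Gamma$-group framework of \cite{LWZB}). Similarly, $r_{n+1}$ is not a lift of a decomposition generator at $\infty$ (such a generator sits inside the image of the section $\Gamma\hookrightarrow G_{\O}^{\#}(K)^{\calC}\rtimes\Gamma$, not in $\ker\pi$); it is simply a random $\Gamma_{\infty}$-fixed element of $\ker\pi$, and the surjectivity of $(\ker\pi)^{\Gamma_{\infty}}\to R^{\Gamma_{\infty}}$ (which holds because $|\Gamma_{\infty}|$ is prime to $|\ker\pi|$) is what lets one choose it $\Gamma_{\infty}$-fixed. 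Your high-level intuition---that the single extra relation reflects the archimedean/infinity condition, and that $\mu_{\ell}\not\subset K$ kills a dual $H^0$ term---is correct; but the ``exhibit specific arithmetic relators and prove tightness'' step that you defer to Poitou--Tate is precisely what the multiplicity-plus-probability mechanism replaces, and without it the argument is incomplete.
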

	
	There are two natural families of $\calC$ satisfying the condition in \ref{item:presentation-1} for every $K$: 
	\begin{enumerate}
		\item When $\calC$ is finite, $G_{\O}^{\#}(K)^{\calC}$ is a finite group for any global field $K$ by \cite[Theorem~6.4]{Liu-presentation}
		\item When we fix a prime number $p$ that is relatively prime to $|\Gamma|$ and $\Char{K}$ and we let $\calC$ be the set of all finite $\Gamma$-groups that are $p$-groups, $G_{\O}^{\#}(K)^{\calC}$ is the $p$-class tower group of $K$, which is known to be finitely presented.
	\end{enumerate}
	In the totally real case, \cite{Liu-presentation} proves that, when $\calC$ is a finite set satisfying \ref{item:presentation-2} in Theorem~\ref{thm:presentation},  for any $(\Gamma, 1)$-extension $K/Q$, the presentation \eqref{eq:presentation} holds for sufficiently large $n$ and some elements $r_1, \ldots, r_{n+1} \in \calF_n^{\calC}$. So Theorem~\ref{thm:presentation} also holds when $\Gamma_{\infty}=1$.
	
	We define a random $\Gamma$-group
	\[
		 \faktor{\calF_n}{[x_i^{-1} \gamma(x_i)]_{\gamma \in \Gamma, i=1, \ldots, n+1}}
	\]
	as $n \to \infty$, where $x_i$, $i=1, \ldots, n$ are chosen randomly with respect to the Haar measure of $\calF_n$ and $x_{n+1}$ is chosen randomly with respect to the Haar measure of $(\calF_n)^{\Gamma_{\infty}}$.  We input this random $\Gamma$-group into the framework in \cite{LWZB} to compute the probability measure and moments defined by it. In particular, we show that the $H$-moment of this random group equals $1/[H^{\Gamma_{\infty}}: H^{\Gamma}]$, which agrees with the function field moments computed in Theorem~\ref{thm:FF-moment} in the case that the base field $Q$ does not contain certain roots of unity. We make the following conjecture.
	
	\begin{conjecture}[see Conjecture~\ref{conj:main} for the complete statement]
		Let $Q=\Q$ or $\F_q(t)$, and $\mu_Q$ be the group of roots of unity contained in $Q$. As $K$ varies over $(\Gamma, \Gamma_{\infty})$-extensions of $Q$, the distribution of the pro-prime-to-$|\mu_Q|$ completion of $G_{\O}^{\#}(K)$ is predicted by the probability measure $\mu_{\Gamma, \Gamma_{\infty}}$ defined by the random group above (see Theorem~\ref{thm:mu} for the explicit formula). Also, for any admissible $\Gamma$-group $H$ such that $\gcd(|H|, |\mu_Q|)=1$ and $\Char(Q) \nmid |H|$, the $H$-moment of the distribution of $G_{\O}^{\#}(K)$ is $1/[H^{\Gamma_{\infty}}: H^{\Gamma}]$.
	\end{conjecture}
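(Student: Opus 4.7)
The plan is to split the conjecture into a \emph{random-group} half --- computing the moments and probability measure attached to the random $\Gamma$-group defined just above --- and a \emph{matching} half that compares these to the arithmetic distribution of $G_{\O}^{\#}(K)$. For function fields, the matching half is supplied by Theorem~\ref{thm:FF-moment} together with a moment-uniqueness argument, so the function-field case of the conjecture should be within reach. The number-field case, by contrast, is a prediction in the spirit of the original Cohen--Lenstra conjecture and cannot be established by these methods; the best supporting evidence is the matching of moments across the two settings.

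The first substantive step is the explicit calculation for the random group
\[
R_n:=\faktor{\calF_n}{[x_i^{-1}\gamma(x_i)]_{\gamma\in\Gamma,\,i=1,\ldots,n+1}}
\]
with $x_1,\ldots,x_n$ Haar-distributed on $\calF_n$ and $x_{n+1}$ Haar-distributed on $(\calF_n)^{\Gamma_{\infty}}$. For any finite admissible $\Gamma$-group $H$ and any $\Gamma$-equivariant surjection $\pi:\calF_n\twoheadrightarrow H$, the event that $\pi$ descends to $R_n$ is the event that $\pi(x_i)\in H^{\Gamma}$ for every $i$. For $i\le n$ this happens with probability $|H^{\Gamma}|/|H|$, and for $i=n+1$ with probability $|H^{\Gamma}|/|H^{\Gamma_{\infty}}|$, since $\pi(x_{n+1})$ is a priori uniform on $H^{\Gamma_{\infty}}$. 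Inserting the $\omega$-invariant refinement via the same Schur-multiplier adjustment used in \cite{LWZB}, summing over $\pi$, and taking the limit $n\to\infty$ by the Fourier-type inversion from \cite{LWZB} should yield both the explicit formula for $\mu_{\Gamma,\Gamma_{\infty}}$ in Theorem~\ref{thm:mu} and the $H$-moment $1/[H^{\Gamma_{\infty}}:H^{\Gamma}]$. The denominator $[H^{\Gamma_{\infty}}:H^{\Gamma}]$ --- in contrast to the real case of \cite{LWZB} --- is precisely the contribution of the additional $\Gamma_{\infty}$-fixed relator at infinity.

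To complete the function-field case of the conjecture, I would combine the moment equality from Theorem~\ref{thm:FF-moment} with the moment-uniqueness results of \cite{LWZB}, after verifying that the $\omega$-refined moments satisfy the required decay hypotheses on admissible $\Gamma$-groups. A subsidiary technical point is that the pro-prime-to-$|\mu_Q|$ restriction is essential: only after inverting primes dividing $|\mu_Q|$ does Theorem~\ref{thm:presentation} produce a presentation whose relator at infinity is a single $\Gamma_{\infty}$-fixed element, matching the relator structure of $R_n$; at primes $\ell\mid|\mu_Q|$ the presentation acquires extra torsion that the random-group model does not see. The hard part, and the reason the statement is left as a conjecture rather than a theorem, is the number-field case: establishing the analogous moment formula unconditionally over $\Q$ would require input comparable to unconditional Cohen--Lenstra for imaginary quadratic fields, which is beyond current techniques.
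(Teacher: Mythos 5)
The boxed statement is a conjecture, not a theorem; the paper does not prove it, and neither does your proposal. What you have written is a reasonably faithful description of the \emph{supporting evidence} that the paper assembles, and on that level your account is largely accurate: the random-group moment computation you sketch is exactly what Theorem~\ref{thm:mu}\eqref{item:mu-4} does (the event that a fixed $\Gamma$-equivariant $\pi\colon\calF_n\twoheadrightarrow H$ descends to the quotient is $\pi(x_i)\in H^\Gamma$ for all $i$, giving $(|H^\Gamma|/|H|)^n\cdot|H^\Gamma|/|H^{\Gamma_\infty}|$ per surjection and, in the $n\to\infty$ limit, $1/[H^{\Gamma_\infty}:H^\Gamma]$), and you correctly identify the role of Theorem~\ref{thm:presentation} in justifying the extra $\Gamma_\infty$-fixed relator and the $|\mu_Q|$-coprimality hypothesis.

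However, the claim that the function-field case of the conjecture ``should be within reach'' by combining Theorem~\ref{thm:FF-moment} with a moment-uniqueness argument is a genuine overclaim, and it misreads what Theorem~\ref{thm:FF-moment} actually establishes. Equation~\eqref{eq:FF-moment-2} holds only for $q>C(\Gamma,H)$ where $C$ depends on $H$; for any \emph{fixed} $q$, only finitely many $H$ have their moments pinned down, whereas a moment-uniqueness theorem such as \cite{Sawin20} requires \emph{all} $H$-moments. Moreover, even the moment statement for fixed $q$ (not just the probability version) is explicitly left as a conjecture in the full Conjecture~\ref{conj:main} (Moment Version). So moment-uniqueness cannot close the loop for function fields; the gap is not merely in passing from function fields to number fields.

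Two smaller inaccuracies. First, the $\omega$-invariant refinement you mention has no counterpart in the random-group moment computation: the refined $\omega$-moments appear on the Hurwitz-stack side (Theorem~\ref{thm:FF-moment}) and are what produce the extra factor $H_2(H\rtimes\Gamma,\Z)_{(|\Gamma|)'}[q-1]$ in the unrestricted function-field moment of Conjecture~\ref{conj:main}, which your write-up omits; the random group has no such invariant, and the match with the FF moment occurs only after imposing $\gcd(|H|,|\mu_Q|)=1$, which kills that factor. Second, you describe $\pi(x_{n+1})$ as ``a priori uniform on $H^{\Gamma_\infty}$'' without justifying surjectivity of $(\calF_n^{\odd})^{\Gamma_\infty}\to H^{\Gamma_\infty}$; this holds because $|\Gamma_\infty|$ is coprime to $|\calF_n^{\odd}|$ so taking $\Gamma_\infty$-invariants is exact, and the paper makes analogous points explicit in the proof of Theorem~\ref{thm:presentation}.
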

	
\subsection{Methods and outline of the paper}

	We prove Theorem~\ref{thm:FF-moment} in \S\ref{sect:Hurwitz} - \S\ref{sect:FF-moment}, by studying the Hurwitz stacks that parametrize the covers of $\PP^1$ satisfying desired conditions. We first recall the definition of a Hurwitz stack $\Hur_{G,1}^n$ defined in \cite{LWZB}, which parametrizes $G$-covers of $\PP^1$ ramified at ``$n$ points'' together with a marked point lying above $\infty$. In the totally real case studied in \cite{LWZB}, the points of interest on $\Hur_{G,1}^n$ are points in a dense open substack $\Hur_{G,\ast}^n$ that is represented by a scheme. In contrast, in the totally imaginary case, we need to study the points on $\Hur_{G,1}^n$ that correspond to covers in which the marked point ramifies (i.e. points corresponding to imaginary covers). We define $\Hur_{G,\circ}^n$ to be the closed substack of $\Hur_{G,1}^n$ of points corresponding to the imaginary covers, and prove in Lemma~\ref{lem:Sur-Hur} that the moments in Theorem~\ref{thm:FF-moment} are related to counting $\F_q$-points on some components of $\Hur_{G,\circ}^n$. For some technical reasons, we study the reduced substack of $\Hur_{G,\circ}^n$, denoted by $(\Hur_{G,\circ}^n)_{red}$, because the branch locus map 
	$(\Hur_{G,\circ}^n)_{red} \to \Conf^{n-1} (\A^1)$
	sending an imaginary cover to its branch points in $\A^1$ is quasi-finite and \'etale. By Behrend's trace formula for algebraic stacks, to count the number of $\F_q$-points on $(\Hur_{G,\circ}^n)_{red}$ (which is the same as the number of $\F_q$-points on $\Hur_{G,\circ}^n$ by Lemma~\ref{lem:point-counting=red}), we need to estimate the number of Frobenius-fixed connected components of $((\Hur_{G,\circ}^n)_{red})_{\overline{\F}_q}$. In \S\ref{sect:Hurwitz}, we study the properties of the reduced stack $(\Hur_{G,\circ}^n)_{red}$, and describe the Schur-type of lifting invariant for components of these Hurwitz stacks. In \S\ref{sect:trace-formula}, we use the lifting invariants to count the number of Frobenius-fixed connected components and apply Behrend's trace formula to compute the number of $\F_q$-points of $(\Hur_{G,\circ}^n)_{red}$. In \S\ref{sect:FF-moment}, we give the proof of Theorem~\ref{thm:FF-moment}. In \S\ref{ss:proof-FF-1}, we prove \eqref{eq:FF-moment-1} using the method in \cite{LWZB}. Landesman and Levy \cite{LL-HS} proved the homological stability for the pointed Hurwitz space $\CHur_{n,B}^{G,c}$, which is a scheme parametrizing covers of the root stack of $\PP^1$ at the point $\infty$. We show that the pointed Hurwitz space in their work is a cover of our Hurwitz stack $\Hur_{G,\circ}^n$, discuss how their results can be applied to bound the error terms in the trace formula, and prove the stronger moment result \eqref{eq:FF-moment-2} in \S\ref{ss:proof-FF-2}. 
	
	In \S\ref{sect:presentation}, we give the proof of Theorem~\ref{thm:presentation}, which builds on the first author's work \cite{Liu-presentation}. By the assumption that $G_{\O}^{\#}(K)^{\calC}$ is finitely admissibly generated, there exists a $\Gamma$-equivariant surjection $\pi: \calF_n^{\calC} \to G_{\O}^{\#}(K)^{\calC}$ when $n$ is sufficiently large. To prove that $G_{\O}^{\#}(K)^{\calC}$ admits a particular type of presentation as desired in Theorem~\ref{thm:presentation}, one need to show that, for any finite irreducible $G_{\O}^{\#}(K)^{\calC} \rtimes \Gamma$-module $A$, the multiplicity of $A$ in $\ker \pi$ (which is, roughly speaking, the number of $A$ appearing as quotients of $\ker \pi$; see \S\ref{sect:recall-presentation} for details) admits a particular upper bound, where the bound depends on the form of the desired presentation. The work \cite{Liu-presentation} establishes techniques of computing these multiplicities by formulas in terms of the Galois cohomology $H^i(G_{\O}^{\#}(K), A)^{\Gamma}$ for $i=1,2$. Then by estimating the Galois cohomology groups, we prove the presentation in Theorem~\ref{thm:presentation}. 
	
	In \S\ref{sect:random-group}, we build the random $\Gamma$-group $X_{\Gamma, \Gamma_{\infty}}$, and list in Theorem~\ref{thm:mu} the formulas of the probability measure and the moments of the distribution of $X_{\Gamma, \Gamma_{\infty}}$ obtained by applying the machinery of \cite{LWZB} to our random group. Finally, in \S\ref{sect:conjecture}, we state the complete version of our conjecture, Conjecture~\ref{conj:main}, and show that they agree with the previous Cohen--Lenstra type of conjectures for imaginary fields, which include: the original Cohen--Lenstra conjecture \cite{Cohen-Lenstra}, the Friedman-Washington conjecture \cite{Friedman-Washington}, the conjectures for the root-of-unity case given by Lipnowski, Sawin and Tsimerman \cite{Lipnowski-Tsimerman, Lipnowski-Sawin-Tsimerman}, the Boston--Bush--Hajir conjecture \cite{BBH-imaginary}, and the Cohen--Lenstra--Martinet conjecture \cite{Cohen-Martinet, Wang-Wood}. Orthogonally to the main focus of this paper, we show in \S\ref{ss:Gerth} that our function field moment computation can be applied to prove a weighted version of the moment conjecture related to Gerth's conjecture.
	
\subsection{Notation}
	
	In this paper, groups are always finite groups or profinite groups, subgroups are topologically closed subgroups, and homomorphisms of profinite groups are always continuous homomorphisms. For a group $G$, write $G^{\ab}$ for the abelianization of $G$. A $G$-group is a profinite group with a continuous action of $G$. We write $\Hom_G$, $\Sur_G$, and $\Aut_G$ to represent the sets of $G$-equivariant homomorphisms, surjections, and automorphisms. If two $G$-groups $H_1$ and $H_2$ are $G$-equivariant isomorphic, then we write $H_1 \simeq_{G} H_2$. If $H$ is a $G$-group, $H^G$ is the $G$-invariants of $H$. For a field $k$, we write $\overline{k}$ for a fixed separable closure of $k$, and write $G_k:=\Gal(\overline{k}/k)$. For an $\F_p[G]$-module $M$, we write $h_G(M):=\dim_{\F_p} \Hom_G(M,M)$. Given a group $G$ and an integer $n$, we let $G_{(n)'}$ be the pro-prime-to-$n$ completion of $G$, i.e. the inverse limit of all quotients whose order is prime to $n$. For a profinite group $G$, if the order of every finite quotient of $G$ is relatively prime to $n$, then we say \emph{$G$ has order prime to $n$}. A \emph{$p$-$G$-group} is a $G$-group that is pro-$p$. A $(n)'$-$G$-group is a $G$-group and has order prime to $n$.
	
	Throughout the paper, we let $Q$ be either $\Q$ or $\F_q(t)$, and let $\Gamma$ be a finite group. For a function $f(x,y)$ of two variables $x$ and $y$, if $\lim_{x \to \infty}\limsup_{y \to \infty} f(x,y)= \lim_{x \to \infty} \liminf_{y \to \infty} f(x,y)=C$, then we write $\lim_{x \to \infty} \lim_{y \to \infty} f(x,y) =C$.

\subsection*{Acknowledgements} 
	We thank David Zureick-Brown for helpful conversations regarding Hurwitz stacks, and thank Aaron Landesman for explaining to us his work with Levy and for careful comments on an early draft. We also thank Shizhang Li, Will Sawin, and Melanie Matchett Wood for helpful conversations regarding the work in this paper. The first author was partially supported by NSF grant DMS-2200541. The second author was particially supported by UIUC Campus Research Board award RB23063 and the GAANN Fellowship.

\section{Hurwitz stacks}\label{sect:Hurwitz}

	In this section, we define the Hurwitz stacks that are related to the function field moments in the imaginary case, and define a component invariant for our Hurwitz stacks which generalizes the lifting invariants previously used to study the Hurwitz spaces in the real case. Some results in \S\ref{ss:alg-stack}, \S\ref{ss:top-stack} and \S\ref{sect:trace-formula} can be proved by observing that our reduced Hurwitz stacks are quotient stacks of the pointed Hurwitz schemes $\CHur$ defined in the work of Landesman and Levy (see Lemma~\ref{lem:Comp-CH-H}) and using the properties of $\CHur$ proved in \cite{LL-CL, LL-HS}. We keep our stacky approach and proofs in order to make this paper self-contained.
	
\subsection{Algebraic Hurwitz stacks}\label{ss:alg-stack}

	We first recall the definition of Hurwitz stacks $\Hur_G^n$ and $\Hur_{G,1}^n$ defined in \cite[\S11]{LWZB}. For a finite group $G$ and a positive integer $n$, $\Hur_G^n$ is the fibered category of connected tame Galois covers of $\PP^1$ with $n$ branch points (which means, the branched divisor is a degree-$n$ scheme), together with a choice of identification of $G$ with the automorphism group of the cover; 
    the stack $\Hur_{G,1}^n$ is the fibered category of tame Galois $G$-covers with $n$ branch points together with a choice of a point over $\infty$.  
    In other words, for a scheme $S$, an object of $\Hur_G^n(S)$ is a pair $(f, \iota)$, where $f:X \to \PP^1_S$ is a tame Galois cover and $\iota:G \to \Aut f$ is an isomorphism, such that degree of each geometric fiber of the branch locus $D(\subset \PP^1) \to S$ of $f$ is equal to $n$ . An object of $\Hur_{G,1}^n(S)$ is a tuple $(f, \iota; P)$ where $(f, \iota)\in \Hur_G^n(S)$ and $P \in X(S)$ is a point lying over $\infty$. (See \cite[\S11]{LWZB} for more details about definition of Hurwitz stacks.) We recall a lemma from \cite{LWZB} below, which is stated in a way that suits our needs.
	
	\begin{lemma}[Lemma~11.2 of \cite{LWZB}]\label{lem:LWZB11.2}
		The forgetful map
				\[
					\phi_G^n: \Hur_{G,1}^n \to \Hur_G^n
				\]
		is proper, quasi-finite and representable (by schemes).
		The stack $\Hur_{G,1}^n$ is a separated locally Noetherian Deligne--Mumford stack of finite type over $\Spec \Z$. 
	\end{lemma}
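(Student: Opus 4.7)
The plan is to describe the fiber of $\phi_G^n$ very explicitly as the scheme of sections over $\infty$, and then read off every claim from the finiteness of that fiber. Given an object $(f \colon X \to \PP^1_S, \iota) \in \Hur_G^n(S)$, a lift of it to $\Hur_{G,1}^n(S)$ is exactly an $S$-point $P \colon S \to X$ with $f \circ P = \infty_S \colon S \to \PP^1_S$. Such $S$-points are parametrized by the closed subscheme $Y := X \times_{\PP^1_S, \infty_S} S$ of $X$; more generally, for any morphism $T \to \Hur_G^n$ corresponding to $(f_T, \iota_T)$, the base change $T \times_{\Hur_G^n} \Hur_{G,1}^n$ is represented by the closed subscheme $X_T \times_{\PP^1_T, \infty_T} T \subset X_T$. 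This alone gives representability by schemes.

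Next I would extract the geometric properties of $\phi_G^n$ from properties of the fiber $Y \to S$. Since a tame Galois cover $f \colon X \to \PP^1_S$ is by definition finite and locally free of rank $|G|$, its base change $Y \to S$ along the closed immersion $\infty_S \into \PP^1_S$ is finite (and locally free of rank $|G|$); in particular $Y \to S$ is quasi-finite and proper. Because representable morphisms inherit properties from their fibers, $\phi_G^n$ is quasi-finite and proper (indeed finite), and hence automatically separated and of finite type.

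Now I would pass these properties to the total stack. The stack $\Hur_G^n$ is known to be a separated Deligne--Mumford stack of finite type over $\Spec\Z$ by standard results on tame Galois covers of curves (e.g.\ via Wewers' or Bertin--Romagny's moduli of tame admissible $G$-covers, cited in \cite{LWZB}), so I may assume this for free. A morphism that is representable by schemes and quasi-finite between stacks has Deligne--Mumford source whenever the target is Deligne--Mumford; properness of $\phi_G^n$ and separatedness of $\Hur_G^n$ give that $\Hur_{G,1}^n$ is separated; finiteness of $\phi_G^n$ together with $\Hur_G^n$ being of finite type over $\Spec\Z$ gives that $\Hur_{G,1}^n$ is of finite type over $\Spec\Z$; and any such stack is locally Noetherian.

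The only genuinely non-formal point in this strategy is making sure that the fiber description really is correct at ramified $\infty$: one must check that a marked point over $\infty$ is the same datum as a section of $Y \to S$, including in the ramified case where $Y$ may be non-reduced. This is where I would be most careful, verifying that for tame covers the scheme-theoretic fiber $X \times_{\PP^1_S,\infty_S} S$ really represents the functor of lifts of $\infty_S$ to $X$, even when $\infty$ sits in the branch locus. Once this is in hand, everything else in the statement is a formal consequence, and I would phrase the argument so that the imaginary/ramified locus studied later (the substack $\Hur_{G,\circ}^n$) fits inside the framework without additional work.
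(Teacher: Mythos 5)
Your proposal is correct and follows essentially the same route as the paper: the first claim is exactly the fiber computation you give (the base change of $\phi_G^n$ along a scheme $T\to\Hur_G^n$ is the finite $T$-scheme $X_T\times_{\PP^1_T,\,\infty_T}T$), which is what the cited \cite[Lemma~11.2]{LWZB} establishes, and the second claim then follows from permanence properties as you describe. Your worry about the ramified, possibly non-reduced fiber is resolved by the definition itself, since the marked point $P\in X(S)$ in an object of $\Hur_{G,1}^n(S)$ is a scheme-theoretic section over $\infty$, so the functor of lifts is literally represented by $X\times_{\PP^1_S,\,\infty_S}S$.
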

	
	\begin{proof}
		The first claim is shown in the proof of \cite[Lemma~11.2]{LWZB}. Then $\phi_G^n$ is DM by \cite[050E(3)]{SP}.
		Since $\Hur_G^n$ is a separated locally Noetherian Deligne-Mumford stack of finite type over $\Spec\Z$ by \cite[Lemma~11.1]{LWZB}, the second claim in the lemma follows by \cite[050L, 06FT, 06R6]{SP}. 
	\end{proof}
	
	In \cite{LWZB}, $\Hur_{G,*}^n$ is defined to be the open substack of $\Hur_{G,1}^n$ such that the marked point is unramified, which is a dense open subscheme of $\Hur^n_{G,1}$ \cite[Remark 11.3 and Proposition 11.4]{LWZB}; and the real case of the nonabelian Cohen--Lenstra moment naturally relates to counting points on $\Hur_{G,*}^n$. For the imaginary case, we need to count the points on $\Hur_{G,1}^n$ that correspond to the covers of $\PP^1$ in which the marked point over $\infty$ is ramified. 
    For $C$ being one of the curves $\A^1$ or $\PP^1$, let $\Conf^n(C)$ be the configuration space, which is defined to be the quotient $((C_{\Z})^n-\Delta_C^n)/S_n$, where $\Delta_C^n$ is the big diagonal of $C^n$. Then there is a branch locus map $\psi_{G,1}^n:\Hur_{G,1}^n \to \Conf^n(\PP^1)$ sending $(f,\iota; P)$ to the branch locus of $f$.
    We define a substack $\Hur_{G, \circ}^n$ of $\Hur_{G,1}^n$ to be the fiber product
	\begin{equation}\label{eq:def-circ}
	\begin{tikzcd}
		\Hur_{G, \circ}^n \arrow{r} \arrow["\psi_{G, \circ}^n"]{d} & \Hur_{G,1}^n \arrow["\psi_{G,1}^n"]{d} \\
		\Conf^{n-1}(\A^1) \arrow{r} & \Conf^n (\PP^1)
	\end{tikzcd}
	\end{equation}
	where the lower map is a closed map sending the equivalence class of $(x_1, \ldots, x_{n-1}) \in (\A^1)^{n-1} \backslash \Delta^{n-1}_{\A^1}$ to the equivalence class of $(x_1, \ldots, x_{n-1}, \infty)\in (\PP^1)^n \backslash \Delta^n_{\PP^1}$. The stack $\Hur_{G,\circ}^n$ is not a scheme when $G$ is not center-free and $n$ is sufficiently large, because if $g\in G$ is some nontrivial element in the center of $G$ and $f:X \to \PP^1$ is a cover such that the $g$ is contained in the inertia subgroup at the chosen point $P$, then $g$ gives a nontrivial automorphism of the point $(f, \iota; P)$ on $\Hur_{G, \circ}^n$.
 
     Note that the branch locus map $\Hur_G^n \to \Conf^n(\PP^1)$ is \'etale after base changed to $\Spec(\Z[|G|]^{-1})$ by \cite[Theorem~11.1]{LWZB}. Given an $S$-point $S \to \Hur_G^n$ corresponding to the data $(f, \iota)$ such that $f$ is ramified at $\infty$, the fiber product $S \times_{\Hur_G^n} \Hur_{G,\circ}^n$ is isomorphic to $f^{-1}(\infty):=S \times_{\PP_S^1} X$ where $S \to \PP^1_S$ is the point $\infty$, which is not \'etale over $S$. So, $\psi_{G,\circ}^n$ is not \'etale, and instead of $\Hur_{G,\circ}^n$, we study the reduced substack of $\Hur_{G,\circ}^n$, which is denoted by $(\Hur_{G,\circ}^n)_{red}$ (see \cite[050C]{SP} for definition of the reduced substack).
     
     	\begin{lemma}\label{lem:fiber-red}
		Let $f: \calX \to \calY$ be a smooth morphism of algebraic stacks. Then $\calX_{red}= \calX \times_{\calY} \calY_{red}$. 
	\end{lemma}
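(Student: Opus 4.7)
The plan is to verify that $\calX \times_{\calY} \calY_{red}$ satisfies the defining universal property of $\calX_{red}$, namely that it is a reduced closed substack of $\calX$ with the same underlying topological space as $\calX$. Once these two properties are checked, uniqueness of the reduced substack (\cite[050C]{SP}) forces the equality.

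First I would check that $\calX \times_{\calY} \calY_{red} \to \calX$ is a closed immersion which is a homeomorphism on underlying topological spaces. By definition $\calY_{red} \hookrightarrow \calY$ is a closed immersion, and closed immersions are stable under arbitrary base change, so the projection $\calX \times_{\calY} \calY_{red} \to \calX$ is a closed immersion. Moreover $|\calY_{red}| \to |\calY|$ is a bijection (it is a homeomorphism), and since $|{-}|$ commutes with fiber products of algebraic stacks up to surjection on points, the base change $|\calX \times_{\calY} \calY_{red}| \to |\calX|$ is a bijection as well. Thus the closed immersion is a homeomorphism on underlying spaces.

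Second I would show that $\calX \times_{\calY} \calY_{red}$ is reduced. Since $f$ is smooth, its base change $\calX \times_{\calY} \calY_{red} \to \calY_{red}$ is smooth. Now $\calY_{red}$ is reduced by construction, and the key input is that smooth morphisms preserve reducedness of the target: if $g : U \to V$ is a smooth morphism of schemes and $V$ is reduced, then $U$ is reduced (smoothness is flat with geometrically regular fibers, so one applies the criterion that a flat morphism with reduced fibers from a reduced base has reduced source). To transfer this to stacks, choose a smooth surjection $V \to \calY_{red}$ from a reduced scheme $V$ (possible since $\calY_{red}$ is reduced) and then a smooth surjection $U \to V \times_{\calY_{red}} (\calX \times_{\calY} \calY_{red})$ from a scheme $U$; the composition $U \to V$ is smooth, so $U$ is reduced, which by definition means $\calX \times_{\calY} \calY_{red}$ is reduced.

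Combining the two steps, $\calX \times_{\calY} \calY_{red}$ is a reduced closed substack of $\calX$ with $|\calX \times_{\calY} \calY_{red}| = |\calX|$, hence equals $\calX_{red}$. The only genuinely nontrivial point is the preservation of reducedness under a smooth morphism, but this is a standard result and the stacky version follows immediately by passage to smooth covers.
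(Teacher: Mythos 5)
Your proposal is correct and takes essentially the same route as the paper: both pass to a smooth cover of $\calY_{red}$ by a reduced scheme, observe the base change $\calX\times_\calY\calY_{red}\to\calY_{red}$ is smooth, and invoke that smooth morphisms from schemes to reduced stacks have reduced source (the paper cites \cite[04YF]{SP} for this, you invoke the scheme-level statement directly). The paper phrases it slightly more compactly by checking reducedness of an arbitrary scheme $U$ mapping smoothly to $\calX\times_\calY\calY_{red}$, but the content is identical to your factorization through $V\times_{\calY_{red}}(\calX\times_\calY\calY_{red})$.
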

	
	\begin{proof}
		Let $\widetilde{\calX}$ be $\calX \times_{\calY} \calY_{red}$. Then $\widetilde{\calX} \to \calX$ is a surjective closed immersion because the same is true of $\calY_{red} \to \calY$. Also, since $\calX \to \calY$ is smooth, $\widetilde{\calX} \to \calY_{red}$ is smooth. For any scheme $U$ with a smooth morphism $U \to \widetilde{\calX}$, the composite map $U \to \widetilde{\calX} \to \calY_{red}$ is smooth. Because reduced algebraic stacks are local in smooth topology \cite[04YF]{SP}, $\calY_{red}$ being reduced implies that $U$ is reduced, for any $U$. 
		So $\widetilde{\calX}$ is reduced, and hence $\calX_{red}=\widetilde{\calX}$.
	\end{proof}

	\begin{lemma}\label{lem:varphi-finet}
		The restriction of $\psi_{G,\circ}^n$ to $(\Hur_{G, \circ}^n)_{red}$ 
		\[
			\varphi_{G, \circ}^n: (\Hur_{G, \circ}^n)_{red} \to \Conf^{n-1}(\A^1)
		\]
		is \'etale proper quasi-finite after base change to $\Z[|G|^{-1}]$. In particular, $((\Hur_{G, \circ}^n)_{red})_{\Z[|G|^{-1}]}$ is a smooth Deligne--Mumford stack of relative dimension $n-1$ over $\Spec(\Z[|G|^{-1}])$.
	\end{lemma}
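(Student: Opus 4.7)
The plan is to factor $\varphi_{G,\circ}^n$ through the closed substack $\Hur_{G,\infty}^n := \Hur_G^n \times_{\Conf^n(\PP^1)} \Conf^{n-1}(\A^1)$ of $\Hur_G^n$ consisting of covers branched at $\infty$, and then to analyze the local structure of $(\Hur_{G,\circ}^n)_{red} \to \Hur_{G,\infty}^n$ using the Abhyankar-type structure theorem for tame Galois covers at a ramified section.

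For properness and quasi-finiteness, observe that $\phi_G^n: \Hur_{G,1}^n \to \Hur_G^n$ is proper, quasi-finite, and representable by Lemma~\ref{lem:LWZB11.2}, while $\psi_G^n: \Hur_G^n \to \Conf^n(\PP^1)$ is finite \'etale after base change to $\Z[|G|^{-1}]$ by Theorem~11.1 of \cite{LWZB}. Hence $\psi_{G,1}^n = \psi_G^n \circ \phi_G^n$ is proper and quasi-finite. Base changing along the closed immersion $\Conf^{n-1}(\A^1) \hookrightarrow \Conf^n(\PP^1)$ then shows $\psi_{G,\circ}^n$ is proper and quasi-finite, and these properties descend to the closed substack $(\Hur_{G,\circ}^n)_{red}$.

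The main step is \'etaleness. Pulling back $\psi_G^n$ along the closed immersion of configuration spaces shows $\Hur_{G,\infty}^n \to \Conf^{n-1}(\A^1)$ is \'etale, so over $\Z[|G|^{-1}]$ the stack $\Hur_{G,\infty}^n$ is smooth, hence reduced. Next, $\Hur_{G,\circ}^n \to \Hur_{G,\infty}^n$ is the pullback of the universal cover $\calX \to \PP^1_{\Hur_G^n}$ along the section $\infty$; by the structure theorem for tame Galois covers, around any point of $\Hur_{G,\infty}^n$ with inertia of order $e$ at $\infty$, and after passing to a sufficiently fine \'etale neighborhood $V' \to V \subset \Hur_{G,\infty}^n$ containing the $e$-th roots of unity and trivializing the monodromy on $f^{-1}(\infty)$, the pullback of $\calX$ along $\infty$ becomes a disjoint union of $|G|/e$ copies of $\Spec(\calO_{V'}[t]/(t^e))$. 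Taking reduction removes the nilpotents, producing a trivial \'etale cover $\coprod \Spec(\calO_{V'}) \to V'$; Lemma~\ref{lem:fiber-red} ensures that this reduction is compatible with the smooth base change $V' \to V$. Descent then yields that $(\Hur_{G,\circ}^n)_{red} \to \Hur_{G,\infty}^n$ is \'etale, and composing with the \'etale map to $\Conf^{n-1}(\A^1)$ completes the proof that $\varphi_{G,\circ}^n$ is \'etale.

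The remaining assertions follow formally: $\Conf^{n-1}(\A^1)$ is smooth of relative dimension $n-1$ over $\Spec \Z$, so the \'etale map $\varphi_{G,\circ}^n$ makes $((\Hur_{G,\circ}^n)_{red})_{\Z[|G|^{-1}]}$ smooth of relative dimension $n-1$, while its DM property is inherited from the DM stack $\Hur_{G,1}^n$ via the closed immersion $(\Hur_{G,\circ}^n)_{red} \hookrightarrow \Hur_{G,1}^n$ (Lemma~\ref{lem:LWZB11.2}). The principal obstacle is justifying the tame local structure in the relative setting, including choosing \'etale neighborhoods on $\Hur_{G,\infty}^n$ that trivialize the monodromy on the non-reduced fiber of $\calX$ over $\infty$ and correctly handling the nilpotent directions; Lemma~\ref{lem:fiber-red} is the essential tool for the latter.
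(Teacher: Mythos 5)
Your proof is correct and follows essentially the same route as the paper: both factor $\varphi_{G,\circ}^n$ through the intermediate stack $\Hur_G^n \times_{\Conf^n(\PP^1)} \Conf^{n-1}(\A^1)$ (which the paper calls $\calZ$), reduce properness and quasi-finiteness to the corresponding facts for $\phi_G^n$ and $\psi_G^n$, and establish étaleness by passing to a regular étale chart, applying an Abhyankar-type local structure result for tame covers at the ramified section, and using Lemma~\ref{lem:fiber-red} to identify the reduced fiber with the fiber of the reduced stack. The only difference is cosmetic: you spell out the local tame structure explicitly (disjoint copies of $\Spec \calO_{V'}[t]/(t^e)$ over an étale neighborhood trivializing the monodromy), whereas the paper delegates this step to the citation of \cite[Lemma~2.3.1(2)]{BBCL}.
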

	\begin{proof}
		All stacks and schemes in the proof are defined over $\Z[|G|^{-1}]$.
		Let $\calZ$ be the fiber product $\Hur_G^n \times_{\Conf^n(\PP^1)} \Conf^{n-1}(\A^1)$ defined by $\psi_G^n: \Hur_G^n \to \Conf^n(\PP^1)$ and $h: \Conf^{n-1}(\A^1) \to \Conf^n(\PP^1)$. The vertical arrows in \eqref{eq:def-circ} factor through $\calZ$ and $\Hur_G^n$ as described in the following diagram in which the upper square, lower square and the total rectangle are all cartesian by the pasting law for pullbacks.
		\begin{equation}\label{eq:diag-circ}\begin{tikzcd}
			\Hur_{G, \circ}^n \arrow["f"]{r} \arrow["\phi_{G,\circ}^n"]{d} \arrow[bend right=60, "\psi_{G,\circ}^n"']{dd} & \Hur_{G,1}^n \arrow["\phi_G^n"]{d} \arrow[bend left=60, "\psi_{G,1}^n"]{dd} \\
			\calZ \arrow["g"]{r} \arrow["\alpha"]{d} & \Hur_G^n \arrow["\psi_G^n"]{d} \\
			\Conf^{n-1}(\A^1) \arrow["h"]{r} & \Conf^n(\PP^1)
		\end{tikzcd}\end{equation}
		Then $\psi_{G,\circ}^n: \Hur_{G,\circ}^n \to \Conf^{n-1}(\A^1)$ is proper quasi-finite because the same is true of $\phi_G^n$ (Lemma~\ref{lem:LWZB11.2}) and $\psi_G^n$ (\cite[Theorem~11.1]{LWZB}).
		Because $(\Hur_{G,\circ}^n)_{red}$ is a closed substack of $\Hur_{G,\circ}^n$ with the same underlying topological space, the morphism $(\Hur_{G,\circ}^n)_{red} \to \Hur_{G,\circ}^n$ is also proper quasi-finite \cite[0CL8, 0G2M, 0CL1, 06PU]{SP}. Then it follows that $\varphi_{G,\circ}^n$ is proper quasi-finite.

		Because $h$ is a closed immersion, $f$ and $g$ are also closed immersions. So $\Hur_{G,\circ}^n$ is a separated Deligne--Mumford stack of finite type over $\Spec \Z[|G|^{-1}]$ since the same is true of $\Hur_{G,1}^n$ \cite[06MY]{SP}; and similarly $\calZ$ is a smooth separated Deligne--Mumford stack of finite type over $\Spec \Z[|G|^{-1}]$. Let $x$ be a geometric point of $\calZ$ and $i:S\to\calZ$ an \'etale neighborhood of $x$ such that $S$ is a regular Noetherian scheme.
Define $D$ to be the pullback $S\times_{\calZ} \Hur_{G,\circ}^n$ of $S$. The morphism $i :S \to \Hur_G^n$ corresponds to the data $(f,\iota)$ such that the covering map $f: X\to \PP^1$ is defined over $S$ and ramified at $\infty$; and the pullback $D$ is isomorphic to the pullback $f^{-1}(\infty):=S \times_{\PP^1_S} X$ of $\infty: S \to \PP_S^1$.
		By \cite[Lemma~2.3.1(2)]{BBCL} (whose proof uses Abhyankar's lemma \cite[0EYH]{SP}), the reduced closed subscheme $D_{red}$ is finite \'etale over $S$; and by Lemma~\ref{lem:fiber-red}, $D_{red}$ is the pullback of $S$ along $(\Hur_{G,\circ}^n)_{red} \to \Hur_{G,\circ}^n \to \calZ$. It follows that $(\Hur_{G, \circ}^n)_{red} \to \calZ$ is \'etale on an \'etale neighborhood of $x$ for every geometric point $x$, and hence $(\Hur_{G,\circ}^n)_{red} \to \calZ$ is \'etale. Because $\psi_G^n$ is \'etale \cite[Theorem~11.1]{LWZB}, $\alpha$ is \'etale and hence $\varphi_{G,\circ}^n$ is \'etale. Then the proof of the first sentence of the lemma is completed. 
		
		Note that the reduced stack $(\Hur_{G,\circ}^n)_{red}$ is a closed substack of $\Hur_{G,1}^n$. Since $\Hur_{G,1}^n$ is Deligne-Mumford, so is $(\Hur_{G,\circ}^n)_{red}$. Finally, because $\Conf^{n-1}(\A^1)$ is smooth of dimension $n-1$ and $\phi_{G,\circ}^n$ is \'etale and quasi-finite, it follows that $(\Hur_{G,\circ}^n)_{red}$ is smooth of dimension $n-1$.
\end{proof}

\subsection{Analytic Hurwitz stacks and their components}\label{ss:top-stack}

	Define $\Hur_{G,\circ}^{n, \an}$ to be the analytic Hurwitz stack such that an object in the fiber $\Hur_{G,\circ}^{n,\an}(S)$ over an analytic space $S$ is a tuple $(f: X \to \PP^1_S, \iota: G \simeq \Aut f; P \in f^{-1}(\infty))$ where $X$ is an analytic space and $f$ is an analytic morphism such that $\infty$ is a branch locus of $f$. Because $\Hur_{G,1}^n$ is Deligne--Mumford and $\Hur_{G,\circ}^n$ is a closed substack of $\Hur_{G,1}^n$, $\Hur_{G,\circ}^n$ is a Deligne--Mumford stack; so by the argument in \cite[Section~11.2]{LWZB}, there is an equivalance of analytic stacks $(\Hur_{G,\circ}^n)_{\an} \to \Hur_{G,\circ}^{n,\an}$, where $(\Hur_{G,\circ}^n)_{\an}$ is the analytification of $(\Hur_{G,\circ}^n)_{\C}$. By the GAGA theorem for algebraic stacks (for example, by \cite[Proposition~A.4]{Hall11}), the canonical map $H^0(\Hur_{G,\circ}^n, \C) \to H^0((\Hur_{G,\circ}^n)_{\an}, \C)$ is an isomorphism. 
	So the connected components of $(\Hur_{G,\circ}^n)_{\C}$ are one-one corresponding to the connected components of $(\Hur_{G,\circ}^n)_{\an}$, and hence are one-one corresponding to the connected components of $\Hur_{G,\circ}^{n,\an}$.
	
	The coarse moduli space of $\Hur_{G,\circ}^{n,\an}$ can be described as follows (similar to the arguments in \cite[Sections~3.2 and 3.3]{Romagny-Wewers} and \cite[Section 11.3]{LWZB}). As a set, define $\Hur_{G, \circ}^{n, \Top}:=\Hur_{G,\circ}^{n, \an}(\C)$. 
	We endow the set $\Hur_{G, \circ}^{n, \Top}$ with a topology as follows (we use the notation $\Hur_{G, \circ}^{n, \Top}$ to represent a manifold, not the topological stack in the usual sense). Fix a point $(f, \iota; P) \in \Hur_{G,\circ}^{n, \Top}$, and let $D= \{t_1, \ldots, t_{n-1}\} \subset \A^1(\C)$ be the branch loci of $f$ away from $\infty$. Choose a distinguished point $\widehat{P}$ above $\infty$ in the compactification of the universal cover of $\A^1(\C)-D$. Then $\iota$ and $P$ determines a surjection $\rho_f: \pi_1(\A^1(\C)-D) \to G$ such that its corresponding covering map sends $\widehat{P}$ to $P$, and this homomorphism is unique up to the conjugation action on $G$ by elements in the inertia subgroup $\calT_{\infty}(P)\subset \Aut f \overset{\iota^{-1}}{\to} G$ at $P$ of the cover $f$. 
	Let $\underline{C}=\{C_1, \ldots, C_{n-1} \} \subset \A^1(\C)$, where the $C_i$'s are the disjoint disk-like neighborhoods of the points $t_i$. Let $\calU(\underline{C})$ be the subset of $\Conf^{n-1}(\A^1)(\C)$ consisting of $D'=\{t'_1,\ldots, t'_{n-1}\}$ such that $t'_i \in C_i$. We define $\calH((f,\iota;P), \underline{C})$ to be the subset of $\Hur_{G, \circ}^{n, \Top}$ of $(f', \iota'; P')$ such that 1) its associated branch loci $D'$ is an element of $\calU(\underline{C})$, and 2) $\rho_{f'}$ agrees with $\rho_f$, up to the conjugation action on $G$ by $\calT_{\infty}(P)$, under the natural identification
	$\pi_1(\A^1(\C)-D) \simeq \pi_1(\A^1(\C)-(\cup_i C_i)) \simeq \pi_1(\A^1(\C) - D')$.
	We take $\calH((f, \iota;P), \underline{C})$ to be a basis of open neighborhoods of the point $(f, \iota; P)$. Then using the argument in the proof of \cite[Proposition~3.2]{Romagny-Wewers},  one can show that the topological space $\Hur_{G, \circ}^{n, \Top}$ is a complex manifold and the map
	\[
		\Psi: \Hur_{G, \circ}^{n,\Top} \longrightarrow \Conf^{n-1}(\A^1(\C))
	\]
	sending $(f, \iota; P)$ to its associated $D$ as defined above is a locally biholomorphic covering projection. Moreover, applying the argument in \cite[Proposition~3.4 and Theorem~3.5]{Romagny-Wewers}, one sees that there is a natural map $\Hur_{G, \circ}^{n, \an} \to \Hur_{G, \circ}^{n, \Top}$ of analytic spaces, and this map identifies $\Hur_{G, \circ}^{n, \Top}$ with the coarse moduli space of the functor $S \mapsto \Hur_{G, \circ}^{n, \an}(S)$. In particular, the connected components of $\Hur_{G, \circ}^{n, \an}$ are one-one corresponding to the connected components of $\Hur_{G, \circ}^{n, \Top}$.
	
	Therefore, the connected components of $(\Hur_{G, \circ}^n)_{\C}$, which are one-one corresponding to the connected components of $\Hur_{G, \circ}^{n, \Top}$, can be identified with the $\pi_1(\Conf^{n-1}(\A^1(\C)), D)$-orbits of $\Psi^{-1}(D)$, for any $D \in \Conf^{n-1}(\A^1(\C))$. For a fixed $D$, pick $x \in \A^1(\C)-D$, and then the fundamental group of $\A^1(\C)-D$ has the presentation
	\[
		\pi_1(\A^1(\C)-D, x)\simeq \langle \gamma_1, \ldots, \gamma_{n-1}, \gamma_{\infty} \mid \gamma_1\cdots \gamma_{n-1}\gamma_{\infty}=1 \rangle,
	\]
	where $\gamma_i$ (resp, $\gamma_{\infty}$) is represented by a simple closed loop winding around a point in $D$  (resp. the point $\infty$). 
	For any nontrivial element $g_{\infty} \in G$, define
	\[
		\calE_{G, \circ}^n(g_{\infty}):={\{(g_1, \ldots, g_{n-1}) \mid g_i\in G \backslash \{1\}, G=\langle g_1, \ldots, g_{n-1}, g_{\infty} \rangle, \prod_{i=1}^{n-1} g_i =g_{\infty}^{-1}\}},
	\]
	which is, in other words, the set of all surjections $\pi^1(\A^1(\C)-D)\to G$ such that each $\gamma_i$ is mapped to some nontrivial element and $\gamma_{\infty}$ is mapped to $g_{\infty}$. From the discussion in the preceding paragraph, we see that each point $(f, \iota; P)\in\Hur_{G, \circ}^{n, \Top}$ defines a unique surjection $\rho: \pi^1(\A^1(\C)-D) \to G$ up to conjugation by $\calT_{\infty}(P)$; on the other hand, given the surjection $\rho$, one can recover each coordinate of $(f, \iota; P)$. 
	So, $\Psi^{-1}(D)$ can be identified with the set 
	\[
		\coprod_{g_{\infty} \in G\backslash \{1\}}\faktor{\calE_{G, \circ}^n(g_{\infty})}{\langle g_{\infty} \rangle},
	\]
	where the group $\langle g_{\infty}\rangle$ acts on $\calE_{G, \circ}^n(g_{\infty})$ by simultaneous conjugation. 
	
	The fundamental group of $\pi_1(\Conf^{n-1}(\A^1(\C)), D)$ is the braid group
	\[
		B_{n-1}:= \langle \sigma_1, \ldots, \sigma_{n-2} \mid \sigma_i \sigma_{i+1}\sigma_i = \sigma_{i+1} \sigma_i \sigma_{i+1} (\text{for }1 \leq i \leq n-3), \,  \sigma_i \sigma_j =\sigma_j \sigma_i  (\text{for }i-j \geq 2) \rangle.
	\]
	The monodromy action of $\pi_1(\Conf^{n-1}(\A^1(\C)), D)$ on $\Psi^{-1}(D)$ can be described using the braid action on $G^{n-1}$: $\sigma_i$ acts on $\calE_{G, \circ}^n(g_{\infty})$ as
	\begin{equation}\label{eq:braid-action}
		\sigma_i(g_1, \ldots, g_{n-1}) = (g_1, \ldots, g_{i-1}, g_i g_{i+1} g_i^{-1}, g_i, g_{i+2}, \ldots, g_{n-1}),
	\end{equation}
	which obviously commutes with the conjugate-by-$g_{\infty}$ action, so it defines the $\sigma_i$-action on $\Psi^{-1}(D)$.
	
	We summarize this section by the following lemma.
	
	\begin{lemma}\label{lem:C-component}
		There is a bijective correspondence between the set of connected components of $(\Hur_{G, \circ}^n)_{\C}$ and the set 
		\[
			\coprod_{g_{\infty} \in G\backslash \{1\}} \faktor{\calE_{G, \circ}^n(g_{\infty})}{\langle g_{\infty} \rangle \times B_{n-1}}
		\]
		where $g_{\infty}$ acts on $\calE_{G,\circ}^n(g_{\infty})$ by simultaneous conjugation and the $B_{n-1}$-action is described in \eqref{eq:braid-action}.
	\end{lemma}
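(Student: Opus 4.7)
The plan is to assemble the ingredients already set up in the discussion preceding the statement. First I would invoke the chain of equivalences established above: by GAGA for Deligne--Mumford stacks, the connected components of $(\Hur_{G,\circ}^n)_{\C}$ are in bijection with those of the analytification $(\Hur_{G,\circ}^n)_{\an}$; by the equivalence $(\Hur_{G,\circ}^n)_{\an}\simeq\Hur_{G,\circ}^{n,\an}$ (argued as in \cite[\S11.2]{LWZB}), they are in bijection with the components of $\Hur_{G,\circ}^{n,\an}$; and because $\Hur_{G,\circ}^{n,\Top}$ is the coarse moduli space of $S\mapsto \Hur_{G,\circ}^{n,\an}(S)$, they are in bijection with the components of the complex manifold $\Hur_{G,\circ}^{n,\Top}$.

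Next, fix a base point $D\in\Conf^{n-1}(\A^1(\C))$. Since the branch locus map $\Psi\colon \Hur_{G,\circ}^{n,\Top}\to\Conf^{n-1}(\A^1(\C))$ is a locally biholomorphic covering projection and $\Conf^{n-1}(\A^1(\C))$ is connected, the components of $\Hur_{G,\circ}^{n,\Top}$ are in canonical bijection with the orbits of the monodromy action of $\pi_1(\Conf^{n-1}(\A^1(\C)), D)=B_{n-1}$ on the fiber $\Psi^{-1}(D)$. The explicit identification
\[
  \Psi^{-1}(D)\;\cong\;\coprod_{g_{\infty}\in G\setminus\{1\}} \faktor{\calE_{G,\circ}^n(g_{\infty})}{\langle g_{\infty}\rangle}
\]
has already been established, as has the formula \eqref{eq:braid-action} for the braid action lifted to each $\calE_{G,\circ}^n(g_{\infty})$.

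Finally I would check the two routine verifications needed to pass to the double quotient on each summand. First, for every $g_{\infty}\in G\setminus\{1\}$ the braid action preserves the subset $\calE_{G,\circ}^n(g_{\infty})$: the formula \eqref{eq:braid-action} clearly leaves the product $\prod_i g_i=g_{\infty}^{-1}$ unchanged, does not introduce the identity into any coordinate (since conjugation is an automorphism and $\{1\}$ is conjugation-invariant), and preserves the subgroup generated by the tuple together with $g_{\infty}$. Second, the $B_{n-1}$-action commutes with the simultaneous conjugation action of $\langle g_{\infty}\rangle$: conjugating every entry by a fixed element $h\in G$ is an automorphism of $G^{n-1}$ that intertwines each braid operation $\sigma_i$ with itself, since the formula in \eqref{eq:braid-action} is built purely out of products and conjugations. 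Consequently, the product group $\langle g_{\infty}\rangle\times B_{n-1}$ acts on $\calE_{G,\circ}^n(g_{\infty})$, and the $B_{n-1}$-orbits on the quotient $\calE_{G,\circ}^n(g_{\infty})/\langle g_{\infty}\rangle$ are precisely the $\langle g_{\infty}\rangle\times B_{n-1}$-orbits on $\calE_{G,\circ}^n(g_{\infty})$, yielding the stated bijection.

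I do not anticipate a serious obstacle here: the substantive work has been done in the construction of $\Hur_{G,\circ}^{n,\Top}$ and the identification of $\Psi^{-1}(D)$ in the preceding paragraphs, and the proof reduces to two short compatibility checks together with a standard application of the covering-space/monodromy dictionary.
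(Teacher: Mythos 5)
Your proposal is correct and follows essentially the same approach as the paper, which presents this lemma as a direct summary of the preceding discussion (GAGA, the covering map $\Psi$, the identification of $\Psi^{-1}(D)$, and the braid action). The two compatibility checks you spell out at the end are exactly what the paper alludes to with its phrase that \eqref{eq:braid-action} ``obviously commutes with the conjugate-by-$g_{\infty}$ action''; making them explicit is a small but reasonable elaboration rather than a different route.
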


\subsection{Lifting invariants for geometric points of $\Hur_{G, \circ}^n$}\label{sect:lifting-inv}
	
	In this section, we will describe a lifting invariant defined for geometric points of $\Hur_{G, 1}^n$. This invariant is defined by Wood \cite{Wood-component}, based on the idea due to Ellenberg, Venkatesh and Westerland. In \cite{Wood-component}, most of the discussion about this lifting invariant is within the regime in which the geometric points correspond to covers of $\PP^1$ such that $\infty$ is not a branch point (i.e. the points are on $\Hur_{G,\ast}^n$). 
	Since we are only interested in points in $\Hur_{G, \circ}^n$, we will define and study this invariant for the $\infty$-branched covers, we will point out the difference between the $\infty$-branched case and the $\infty$-unbranched case, and we will state the $\infty$-branched analogues of the results in \cite{Wood-component} with necessary explanations and proofs.
	
	Let $c$ be a subset of $G \backslash \{1\}$ that generates $G$ and is closed under conjugation by elements of $G$ and taking invertible powers (e.g., if $g \in c$ and $\gcd(m, \ord(g))=1$, then $g^m \in c$). We recall the definition of the group $U(G,c)$ associated to the given $G$ and $c$ defined in \cite{Wood-component, LWZB}. First, the \emph{Schur multiplier of $G$} is the homology group $H_2(G, \Z)$, and a \emph{Schur covering group of $G$} is a central extension $\pi: S \twoheadrightarrow G$ such that its induced map $\tau_{\pi}: H_2(G, \Z) \to \ker(S \to G)$ (obtained by the universal coefficients theorem) gives an isomorphism. Given a Schur covering $\pi: S \to G$ and $c$ as described above, define $S_c$ to be the quotient of $S$ by the subgroup generated by all $[x, y]$ where $x, y \in S$ such that $\pi(x) \in c$ and $[\pi(x),\pi(y)]=1$; then $\pi$ induces a central extension $\pi_c: S_c \twoheadrightarrow G$, which is called a \emph{reduced Schur covering of $G$ and $c$}. The \emph{reduced Schur multiplier associated to $G$ and $c$}, denoted by $H_2(G,c)$, is defined to be the quotient of $H_2(G,\Z)$ by the preimage of $\ker (S \to S_c)$ under the isomorphism $\tau_{\pi}$. Note that the reduced Schur multiplier does not depend on the choice of the Schur covering map $\pi$. 
	Then the group $U(G,c)$ is defined to be the fiber product
	\begin{equation}\label{eq:def-U}
		U(G, c) := S_c \times_{G^{\ab}} \Z^{c/G}
	\end{equation}
	where the morphism $S_c \to G^{\ab}$ is the composition of $\pi_c$ and the abelianization $G \to G^{\ab}$, $c/G$ is the set of all conjugacy classes in $c$, $\Z^{c/G}$ is the free abelian group generated by the elements in $c/G$, and the morphism $\Z^{c/G} \to G^{\ab}$ sends the generator of $\Z^{c/G}$ corresponding to the class of $x$ in $c/G$ to the image of $x$ in $G^{\ab}$. Note that the definition of $U(G,c)$ in \eqref{eq:def-U} depends on the choice of the Schur covering $\pi$, but \cite{Wood-component} proves that $U(G,c)$ obtained by different choices of $S_c$ are isomorphic, so we leave that dependence on $S_c$ implicit. For every element in $c$, we pick a lift in $U(G,c)$ as follows. In each conjugacy class $\gamma$ in $c/G$, we pick an element $x_{\gamma}$ in the class $\gamma$, and a preimage $\widehat{x_{\gamma}}$ of $x_{\gamma}$ in $S_c$. Then for any other $y \in c$ such that $y=gx_{\gamma}g^{-1}$ for some $g \in G$, let $\widehat{y}:=\widetilde{g} \widehat{x_{\gamma}} \widetilde{g}^{-1}$ for any choice of preimage $\widetilde{g}$ of $g$ in $S_c$ (one can check that $\widehat{y}$ does not depend on the choice of $\widetilde{g}$). Then for each $x\in c$, define 
	\begin{equation}\label{eq:bracketx}
		[x]:=(\widehat{x}, e_x) \in U(G,c),
	\end{equation}
	where $e_x$ is the generator of $\Z^{c/G}$ corresponding to the conjugacy class of $x$.
	There is a natural surjection $U(G, c) \to G$ with kernel $\ker \pi_c \times \ker(\Z^{c/G} \to G^{\ab}) \simeq H_2(G, c) \times \Z^{c/G}$, and we define 
	\[
		K(G,c):= \ker \left(U(G,c) \longrightarrow G \right).
	\]
	There is a $\Zhat^{\times}$-action on $\Z^{c/G}$: for $\alpha \in \Zhat^{\times}$, $\alpha$ sends the generator of $\Z^{c/G}$ at $x\in c/G$ to the generator at $x^{\alpha}$. 
	Define $\frakS^{c,G}$ to be the set of $\Zhat^{\times}$-orbits of elements of $\Z^{c/G}$. There is a natural map between sets $U(G,c) \to \frakS^{c,G}$ defined as the composition of the natural projection $U(G,c)\to \Z^{c/G}$ and the map $\Z^{c/G}\to \frakS^{c,G}$.
	
	Let $k$ be an algebraically closed field $k$ with $\Char(k) \nmid |G|$. Define $\Zhat_k:= \varprojlim_m \Z/m\Z$ and $\Zhat(1)_k:=\varprojlim_m \mu_m(k)$ where both of the inverse limit are taken over all $m$ with $\Char(k) \nmid m$. There is an action of $\Zhat_k^{\times}$ on $U(G,c)$ that is denoted by $\ast$ as defined in \cite[\S4]{Wood-component} (this action is not a group homomorphism).
	Let $x$ be a point in $\Hur_{G, \circ}^n(k)$ that is represented by $(f: X \to \PP^1_k, \iota; P)$. Let $t_1, \cdots, t_{n-1}, t_{\infty}=\infty \in \PP^1(k)$ be all the branch points of $f$, and $U:= \A^1_k \backslash \{t_1, \ldots, t_{n-1}\}$. Let $\pi'_1(U)$ be the prime-to-$\Char(k)$ completion of the \'etale fundamental group $\pi_1^{\et}(U)$. Then by the Grothendieck's comparison of \'etale and topological fundamental groups, $\pi'_1(U)$ can be presented as
	\begin{equation}\label{eq:pres-pi1}
		\pi'_1(U)\cong \langle \gamma_1, \gamma_2, \ldots, \gamma_{n-1}, \gamma_{\infty} \mid \gamma_1 \cdots \gamma_{n-1} \gamma_{\infty}=1 \rangle
	\end{equation}
	where each of $\gamma_i$ and $\gamma_{\infty}$ is a topological generator of an inertia group at $t_i$ and $\infty$ respectively. By the arguments in \cite[\S5.1, \S5.2]{Wood-component}, for each $i=1, \ldots, n-1, \infty$, there is a canonical isomorphism between $\Zhat(1)_k$ and each tame inertia group at $t_i$, which gives a canonical conjugacy class of homomorphisms 
	\[
		r_{t_i}: \Zhat(1)_k \to \pi'_1(U);
	\]
	and moreover, letting $\zeta_i \in \Zhat(1)_k$ such that $r_{t_i}(\zeta_i)= \gamma_i$, the elements $\zeta_i$ are equal for all $i$. Write $\underline{\gamma}$ for $\gamma_1, \ldots, \gamma_{n-1}, \gamma_{\infty}$ as in \eqref{eq:pres-pi1} and let $I(\underline{\gamma}) \in \Zhat(1)_k$ be this common value of $\zeta_i$ associated to $\gamma_i$ as discussed above.
	The data $f$, $\iota$ and $P$ of the $k$-point $x$ give a surjection $ \varphi_x: \pi'_1(U) \to G$ sending $\gamma_{\infty}$ into the inertia group at $P$, and this surjection $\varphi_x$ is unique up to conjugation by $G_{\infty}$. When $\varphi_x$ maps each $\gamma_i$ (for $i=1, \ldots, n-1, \infty$) into $c$, we define a $\Zhat_k^{\times}$-equivariant map between sets $\Zhat(1)_k^{\times} \to U(G,c)$, which is called the \emph{lifting invariant associated to $x$}. 
	
	\begin{definition}
		Let $G$ be a finite group, $c$ a subset of $G \backslash\{1\}$ that generates $G$ and is closed under conjugation and taking invertible powers, and $k$ an algebraically closed field such that $\Char(k) \nmid |G|$. Given a $k$-point $x$ in $\Hur_{G, \circ}^n(k)$, let $\varphi_x$ be as defined in the preceding paragraph. Assume $\varphi_x(\gamma_i) \in c$ for all $i=1, \ldots, n-1,\infty$.
		\begin{enumerate}
			\item The \emph{lifting invariant} associated to $G, c, x$ is the $\Zhat_k^{\times}$-equivariant map between sets
				\begin{eqnarray*}
					\mathfrak{z}_x: \Zhat(1)_k^{\times} &\longrightarrow& U(G,c) \\
					I(\underline{\gamma}) &\longmapsto& [\varphi_x(\gamma_1)]\cdots[\varphi_x(\gamma_{n-1})].
				\end{eqnarray*}
				where $[\varphi_x(\gamma_i)]$ is the element of $U(G,c)$ defined in \eqref{eq:bracketx}.
				
			\item The \emph{shape invariant} associated to $G,c,x$ is the map between sets
			\[
				\Zhat(1)_k^{\times} \overset{\frakz_x}{\longrightarrow} U(G,c) \xrightarrow{\cdot [\varphi_x(\gamma_{\infty})]} U(G,c)\longrightarrow \frakS^{c,G},
			\]
			where the second arrow is multiplication by the element $[\varphi_x(\gamma_{\infty})]$ and the last arrow is the composition of the projection $U(G,c) \to \Z^{c/G}$ and the natural map $\Z^{c/G}\to \frakS^{c,G}$ sending an element of $\Z^{c/G}$ to its $\Zhat^{\times}$-orbit.
		\end{enumerate}
	\end{definition}
	
	\begin{remark}
		The lifting invariant $\frakz_x$ does not depend on the ordering of the branch points $t_1, \ldots, t_{n-1}$ and the choice of the presentation \eqref{eq:pres-pi1} \cite[Theorem~5.2]{Wood-component}, and one can check that $\zeta_x$ does no depend on the choices of $\varphi_x$.
	\end{remark}
	
	\begin{lemma}\label{lem:shape-inv}
		Let $G$ be a finite group. All geometric points in the same component of $(\Hur_{G, \circ}^n)_{\Z[|G|^{-1}]}$ have the same shape invariant with image in $\frakS^{G\backslash\{1\},G}$.  Moreover, given a geometric point, the data $(f, \iota; P)$ associated to that point define a \emph{distinguished inertia subgroup} $G_{\infty}$ of $G$, which is the inertia subgroup at the chosen point $P$ above $\infty$. All geometric points in the same component of $(\Hur_{G, \circ}^n)_{\Z[|G|^{-1}]}$ give the same distinguished inertia subgroup $G_{\infty}$.
	\end{lemma}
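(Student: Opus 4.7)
\emph{Proof plan.} The plan is to reduce the invariance statement to geometric points over $\C$ and then exploit the combinatorial description of components furnished by Lemma~\ref{lem:C-component}. Since $((\Hur_{G,\circ}^n)_{red})_{\Z[|G|^{-1}]}$ is smooth over $\Spec\Z[|G|^{-1}]$ by Lemma~\ref{lem:varphi-finet}, every connected component dominates the base and hence meets every geometric fiber of residue characteristic coprime to $|G|$. Grothendieck's specialization theorem for tame fundamental groups allows one to transport a topological generating tuple $(\gamma_1,\ldots,\gamma_{n-1},\gamma_\infty)$ of $\pi_1'(U)$ along a geometric path inside such a component, identifying the underlying image tuple $(\varphi_x(\gamma_1),\ldots,\varphi_x(\gamma_{n-1}),\varphi_x(\gamma_\infty))\in G^n$ across specializations up to the conjugation ambiguity already built into the definition. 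Since both invariants at stake depend only on this tuple, it is enough to prove that they are constant on the components of $(\Hur_{G,\circ}^n)_{\C}$.

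By Lemma~\ref{lem:C-component}, the connected components of $(\Hur_{G,\circ}^n)_{\C}$ are indexed by the $\langle g_\infty\rangle\times B_{n-1}$-orbits in $\calE_{G,\circ}^n(g_\infty)$ for $g_\infty$ ranging over $G\setminus\{1\}$. In particular the value $g_\infty=\varphi_x(\gamma_\infty)$ is itself a component label, which immediately gives the claim about the distinguished inertia subgroup $G_\infty=\langle g_\infty\rangle\subset G$.

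For the shape invariant take $c=G\setminus\{1\}$ and unwind the definition: its value on any $I(\underline{\gamma})\in\Zhat(1)_k^\times$ is the class in $\frakS^{c,G}$ of the image in $\Z^{c/G}$ of $[g_1]\cdots[g_{n-1}]\cdot[g_\infty]\in U(G,c)$, which is the vector $\sum_{i=1}^{n-1} e_{[g_i]}+e_{[g_\infty]}$ with $e_{[g]}$ the basis element indexed by the $G$-conjugacy class of $g$. A braid generator $\sigma_i$ replaces $g_{i+1}$ by the $G$-conjugate $g_ig_{i+1}g_i^{-1}$ and swaps it with $g_i$, while simultaneous conjugation by $g_\infty$ replaces each $g_i$ by a $G$-conjugate. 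Neither operation alters the multiset of conjugacy classes $\{[g_1],\ldots,[g_{n-1}],[g_\infty]\}$, so the vector in $\Z^{c/G}$ is preserved on the nose by the full $\langle g_\infty\rangle\times B_{n-1}$-action; in particular its class in $\frakS^{c,G}$ is, and the shape invariant map is constant on each component.

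The main technical obstacle is the cross-characteristic step: one must justify that the tame generating tuple $(\gamma_1,\ldots,\gamma_{n-1},\gamma_\infty)$ in characteristic $p\nmid|G|$ can be matched with a characteristic-zero tuple along a connecting trait so that the image tuple in $G$ agrees. Under the hypothesis $\Char(k)\nmid|G|$ the inertia at every branch point of the cover, including the one at $\infty$, is tame, and this matching is a standard consequence of the specialization theorem for tame $\pi_1$ applied to a smooth geometric neighborhood inside the component, essentially as carried out for the $\infty$-unbranched case in \cite[\S5]{Wood-component}; the additional factor $[\varphi_x(\gamma_\infty)]$ in our setting is handled in the same way and presents no new difficulty.
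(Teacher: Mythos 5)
Your plan is sound and reaches the same conclusion, but it is a genuinely different route from the paper's, and it is worth noting both the difference and one small inaccuracy.

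The paper argues directly over $\Z[|G|^{-1}]$: it covers $\Hur_{G,\circ}^n$ by integral schemes $S$, compares the generic geometric point of $S$ to an arbitrary geometric point via strict localizations, and (i) for the shape invariant, invokes the argument of \cite[Theorem~6.1]{Wood-component} (specialization of tame $\pi_1$); (ii) for the distinguished inertia subgroup, gives a separate, purely algebraic argument using the fact that the reduced branch divisor $Z=f^{-1}(D)_{\red}$ is finite \'etale over $S$, so that the $G$-stabilizer of the marked component of $Z\times_S \overline{S}(\bar s_2)$ is literally the same subgroup for both specializations. You instead pass to $\C$, read off both invariants from the braid-orbit description of Lemma~\ref{lem:C-component} (where $g_\infty$ is part of the component label and the $\langle g_\infty\rangle\times B_{n-1}$-action manifestly preserves the multiset of conjugacy classes $\{[g_1],\ldots,[g_{n-1}],[g_\infty]\}$), and then transport to arbitrary characteristic by the tame specialization theorem. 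This is a valid alternative; your combinatorial unwinding of the shape invariant is in fact more transparent than the paper's bare citation. What the paper's route buys is that it never leaves the algebraic category and handles the inertia-subgroup claim without any reference to $\C$ or to specialization of $\pi_1$ at all; your route handles both claims uniformly but puts all the weight on the cross-characteristic step.

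One inaccuracy: smoothness over $\Spec\Z[|G|^{-1}]$ does \emph{not} by itself imply that a connected component meets every geometric fiber (a smooth morphism is open, so the image is dense, but it need not be surjective). What actually guarantees this is that $\varphi_{G,\circ}^n$ is finite \'etale (proper $+$ quasi-finite $+$ \'etale) over $\Conf^{n-1}(\A^1)_{\Z[|G|^{-1}]}$, whose geometric fibers are connected, so every component surjects onto the configuration space and hence onto the base. For your argument, density (existence of a $\C$-point) is all you actually use, so the conclusion survives, but the stated justification should be corrected.
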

	
	\begin{proof}
		In the proof, all schemes and stacks are over $\Spec \Z[|G|^{-1}]$. Note that $\Hur_{G,\circ}^n$ can be covered by integral schemes. So to prove the first statement in the lemma it suffices to show: if there is a morphism $S \to \Hur_{G, \circ}^n$ and $S$ an integral scheme with generic geometric point $\overline{s}_1$, then for any geometric point $\overline{s}_2$ of $S$, the lifting invariants for the points $\overline{s}_i \to S \to \Hur_{G,\circ}^n$, $i=1,2$ are the same. This can be proved by the same argument for Theorem~6.1 in \cite{Wood-component}.

		Let the cover $(f: X \to \PP^1_S, \iota; P)$ be the object of $\Hur_{G, \circ}^n$ defined by the morphism $S \to \Hur_{G,\circ}^n$. Let $\overline{S}(\overline{s}_i)$ be the strict localization of $S$ at $\overline{s}_i$ for $i=1,2$. Then the object defined by $\overline{s}_i \to S\to \Hur_{G, \circ}^n$ is $(f_{\overline{s}_i}: X_{\overline{s}_i} \to \PP^1_{\overline{s}_i}, \iota_{\overline{s}_i}; P_{\overline{s}_i})$, where $f_{\overline{s}_i}$, $\iota_{\overline{s}_i}$ and $P_{\overline{s}_i}$ are obtained by base changing $f$, $\iota$ and $P$ along ${\overline{s}_i} \to S$. Let $D$ be the branch locus of $f$ and $Z$ be the reduced subscheme of $f^{-1}(D)$. Since $Z$ is \'etale over $S$, $Z\times_S \overline{S}(\overline{s}_2)$ is isomorphic to a disjoint union of copies of $\overline{S}(\overline{s}_2)$, and $P_{\overline{S}(\overline{s}_2)}$ is one of these copies. Then $G$ acts on $Z\times_S \overline{S}(\overline{s}_2)$, and we let $G_{\infty}$ be the stabilizer of $P_{\overline{S}(\overline{s}_2)}$. Then, since $P_{\overline{s}_1}$ and $P_{\overline{s}_2}$ are the pullback of $P\times_S \overline{S}(\overline{s}_2)$ along $\overline{s}_1 \to \overline{S}(\overline{s}_1)  \to \overline{S}(\overline{s}_2)$ and $\overline{s}_2 \to \overline{S}(\overline{s}_2)$ respectively, we see that the inertia group of $f_{\overline{s}_i}$ at $P_{\overline{s}_i}$ is $G_{\infty}$ for $i=1,2$.
	\end{proof}
	
	\begin{definition}\label{def:Hur-circ}
		Let $G$ be a finite group and $G_{\infty}$ a nontrivial cyclic subgroup of $G$. Let $c$ be a subset of $G\backslash \{1\}$ containing all generators of $G_{\infty}$ such that $c$ generates $G$ and is closed under conjugation and taking invertible powers. Define $\Hur_{G, G_{\infty}, c}^n$ to be the substack of $(\Hur_{G, \circ}^n)_{\Z[|G|^{-1}]}$ consisting of the components whose geometric points are covers with all inertia groups at branch points in $\A^1$ generated by elements in $c$ and the inertia group at the chosen point above $\infty$ equals $G_{\infty}$.
	\end{definition}

	 For integers $n$ and $m$, let $\Z_{n,\geq m}^{c/G}$ be the subset of $\Z^{c/G}$ consisting of elements all of whose coordinates are at least $m$ and sum to $n$, let $U(G,c)_{n,\geq m}$ be the preimage of $\Z_{n,\geq m}^{c/G}$ under the projection map $U(G,c)\to \Z^{c/G}$, let $\frakS^{c,G}_{n,\geq m}$ be the image of $\Z_{n,\geq m}^{c/G}$ under the map $\Z^{c/G} \to \frakS^{c,G}$, and let $K(G,c)_{n, \geq m}$ be the intersection of $K(G,c)$ with $U(G,c)_{n, \geq m}$.
	 Every geometric point $x$ of $(\Hur_{G,G_{\infty},c}^n)_{k}$ has the shape invariant that is a map $\Zhat(1)_{\kk(x)}^{\times} \to \frakS^{c,G}$, where $\kk(x)$ is the residue field of the point. By Lemma~\ref{lem:shape-inv}, the shape invariants for all geometric points on each component are equal. Let $\Hur_{G,G_{\infty},c, \geq m}^n$ be the set of components whose corresponding shape invariant has image contained in $\frakS^{c,G}_{n,\geq m}$.

	\begin{lemma}\label{lem:lifting}
		Let $G$ be a finite group, $c$ a subset of $G \backslash\{1\}$ that generates $G$ and is closed under conjugation and taking invertible powers, and $G_{\infty}$ a nontrivial cyclic subgroup of $G$.
		\begin{enumerate}
			\item\label{item:lifting-1} Let $k$ be an algebraically closed field with $\Char(k) \nmid |G|$. Then all $k$-points in the same component of $(\Hur_{G, G_{\infty},c}^n)_k$ have the same lifting invariant with image in $U(G,c)$.
			\item\label{item:lifting-2} Let $q$ be a prime power such that $\gcd(q, |G|)=1$. If $x$ is a $\overline{\F}_q$-point of $(\Hur_{G,G_{\infty},c}^n)_{\overline{\F}_q}$ and $\Frob_q$ is the geometric Frobenius map on $(\Hur_{G,G_{\infty},c}^n)_{\overline{\F}_q}$, then for a topological generator $\zeta$ of $\Zhat(1)_{\overline{\F}_q}$, we have $\frakz_{\Frob_q(x)}(\zeta)=q^{-1} \ast \frakz_{x}(\zeta)$.
			
			\item\label{item:lifting-3}	There exists a constant $M$ such that there is a bijection between the set of connected components of $(\Hur_{G,G_{\infty},c,\geq M}^n)_{\C}$ and the set 
			\[
				\coprod_{g_{\infty}} K(G, c)_{n,\geq M}[g_{\infty}]^{-1}.
			\]
			Here the disjoint product runs over all generators $g_{\infty}$ of $G_{\infty}$, $K(G, c)_{n,\geq M}[g_{\infty}]^{-1}$ is the subset $\{t\cdot[g_{\infty}]^{-1} \mid t \in K(G, c)_{n,\geq M}\}$ of $U(G,c)$.
		\end{enumerate}
	\end{lemma}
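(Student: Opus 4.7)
The plan is to adapt the arguments in \cite[\S5--\S6]{Wood-component} from the $\infty$-unbranched setting (points of $\Hur_{G,\ast}^n$) to our $\infty$-branched setting on $(\Hur_{G,\circ}^n)_{red}$. Most of Wood's machinery should carry over; the main new bookkeeping is to track carefully the extra inertia generator $\gamma_\infty$ and the factor $[\varphi_x(\gamma_\infty)]$ produced by the condition that $\infty$ ramifies.

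For part \ref{item:lifting-1}, I would mirror the proof of \cite[Theorem~6.1]{Wood-component}. Since $(\Hur_{G,G_\infty,c}^n)_k$ is smooth by Lemma~\ref{lem:varphi-finet}, and hence covered by integral schemes, it suffices to establish invariance under specialization: for an integral scheme $S$ with a morphism to $(\Hur_{G,G_\infty,c}^n)_k$, the lifting invariants at any two geometric points of $S$ should agree. The branch locus of the associated cover $f: X \to \PP^1_S$, restricted to $\A^1_S$, is \'etale over $S$ by Lemma~\ref{lem:varphi-finet}, so the tame fundamental groups of the complements specialize compatibly and send distinguished inertia generators to distinguished inertia generators, up to braiding and simultaneous conjugation. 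The lifting invariant is already invariant under these operations by \cite[Theorem~5.2]{Wood-component}, so it is constant on $S$ and hence on every component.

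For part \ref{item:lifting-2}, the essential point is that the geometric Frobenius $\Frob_q$ acts on the tame fundamental group via the $q$-th power map on $\Zhat(1)_{\overline{\F}_q}$: if $\gamma_i = r_{t_i}(\zeta)$, then the corresponding inertia generator for $\Frob_q(x)$ is $r_{\Frob_q(t_i)}(\zeta^q)$. Applying $\varphi_x$, reassembling the product, and comparing with the definition of the $\ast$-action from \cite[\S4]{Wood-component} should yield the formula $\frakz_{\Frob_q(x)}(\zeta) = q^{-1} \ast \frakz_x(\zeta)$; the inverse on $q$ reflects the convention by which the $\ast$-action compensates between raising $\zeta$ to the $q$-th power and relabelling conjugacy classes of $c$ accordingly.

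For part \ref{item:lifting-3}, I would combine Lemma~\ref{lem:C-component} with the Ellenberg--Venkatesh--Westerland stabilization theorem as formulated in \cite{Wood-component}. Lemma~\ref{lem:C-component} identifies components of $(\Hur_{G,G_\infty,c}^n)_{\C}$ with $(B_{n-1}\times \langle g_\infty\rangle)$-orbits of tuples $(g_1,\ldots,g_{n-1}) \in c^{n-1}$ satisfying $\prod g_i = g_\infty^{-1}$ and $\langle g_1,\ldots,g_{n-1},g_\infty\rangle = G$, as $g_\infty$ ranges over generators of $G_\infty$. Because the full twist in $B_{n-1}$ implements simultaneous conjugation by $g_1\cdots g_{n-1} = g_\infty^{-1}$, the $\langle g_\infty\rangle$-action is already absorbed into the $B_{n-1}$-orbits. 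The lifting invariant sends such a tuple to $[g_1]\cdots[g_{n-1}] \in U(G,c)$, and since $g_1\cdots g_{n-1}g_\infty = 1$ in $G$, multiplying by $[g_\infty]$ produces an element of $K(G,c)_{n,\geq M}$; equivalently, the lifting invariant lies in $K(G,c)_{n,\geq M}[g_\infty]^{-1}$. The hard part, and the heart of the argument, is invoking EVW stabilization in the form of \cite[Theorem~6.2]{Wood-component}, adapted to the shifted product constraint $\prod g_i = g_\infty^{-1}$: for $M$ sufficiently large depending only on $(G,c)$, this invariant is a bijection between $B_{n-1}$-orbits of admissible tuples and $K(G,c)_{n,\geq M}[g_\infty]^{-1}$, with surjectivity being the subtlest point.
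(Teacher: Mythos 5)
Your proposal follows the same overall strategy as the paper's proof. For parts~(1) and~(2) the paper is in fact terser than you are: it simply observes that the statements are analogous to Corollary~12.2 and Theorem~12.1(1b) of \cite{LWZB}, whose proofs in turn are the specialization/Frobenius arguments from \cite{Wood-component} that you sketch, so your account is consistent with (and slightly more explicit than) the paper's.

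For part~(3) the two arguments use the same ingredients — Lemma~\ref{lem:C-component} together with the stabilization/lifting-invariant theorem of \cite{Wood-component} for tuples with a prescribed product — but dispose of the residual $\langle g_\infty\rangle$-action in a different order. You observe up front that simultaneous conjugation by $g_1\cdots g_{n-1}=g_\infty^{-1}$ is realized by the full twist in $B_{n-1}$, so the $\langle g_\infty\rangle$-action is already contained in the braid orbits and the quotient by $\langle g_\infty\rangle$ in Lemma~\ref{lem:C-component} is redundant before one ever invokes Wood's theorem. The paper instead first applies \cite[Theorem~3.1]{Wood-component} to identify $B_{n-1}$-orbits with $K(G,c)_{n,\geq M}[g_\infty]^{-1}\subset U(G,c)$, and then notes that the induced $G_\infty$-action on that coset (by conjugation by a lift of $g$) is trivial because $K(G,c)$ lies in the center of $U(G,c)$ and $g_\infty$ commutes with its own lift $[g_\infty]$. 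Both routes are valid and give the same result; your version avoids having to discuss the induced action on $U(G,c)$ at all, at the cost of appealing to the identification of the full twist with simultaneous conjugation by the product. One small inaccuracy: the paper cites Theorem~3.1 of \cite{Wood-component} for the stabilization step, not Theorem~6.2, though the substance is the same.
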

	
	\begin{proof}
		The statements \eqref{item:lifting-1} and \eqref{item:lifting-2} are analogous to Corollary~12.2 and Theorem~12.1(1b) in \cite{LWZB}. Let $\calS_{G,g_{\infty},c,\geq M}^n$ be the set of tuples $(g_1, \ldots, g_{n-1}) \in c^{n-1}$ such that $G=\langle g_1, \ldots, g_{n-1} \rangle$, $\prod_{i=1}^{n-1}g_i=g_{\infty}^{-1}$, and each conjugacy class in $c$ appears for at least $M$ times in the tuple if it does not contain $g_{\infty}$ and appears for at least $M-1$ times otherwise. By the same argument for Lemma~\ref{lem:C-component}, there is a bijection between the set of connected components of $(\Hur_{G,G_{\infty},c, \geq M}^n)_{\C}$ and the set 
		\[
			\coprod_{g_{\infty}}\faktor{\calS_{G,g_{\infty},c,\geq M}^n}{\langle g_{\infty} \rangle \times B_{n-1}}.
		\]
		\cite[Theorem~3.1]{Wood-component} implies that, when $M$ is sufficiently large, there is a bijection between ${\calS_{G,g_{\infty},c,\geq M}^n}/B_{n-1}$ and the corresponding subset of $U(G,c)$, which one can check is exactly $K(G,c)_{n, \geq M}[g_{\infty}]^{-1}$. 
		So, there is a bijection between the set of connected components of $(\Hur_{G,G_{\infty},c, \geq M}^n)_{\C}$ and the set 
			\[
				\coprod_{g_{\infty}} \faktor{K(G, c)_{n,\geq M}[g_{\infty}]^{-1}}{G_{\infty}}.
			\]
			Here $g \in G_{\infty}$ acts on $K(G,c)_{n,\geq M}[g_{\infty}]^{-1}$ via the conjugation by a lift $\tilde{g} \in U(G,c)$ of $g$ (this action is well-defined because $U(G,c)\to G$ is a central extension). Finally, because $K(G, G \backslash \{1\})_{n,\geq M}$ is contained in the center of $U(G,c)$, the $G_{\infty}$-action on $K(G,c)_{n,\geq M}[g_{\infty}]^{-1}$ is trivial.
	\end{proof}

\section{Counting points on the stack $\Hur_{G,G_{\infty},c}^n$}\label{sect:trace-formula}

	For an algebraic stack $\calX$ defined over $\F_q$, \emph{the number of $\F_q$-points of $\calX$} is
	\[
		\# \calX(\F_q):=\sum_{x \in |\calX(\F_q)|} \frac{1}{\# \Aut_x},
	\]
	where $|\calX(\F_q)|$ is the set of isomorphism classes of the groupoid of $\F_q$-points of $\calX$, and $\Aut_x$ denote the automorphism group of any object representing $x$. Note that the number of $\F_q$-points of $\calX$ equals the number of $\F_q$-points of the reduced closed substack $\calX_{red}$ of $\calX$ (See Lemma~\ref{lem:point-counting=red}).

	By Behrend's trace formula \cite[Theorem 1.0.1]{Behrend-2}, when $\gcd(q,|G|)=1$,
	\begin{equation}\label{eq:traceforuma}
		\# (\Hur_{G,G_{\infty},c}^n)_{red} (\F_q)=q^{n-1} \sum_{i\geq 0} (-1)^i \tr \left(\Phi_q \mid H^i( ((\Hur_{G,G_{\infty},c}^n)_{red})_{\overline{\F}_q}, \Q_{\ell})\right)
	\end{equation}
	where $\Phi_q$ is the arithmetic Frobenius acting on the $\ell$-adic cohomology of $((\Hur_{G,G_{\infty},c}^n)_{red})_{\overline{\F}_q}$ and $\ell$ is coprime to $q$. 
	When $i=0$, 
	\[
		\tr \left(\Phi_q \mid H^0( ((\Hur_{G,G_{\infty},c}^n)_{red})_{\overline{\F}_q}, \Q_{\ell})\right)= \#\left\{\text{$\Phi_q$-fixed components of $((\Hur_{G,G_{\infty},c}^n)_{red})_{\overline{\F}_q}$}\right\},
	\]
	and an estimation of this term can be given by studying the lifting invariant. So we want to understand the terms for $i>0$, in particular, we want to show the sum of them gives an error term that is dominated by $q^{n-1}$ as $n,q\to \infty$.

	\begin{lemma}\label{lem:Hur-Conf}
		Let $\calX$ be a connected component of $(\Hur_{G, \circ}^n)_{red}$, and $k$ an algebraically closed field such that $\Char(k) \nmid |G|$. Let $\pi: \calX_k \to \Conf^{n-1}(\A^1)_k$ be the restriction of $\varphi_{G, \circ}^n$. 
		Then $\pi$ induces a natural isomorphism
		\begin{equation}\label{eq:coh-larey}
			H^i(\calX_{k}, \Q_\ell) \simeq H^i(\Conf^{n-1}(\A^1)_{k}, \pi_* \Q_{\ell})
		\end{equation}
		for all $i$, and $\pi_*\Q_{\ell}$ is locally constant. 
		Here the sheaf cohomology on the left-hand side is taken on the \'etale site and the right-hand side is the \'etale cohomology of the scheme.
	\end{lemma}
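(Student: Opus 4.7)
The plan is the following. By Lemma~\ref{lem:varphi-finet}, the morphism $\varphi_{G,\circ}^n$ is étale, proper, and quasi-finite after base change to $\Z[|G|^{-1}]$, and since $\Char(k) \nmid |G|$ we are in this regime over $k$. Restricting to the connected component $\calX$, the induced map $\pi : \calX_k \to \Conf^{n-1}(\A^1)_k$ is thus étale, proper, and quasi-finite.

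The first step is to show that $R^i \pi_* \Q_\ell = 0$ for all $i > 0$ and that $\pi_* \Q_\ell$ is locally constant. For any geometric point $\bar{s}$ of $\Conf^{n-1}(\A^1)_k$, the fiber $\calX_{\bar{s}}$ is a finite disjoint union of classifying gerbes $\coprod_x B A_x$ for finite automorphism groups $A_x$, since $\pi$ is étale, proper, and quasi-finite. By the proper base change theorem for Deligne--Mumford stacks, the stalk $(R^i \pi_* \Q_\ell)_{\bar{s}}$ is identified with $H^i(\calX_{\bar{s}}, \Q_\ell) \simeq \bigoplus_x H^i(BA_x, \Q_\ell)$. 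A standard transfer argument shows $H^i(BA_x, \Q_\ell) = 0$ for $i > 0$ and every finite group $A_x$: multiplication by $|A_x|$ acts both as an isomorphism on $\Q_\ell$-coefficients and as zero on $H^{i}(A_x, \Q_\ell)$ via restriction-corestriction. This yields the vanishing of $R^i \pi_* \Q_\ell$ for $i > 0$, and the stalk of $\pi_* \Q_\ell$ at $\bar{s}$ is a free $\Q_\ell$-module of rank equal to the number of geometric connected components of the fiber. Since $\pi$ is étale (and proper), this rank is locally constant on $\Conf^{n-1}(\A^1)_k$, and $\pi_* \Q_\ell$ upgrades to a locally constant sheaf.

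The isomorphism \eqref{eq:coh-larey} then follows from the Leray spectral sequence
\[
E_2^{p,q} = H^p(\Conf^{n-1}(\A^1)_k, R^q \pi_* \Q_\ell) \Longrightarrow H^{p+q}(\calX_k, \Q_\ell),
\]
which collapses onto the $q = 0$ line by the vanishing just established.

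The main technical hurdle is invoking proper base change and Leray for a non-representable morphism of Deligne--Mumford stacks. If the stacky references become cumbersome, a cleaner alternative I would pursue is to factor $\pi$ through the coarse moduli space $X$ of $\calX$: the induced morphism $X_k \to \Conf^{n-1}(\A^1)_k$ is a finite étale morphism of schemes, so its pushforward of $\Q_\ell$ is manifestly locally constant, and the comparison $H^i(\calX_k, \Q_\ell) \simeq H^i(X_k, \Q_\ell)$---itself a consequence of $H^{i}(BA_x, \Q_\ell) = 0$ for $i>0$ applied fiberwise to $\calX \to X$---reduces the Leray step to the classical scheme-theoretic statement, bypassing stacky proper base change entirely.
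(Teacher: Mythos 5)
Your proposal is correct in outline and rests on the same two pillars as the paper's proof (vanishing of $R^i\pi_*\Q_\ell$ for $i>0$, plus local constancy of $\pi_*\Q_\ell$, then Leray), but it reaches them by a somewhat different route. For the vanishing, the paper cites Olsson's Fujiwara-theorem paper (Theorem~5.1, Corollary~5.8) to get $R^j\pi_*\Q_\ell = R^j\pi_!\Q_\ell = 0$ directly, whereas you compute stalks via proper base change for DM stacks and kill $H^{>0}(BA_x,\Q_\ell)$ by restriction--corestriction. Both are legitimate; yours is more self-contained but would need an explicit citation for proper base change in the stacky setting (Laszlo--Olsson or Olsson's book). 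For local constancy, the paper invokes Olsson's local structure theorem to write $\pi$ locally as $[V/H]\to U$ with $V\to U$ finite \'etale and then factors through $V$; this is essentially the same device as your alternative route through the coarse moduli space, which is clean and bypasses the stacky base change entirely.

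The one genuine soft spot is in your main local-constancy argument: from ``the stalk rank of $\pi_*\Q_\ell$ is locally constant'' you jump to ``$\pi_*\Q_\ell$ upgrades to a locally constant sheaf.'' For a constructible $\Q_\ell$-sheaf this does not follow from rank constancy alone; you need control of the (co)specialization maps, which here comes from proper base change plus the \'etaleness of $\pi$, and it is exactly this step that the paper's factorization through $[V/H]\to U$ (or your coarse-space alternative) makes airtight. I would promote your last paragraph from ``alternative'' to the primary argument: the component $\calX$ has constant geometric stabilizer (it is $Z_G\cap G_\infty$ for our Hurwitz components), so $\calX\to X$ is a gerbe banded by a constant finite group and $X\to\Conf^{n-1}(\A^1)_k$ inherits finiteness and \'etaleness from $\pi$; then $\pi_*\Q_\ell$ is the pushforward of $\Q_\ell$ along a finite \'etale map of schemes, and $H^i(\calX_k,\Q_\ell)\simeq H^i(X_k,\Q_\ell)$ by the gerbe vanishing, so Leray for schemes finishes. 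With that promotion, your proof is complete and arguably more transparent than the paper's.
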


	\begin{proof}
		Set $Y:=\Conf^{n-1}(\A^1)_{k}$. Lemma~\ref{lem:varphi-finet} shows that $\pi$ is \'etale proper quasi-finite and $\calX$ is Deligne--Mumford, so it has finite diagonal and, by \cite[Theorem~5.1 and Corollary~5.8]{Olsson_Fuji}, $R^j\pi_*\Q_{\ell}=R^j\pi_!\Q_{\ell}=0$ for $j\neq 0$. Then the isomorphism \eqref{eq:coh-larey} follows by the Leray spectral sequence \cite[0782]{SP} 
		\[
			E_2^{i,j}=H^i(Y, R^j \pi_* \Q_{\ell}) \Rightarrow H^{i+j}(\calX_k, \Q_{\ell}).
		\]
		Since $\calX_k$ is separated Deligne-Mumford, by \cite[Theorem~11.3.1]{Olsson}, for any point $x \in \calX_k$ there exists a neighborhood $\pi(x) \in U$ in $Y$ and a finite $U$-scheme $V$ such that above $U$ the cover $\pi$ is given by $[V/H] \to U$, where $H$ is the finite automoprhism group of $x$. The covering map $p: V \to [V/H]$ is smooth, so the composition $V\overset{p}{\to} [V/H] \overset{\pi}{\to} U$ is smooth; moreover, $V\to U$ is proper since it is finite. Then since $p^*\Q_{\ell}$ is locally constant on the neighborhood $[V/H]$ of $x$ \cite[093R]{SP}, $\pi_* \Q_{\ell} =(\pi \circ p)_*p^*\Q_{\ell}$ is locally constant by \cite[Chapter~VI, Corollary~4.2]{Milne-EC}.		
	\end{proof}

	\begin{lemma}\label{lem:comparison}
		For a prime $p$ coprime to $|G|$, a prime $\ell>n-1$ and an integer $i\geq 0$, the isomorphism holds 
		\[
			H^i(\Conf^{n-1}(\A^1)_{\overline{\F}_p}, \pi_* \Q_{\ell}) \simeq H^i(\Conf^{n-1}(\A^1)_{\C}, \pi_* \Q_{\ell}),
		\]
		where both sides are \'etale cohomology. 
	\end{lemma}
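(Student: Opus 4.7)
The plan is to reduce both étale cohomologies to the continuous cohomology of a common profinite group on a common representation, via three inputs: Artin comparison over $\C$, the étale $K(\pi,1)$-property of smooth affine varieties for finite lisse coefficients, and Grothendieck's specialization of étale fundamental groups.

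First, because $\pi$ is finite étale over $\Spec \Z[|G|^{-1}]$ by Lemma~\ref{lem:varphi-finet}, the lisse sheaf $\pi_*\Q_\ell$ corresponds to a continuous representation $V$ of $\pi_1^{\et}$ whose image is a finite group of prime-to-$p$ order (since the Galois closure of $\calX \to \Conf^{n-1}(\A^1)$ has degree dividing a power of $|G|$, and $p\nmid|G|$). Over $\C$, Artin's comparison together with the fact that $\Conf^{n-1}(\A^1)(\C)$ is a topological $K(B_{n-1},1)$-space yields
\[
H^i_{\et}\bigl(\Conf^{n-1}(\A^1)_{\C}, \pi_*\Q_\ell\bigr) \;\cong\; H^i(B_{n-1}, V).
\]
Over $\overline{\F}_p$, I would invoke the étale $K(\pi,1)$-property of the smooth affine variety $\Conf^{n-1}(\A^1)_{\overline{\F}_p}$ for lisse coefficients of finite prime-to-$p$ monodromy to obtain
\[
H^i_{\et}\bigl(\Conf^{n-1}(\A^1)_{\overline{\F}_p}, \pi_*\Q_\ell\bigr) \;\cong\; H^i_{\mathrm{cont}}\bigl(\pi_1^{\et}(\Conf^{n-1}(\A^1)_{\overline{\F}_p})^{(p')}, V\bigr),
\]
where $(p')$ denotes the prime-to-$p$ completion.

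Finally, Grothendieck's specialization theorem for étale fundamental groups supplies an isomorphism $\pi_1^{\et}(\Conf^{n-1}(\A^1)_{\C})^{(p')} \cong \pi_1^{\et}(\Conf^{n-1}(\A^1)_{\overline{\F}_p})^{(p')}$ compatible with the monodromy of the cover $\pi$, which is itself defined over $\Z[|G|^{-1}]$ and hence specializes. By continuity (the image of $V$ being finite of prime-to-$p$ order), the continuous group cohomologies on both sides agree with $H^i(B_{n-1}, V)$, and stringing the three comparisons together yields the desired isomorphism.

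The main obstacle is verifying the étale $K(\pi,1)$-property for $\Conf^{n-1}(\A^1)_{\overline{\F}_p}$ with the given range of coefficients in positive characteristic. The hypothesis $\ell>n-1$ is natural here: since $\Conf^{n-1}(\A^1)$ is affine of dimension $n-1$, Artin's affine vanishing forces the cohomology to live in degrees $\leq n-1$, and taking $\ell>n-1$ ensures $\ell$ is coprime to any torsion in $H^{\leq n-1}(B_{n-1}, \Z)$ (which is known to be supported at primes at most $n-1$), so that the passage between topological and étale group cohomology with $\Q_\ell$-coefficients goes through cleanly.
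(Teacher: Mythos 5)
Your proposal hinges on asserting that $\Conf^{n-1}(\A^1)_{\overline{\F}_p}$ is an \'etale $K(\pi,1)$ for finite prime-to-$p$ lisse coefficients, but you don't justify this, and it is not a routine consequence of smoothness and affineness. In positive characteristic the \'etale fundamental group has an enormous wild part, and whether \'etale cohomology with tame coefficients is computed by the cohomology of the tame (or prime-to-$p$) quotient of $\pi_1^{\et}$ is precisely the $K(\pi,1)$ property you are trying to use; it is not automatic in dimension $>1$, and asserting it as a black box leaves a real gap. This is exactly the difficulty the paper engineers around: it does not argue on $\Conf^{n-1}(\A^1)$ directly but passes to the pure configuration space $\PConf^{n-1}(\A^1)$, which by \cite[Lemma~7.6]{EVW} admits a smooth proper compactification over $\Spec\Z$ with relative normal crossings boundary, so that the argument of \cite[Proposition~7.7]{EVW} (ultimately a smooth/proper base change argument along the strata of the boundary) yields a functorial comparison $H^i(\PConf_{\overline{\F}_p}, f^*\pi_*\Z/\ell^j\Z) \simeq H^i(\PConf_{\C}, f^*\pi_*\Z/\ell^j\Z)$ for finite locally constant coefficients. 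One then descends to $\Conf^{n-1}(\A^1)$ by taking $S_{n-1}$-invariants via Hochschild--Serre.

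This also reveals that your reading of the hypothesis $\ell > n-1$ is off. It is not there to control torsion in braid group cohomology; it is there so that $\ell \nmid |S_{n-1}| = (n-1)!$, which makes taking $S_{n-1}$-invariants exact and collapses the Hochschild--Serre spectral sequence, giving $H^i(\PConf, -)^{S_{n-1}} \simeq H^i(\Conf, -)$ on both sides. You should either supply a proof (or a precise reference) for the $K(\pi,1)$ property of configuration spaces in characteristic $p$, or switch to the compactification argument, which avoids the issue entirely.
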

	
	\begin{proof}
		Let $\PConf^{n-1}(\A^1)$ be the moduli space of $n-1$ labelled points on $\A^1$, so there is an $S_{n-1}$ cover $f:\PConf^{n-1}(\A^1) \to\Conf^{n-1}(\A^1)$. By \cite[Lemma~7.6]{EVW}, $\PConf^{n-1}(\A^1)$ has a compatification $X$ that is a smooth and proper scheme over $\Spec\Z$ and such that $X\backslash \PConf^{n-1}(\A^1)$ is a relative normal crossings divisor. Following the proof of \cite[Proposition~7.7] {EVW}, we obtain the functorial isomorphism
		\[
			H^i(\PConf^{n-1}(\A^1)_{\overline{\F}_p}, f^*\pi_* \Z/\ell^j\Z)\simeq H^i(\PConf^{n-1}(\A^1)_{\C}, f^*\pi_* \Z/\ell^j\Z)
		\]
		(the argument in \cite[Proposition~7.7]{EVW} is only stated for coefficient $\Z/\ell\Z$, but the same works for finite locally constant coefficients; and note that $f^*\pi_*\Z/\ell^j\Z$ is locally constant).
		Since $\ell > n-1$, by the Hochschild-Serre spectral sequence \cite[Chapter III, Theorem~2.20]{Milne-EC},
		\[
			H^i(\PConf^{n-1}(\A^1)_{k}, f^*\pi_* \Z/\ell^j\Z)^{S_{n-1}} \simeq H^i(\Conf^{n-1}(\A^1)_k, \pi_*\Z/\ell^j\Z)
		\]
		for both $k=\overline{\F}_p$ and $k=\C$. Then the lemma follows because
		\[
			H^i(\Conf^{n-1}(\A^1)_k, \pi_*\Q_{\ell})=\left(\varprojlim_{j \to \infty} H^i(\Conf^{n-1}(\A^1)_k, \pi_* \Z/\ell^j\Z)\right) \otimes_{\Z_\ell} \Q_\ell.
		\]
	\end{proof}

	Let $\pi_{G,G_{\infty},c}(q,n)$ be the number of $\Phi_q$-fixed components of $((\Hur_{G,G_{\infty},c}^n)_{red})_{\overline{\F}_q}$.
	
	\begin{proposition}\label{prop:HurPointCount}
		Let $q$ be a prime power that is coprime to $|G|$. Then there exists a constant $C(G, c_\infty, c, n)$ depending on $G, G_{\infty}, c, n$ such that
		\[
			|\#(\Hur_{G,G_{\infty},c}^n)_{red}(\F_q)-\pi_{G,G_{\infty},c}(q,n) q^{n-1}| \leq C(G, c_\infty, c, n) q^{(2n-3)/2}.
		\]
	\end{proposition}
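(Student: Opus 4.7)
The plan is to apply Behrend's trace formula \eqref{eq:traceforuma} and separate the $i=0$ contribution as the main term $\pi_{G,G_{\infty},c}(q,n) q^{n-1}$, then bound the tail $i \geq 1$ via Deligne's purity to gain the $q^{-1/2}$ savings that produce $q^{(2n-3)/2}$. For $i=0$, the space $H^0(((\Hur_{G,G_{\infty},c}^n)_{red})_{\overline{\F}_q}, \Q_{\ell})$ has a $\Q_\ell$-basis indexed by the connected components of $((\Hur_{G,G_{\infty},c}^n)_{red})_{\overline{\F}_q}$ on which $\Phi_q$ acts by permutation, so its trace equals the number of $\Phi_q$-fixed components, namely $\pi_{G,G_{\infty},c}(q,n)$; this exactly matches the subtracted term.

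For the tail, I would apply Lemma~\ref{lem:Hur-Conf} componentwise and sum, reducing to the étale cohomology of a scheme:
$$
H^i\bigl(((\Hur_{G,G_{\infty},c}^n)_{red})_{\overline{\F}_q}, \Q_{\ell}\bigr) \cong H^i\bigl(\Conf^{n-1}(\A^1)_{\overline{\F}_q}, \pi_*\Q_{\ell}\bigr),
$$
where $\pi$ is the restriction of the branch locus map $\varphi_{G,\circ}^n$ to each component. By Lemma~\ref{lem:varphi-finet}, this restriction is finite étale (over $\Z[|G|^{-1}]$), so $\pi_*\Q_{\ell}$ is lisse; at every closed point of $\Conf^{n-1}(\A^1)$ the Frobenius acts on the stalk by permuting the finite fiber, so $\pi_*\Q_{\ell}$ is pure of weight $0$. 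Deligne's purity (Weil II), combined with Poincaré duality on the smooth variety $\Conf^{n-1}(\A^1)$ of dimension $n-1$ with coefficients in a pure lisse sheaf of weight $0$, shows that the eigenvalues of the geometric Frobenius on $H^i$ have complex absolute value at least $q^{i/2}$; hence those of the arithmetic Frobenius $\Phi_q$ have absolute value at most $q^{-i/2}$.

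To control dimensions uniformly in $q$, I would choose a prime $\ell > n-1$ coprime to $q|G|$ and invoke Lemma~\ref{lem:comparison}:
$$
\dim_{\Q_{\ell}} H^i\bigl(\Conf^{n-1}(\A^1)_{\overline{\F}_q}, \pi_*\Q_{\ell}\bigr) = \dim_{\Q_{\ell}} H^i\bigl(\Conf^{n-1}(\A^1)_{\C}, \pi_*\Q_{\ell}\bigr),
$$
a Betti number that depends only on $G, G_{\infty}, c, n$ and vanishes for $i > 2(n-1)$. Writing $B := \sum_{i=1}^{2(n-1)} \dim H^i$, one obtains
$$
\Bigl| q^{n-1} \sum_{i \geq 1} (-1)^i \tr\bigl(\Phi_q \mid H^i\bigr) \Bigr| \leq q^{n-1} \sum_{i=1}^{2(n-1)} (\dim H^i)\, q^{-i/2} \leq C(G, G_{\infty}, c, n)\, q^{(2n-3)/2},
$$
with $C := 2(n-1)B$ absorbing the finite geometric series for $q \geq 1$, which combined with the $i=0$ identification yields the proposition.

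The main obstacle is justifying the weight bound for $\Phi_q$ on $H^i$ in a form directly applicable to our situation: one must first reduce from the Deligne--Mumford stack to ordinary étale cohomology of the smooth affine scheme $\Conf^{n-1}(\A^1)$ via Lemma~\ref{lem:Hur-Conf}, and then verify that $\pi_*\Q_{\ell}$ is pure of weight $0$ using the finite étaleness of the restricted branch locus map. Once these reductions are in place, Deligne's theorem applies in textbook form.
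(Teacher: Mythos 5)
Your proposal is correct and follows essentially the same route as the paper: Behrend's trace formula, isolate the $i=0$ main term as the count of $\Phi_q$-fixed components, transfer the $i>0$ terms to the \'etale cohomology of $\Conf^{n-1}(\A^1)$ via Lemma~\ref{lem:Hur-Conf}, apply Weil~II weight bounds for compactly supported cohomology with Poincar\'e duality to get $|\tr(\Phi_q\mid H^i)|\le (\dim H^i)q^{-i/2}$, and use Lemma~\ref{lem:comparison} to control the dimensions independently of $q$. The only differences are cosmetic: you phrase the weight-zero statement for $\pi_*\Q_\ell$ via the permutation action on stalks (which is arguably cleaner than the paper's ``acts trivially'' phrasing), and your constant $C=2(n-1)B$ carries an unnecessary but harmless factor — $\sum_{i\ge 1}(\dim H^i)q^{-i/2}\le q^{-1/2}\sum_i \dim H^i$ already yields $C=B$, matching the paper.
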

	
	\begin{proof}
		Set $\calX:=(\Hur_{G,G_{\infty},c}^n)_{red}$ and compute $\#\calX(\F_q)$ using the formula \eqref{eq:traceforuma}. For $i=0$, 
		\[
			\tr\left(\Phi_q \mid H^0(\calX_{\overline{\F}_q}, \Q_\ell)\right)=\pi_{G,G_{\infty},c}(q,n).
		\]
		For $i>0$, by Lemma~\ref{lem:Hur-Conf}, 
		\[
			\tr\left(\Phi_q \mid H^i(\calX_{\overline{\F}_q}, \Q_{\ell})\right) = \tr\left(\Phi_q \mid H^i(\Conf^{n-1}(\A^1)_{\overline{\F}_q}, \pi_*\Q_{\ell}) \right)
		\]
		Set $Y:=\Conf^{n-1}(\A^1)$. For every geometric point $y \in Y$ lying over $\Spec \overline{\F}_q \in \Spec \Z[|G|^{-1}]$, $\Phi_q$ acts trivially on $(\pi_*\Q_{\ell})_y$ because $\pi_{\F_q}$ commutes with the $\Phi_{q}$-action, so $H_c^{2(n-1)-i}(Y_{\overline{\F}_q},\pi_*\Q_{\ell})$ is $\iota$-mixed of weight $\leq 2(n-1)-i$ by \cite[Th\'eor\`eme~1]{Deligne}, i.e. each eigenvalue of $\Phi_q^{-1}$ (the geometric Frobenius) acting on the compact support cohomology $H_c^{2(n-1)-i}(Y_{\overline{\F}_q},\pi_*\Q_{\ell})$ is at most $q^{\frac{2(n-1)-i}{2}}$ for any embedding $\iota:\overline{\Q}_{\ell} \hookrightarrow \C$. By the Poincar\'e duality, each eigenvalue of $\Phi_q$ acting on $H^i(Y_{\overline{\F}_q}, \pi_* \Q_{\ell})=H_c^{2(n-1)-i}(Y_{\overline{\F}_q}, \pi_*\Q_{\ell})^{\vee}(-n+1)$ is at most $q^{-i/2}$.
		So 
		\[
			|\#\calX(\F_q)-\pi_{G,G_{\infty},c}(q,n) q^{n-1}|\leq \sum_{i=1}^{2n-2} q^{\frac{2n-3}{2}} \dim_{\Q_{\ell}} H^i(Y_{\overline{\F}_q}, \pi_* \Q_{\ell}).
		\]
		Finally, the proof is completed by Lemma~\ref{lem:comparison} and setting
		\[
			C(G, c_\infty, c, n):=\sum_{i=1}^{2n-2} \dim_{\Q_{\ell}} H^i(Y_{\C}, \pi_*\Q_{\ell}).
		\]
	\end{proof}
	
	We finish this section by showing $\#\Hur_{G,G_{\infty},c}^n(\F_q)=\#(\Hur_{G,G_{\infty},c}^n)_{red}(\F_q)$.
	
	\begin{lemma}\label{lem:point-counting=red}
		For an algebraic stack $\calX$ defined over $\F_q$,
		\[
			\# \calX(\F_q) = \# \calX_{red}(\F_q),
		\]
		where $\calX_{red}$ is the reduction of $\calX$.
	\end{lemma}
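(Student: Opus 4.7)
The plan is to show that the natural morphism of groupoids $\calX_{red}(\F_q) \to \calX(\F_q)$ induced by the closed immersion $i: \calX_{red} \to \calX$ is an equivalence; the equality of weighted point counts then follows immediately from the definition $\#\calX(\F_q) = \sum_{x \in |\calX(\F_q)|} 1/\#\Aut_x$.

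First I would recall from \cite[050C]{SP} that $\calX_{red}$ is characterized by the universal property that any morphism $T \to \calX$ from a reduced algebraic space (or scheme) $T$ factors uniquely through $i$. Since $\Spec \F_q$ is reduced, this gives essential surjectivity of $\calX_{red}(\F_q) \to \calX(\F_q)$: every object $x: \Spec \F_q \to \calX$ of $\calX(\F_q)$ lifts to an object $\widetilde{x}: \Spec \F_q \to \calX_{red}$, and $i \circ \widetilde{x} \cong x$ canonically.

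Next I would verify fully faithfulness. Closed immersions of algebraic stacks are monomorphisms in the $2$-categorical sense \cite[04ZZ]{SP}, which means the diagonal $\calX_{red} \to \calX_{red} \times_{\calX} \calX_{red}$ is an isomorphism. Equivalently, for any scheme $T$ and any two $T$-points $\widetilde{x}_1, \widetilde{x}_2$ of $\calX_{red}$, the map
\begin{equation*}
\Isom_{\calX_{red}}(\widetilde{x}_1, \widetilde{x}_2) \longrightarrow \Isom_{\calX}(i \circ \widetilde{x}_1, i \circ \widetilde{x}_2)
\end{equation*}
is a bijection. Applied to $T = \Spec \F_q$ and $\widetilde{x}_1 = \widetilde{x}_2 = \widetilde{x}$, this shows that the automorphism group of $\widetilde{x}$ in $\calX_{red}(\F_q)$ coincides with the automorphism group of $i \circ \widetilde{x} = x$ in $\calX(\F_q)$.

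Combining essential surjectivity with the identification of automorphism groups yields an equivalence of groupoids $\calX_{red}(\F_q) \xrightarrow{\sim} \calX(\F_q)$; in particular the sets $|\calX_{red}(\F_q)|$ and $|\calX(\F_q)|$ are in bijection, and corresponding isomorphism classes have automorphism groups of the same order. Plugging these into the defining formula gives $\#\calX_{red}(\F_q) = \#\calX(\F_q)$. There is no real obstacle here — the only subtle point is to be careful that ``monomorphism'' for stacks really does give equality (not just injection) of automorphism groups, which is exactly the content of the isomorphism of diagonals for a closed immersion.
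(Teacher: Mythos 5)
Your proof is correct and runs essentially parallel to the paper's. Both arguments reduce to showing that $\calX_{red}(\F_q) \to \calX(\F_q)$ is an equivalence of groupoids via essential surjectivity (universal property of reduction) plus agreement of automorphism groups; the only difference is in how the latter is established. You use that the closed immersion $\calX_{red}\hookrightarrow\calX$ is a monomorphism of algebraic stacks, hence induces fully faithful functors on fiber categories, giving the identification of $\Aut$ groups directly. The paper instead notes that a closed immersion is unramified, invokes the resulting isomorphism of inertia stacks $\calI_{\calX_{red}}\simeq \calX_{red}\times_{\calX}\calI_{\calX}$ (\cite[06R5]{SP}), and then extracts the automorphism-group isomorphism from \cite[0DU9]{SP}. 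The two routes are mathematically equivalent here — the paper's hypothesis (unramified) is formally weaker than yours (monomorphism), but both hold for a closed immersion — and neither buys anything over the other for this lemma.
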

	\begin{proof}
		Every morphism $\Spec \F_q \to \calX$ factors through $\calX_{red}$ \cite[050B]{SP}, so $|\calX(\F_q)|=|\calX_{red}(\F_q)|$. Then it suffices to show that for every $x \in |\calX_{red}|$, the reduction morphism $f: \calX_{red} \to \calX$ induces an isomorphism $\Aut_x\simeq \Aut_{f(x)}$. 
		
		Since $f$ is a closed immersion, $f$ is unramified \cite[02GC]{SP}, and by \cite[06R5]{SP}, $\calI_{\calX_{red}} \simeq \calX_{red} \times_{\calX} \calI_{\calX}$ where $\calI_{\calX_{red}}$ and $\calI_{\calX}$ are the inertia stacks associated to $\calX_{red}$ and $\calX$ respectively. Then $\Aut_x \simeq \Aut_{f(x)}$ by \cite[0DU9]{SP}.
	\end{proof}

\section{Proof of the function field moment result: Theorem~\ref{thm:FF-moment}}\label{sect:FF-moment}

	In this section, we prove Theorem~\ref{thm:FF-moment}. We first prove the following lemma that relates the number of surjections from $G_{\O}^{\#}(K)$ to $H$ to the number of $\F_q$-points on the Hurwitz stack, which implies that our function field moment result can be proved if we have a nice estimation of the number of points on Hurtiwz stacks. In \S\ref{ss:proof-FF-1}, we apply the results in \S\ref{sect:Hurwitz} and \S\ref{sect:trace-formula} to prove the statement \eqref{eq:FF-moment-1}. Then in \S\ref{ss:proof-FF-2}, we apply the homological stability results of Landesman and Levy to prove \eqref{eq:FF-moment-2}.

\begin{lemma}\label{lem:Sur-Hur}
	Let $H$ be a finite admissible $\Gamma$-group, $G:= H \rtimes \Gamma$, and $G_{\infty}$ a cyclic subgroup of $G$ that has trivial intersection with $H$. Let $\Gamma_{\infty}$ be the image of $G_{\infty}$ under the projection map $G \to \Gamma$. Let $c_{G}$ be the set of nontrivial elements of $G$ that have the same order as their image in $\Gamma$. Then for any prime power $q$ with $(q, |G|)=1$,
	\[
		\#\Hur^n_{G,G_{\infty},c_{G}}(\F_q) =\frac{[H^{\Gamma_{\infty}}: H^{\Gamma}]}{|G_{\infty}|} \sum_{K \in E_{\Gamma,\Gamma_{\infty}}(q^{n-1}, \F_q(t))} \#\Sur_{\Gamma}(G_{\O}^{\#}(K), H).
	\]
\end{lemma}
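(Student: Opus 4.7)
The plan is to set up a weighted correspondence between $\F_q$-points of $\Hur^n_{G, G_\infty, c_G}$ and pairs $(K, \alpha)$ with $K \in E_{\Gamma, \Gamma_\infty}(q^{n-1}, \F_q(t))$ and $\alpha \in \Sur_\Gamma(G_\O^\#(K), H)$, carefully tracking automorphism groups and cocycle-level multiplicities.

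I would first parametrize $\F_q$-points of $\Hur^n_{G, G_\infty, c_G}$ by continuous surjections $\phi \colon G_{\F_q(t)} \to G$ modulo $G_\infty$-conjugation: after fixing a separable closure and the distinguished place $\bar\infty$, the data $(f,\iota,P)$ translates to a $\phi$ satisfying (i) $\phi(I_{\bar\infty}) = \phi(D_{\bar\infty}) = G_\infty$ as subgroups (encoding that the marked point has inertia $G_\infty$ and is $\F_q$-rational); (ii) inertia at each finite ramified prime is generated by some element of $c_G$; (iii) the total number of branch primes is $n$. Since $\phi$ is surjective, its stabilizer under $G$-conjugation is $Z(G)$, and an orbit-stabilizer computation gives $\#\Hur^n_{G, G_\infty, c_G}(\F_q) = |\Sigma|/|G_\infty|$, where $\Sigma$ is the set of such $\phi$. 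Setting $\bar\phi := \pi \circ \phi$ for $\pi \colon G \to \Gamma$ the projection, condition (i) forces $\bar\phi$ to define a $(\Gamma, \Gamma_\infty)$-extension $(K, \iota_K)$, and the $c_G$-condition together with $\langle g \rangle \cap H = 1$ for $g \in c_G$ gives $|\rDisc(K/Q)| = q^{n-1}$ by tame ramification. The restriction $\phi|_{G_K} \colon G_K \to H$ is a homomorphism, and conditions (i) and (ii) combined with normality of $H$ and $G_\infty \cap H = 1$ make it unramified at all finite places and unramified-and-split at every place above $\infty$ (splitness at non-distinguished places is automatic from normality). Hence it factors through $G_\O^\#(K)$ and yields some $\alpha \in \Sur_\Gamma(G_\O^\#(K), H)$.

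The core counting step is to show that for fixed $\bar\phi$ (hence $K$), the map $\phi \mapsto \alpha$ has all fibers of cardinality $[H^{\Gamma_\infty} : H^\Gamma]$. I would parametrize liftings of $\bar\phi$ by $1$-cocycles $\delta \colon G_{\F_q(t)} \to H$ (with $G_Q$-action via $\bar\phi$ and the given $\Gamma$-action on $H$) via $\phi = (\delta, \sigma \circ \bar\phi)$ in the $H \rtimes \Gamma$ notation. Condition (i) becomes $\delta|_{D_{\bar\infty}} = 1$, while the cocycle relation forces $\delta|_{G_K} = \alpha$ to be $\Gamma$-equivariant with respect to the natural outer $\Gamma$-action on $G_K$. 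Given $\alpha$, the extensions $\delta$ form a torsor over $Z^1(\Gamma, H)$. Since $\gcd(|\Gamma|, |H|) = 1$, we have $H^1(\Gamma, H) = *$, so every $1$-cocycle on $\Gamma$ is of the coboundary form $c_v \colon \gamma \mapsto v^{-1}\gamma(v)$; such $c_v$ vanishes on $\Gamma_\infty$ iff $v \in H^{\Gamma_\infty}$, and $c_v = c_{v'}$ iff $v'v^{-1} \in H^\Gamma$. This yields exactly $[H^{\Gamma_\infty} : H^\Gamma]$ extensions with the desired local behavior at $\bar\infty$ for each $\alpha$. Summing over $K \in E_{\Gamma,\Gamma_\infty}(q^{n-1}, \F_q(t))$ and dividing by $|G_\infty|$ gives the formula in the lemma.

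The hardest part will be the cocycle bookkeeping of the last paragraph. The key technical points are verifying that the local condition $\delta|_{D_{\bar\infty}} = 1$ translates precisely into the associated cocycle on $\Gamma$ vanishing on $\Gamma_\infty$ (using that $\alpha$ kills the decomposition group at the distinguished place above $\infty$, which is part of the split condition already baked into $\alpha$); upgrading the torsor parameterization to non-abelian $H$ via pointed-set $H^1$ (still trivial by a Zassenhaus-type vanishing since $\gcd(|\Gamma|,|H|)=1$); and absorbing the Schur--Zassenhaus ambiguity in the $\Gamma$-action on $G_\O^\#(K)$, which is only well-defined up to inner twist but gives the same $\#\Sur_\Gamma$ for any consistent choice of section.
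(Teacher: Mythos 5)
Your setup matches the paper's: parametrize $\F_q$-points of $\Hur_{G,G_\infty,c_G}^n$ by surjections $\phi\colon G_{\F_q(t)}\to G$ taken modulo $G_\infty$-conjugation, use orbit--stabilizer to get $\#\Hur(\F_q)=|\Sigma|/|G_\infty|$, and relate $\phi$ to a $(\Gamma,\Gamma_\infty)$-extension $K$ and a surjection onto $H$. The paper does exactly this, phrasing the remaining data as a pair $(s,\pi)$ with $s\colon\Gamma\hookrightarrow H\rtimes\Gamma$ a splitting with $s(\Gamma_\infty)=G_\infty$ and $\pi\in\Sur_\Gamma(G_\O^\#(K),H^s)$, and uses $H^s\simeq_\Gamma H$ to get the factor $[H^{\Gamma_\infty}:H^\Gamma]$.

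Your ``core counting step,'' however, has a genuine gap when $H$ is nonabelian. The cocycle $f:=\delta|_{\Gamma\text{-part}}$ and the restriction $\alpha':=\delta|_{G_K}$ are \emph{not} independent: the requirement that $\phi=(\delta,\sigma\circ\bar\phi)$ be a homomorphism forces the compatibility $\alpha'(\tau(\gamma)g\tau(\gamma)^{-1})=f(\gamma)\,\gamma_\sigma(\alpha'(g))\,f(\gamma)^{-1}$, i.e.\ $\alpha'$ is $\Gamma$-equivariant only for the $f$-twisted action on $H$. Consequently the set of lifts of a \emph{fixed} surjection $\alpha\in\Sur_\Gamma(G_\O^\#(K),H)$ (for a fixed $\Gamma$-action) is \emph{not} a $Z^1(\Gamma,H)$-torsor: once $\alpha$ is held fixed and surjective, the compatibility forces $f(\gamma)\in Z(H)$, so the fiber of $\phi\mapsto\phi|_{G_K}$ over $\alpha$ has cardinality $[Z(H)^{\Gamma_\infty}:Z(H)^\Gamma]$, not $[H^{\Gamma_\infty}:H^\Gamma]$; correspondingly the image of $\phi\mapsto\phi|_{G_K}$ is strictly larger than $\Sur_\Gamma(G_\O^\#(K),H)$ when $Z(H)\ne H$ (e.g.\ already for nonabelian $H$ of odd order with $\Gamma_\infty\subsetneq\Gamma$ and $H^{\Gamma_\infty}\not\subseteq Z(H)$). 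Your final total happens to be correct because the two errors---overstating the fiber size and understating the base---cancel, but as written the step is false. The fix, which is what the paper does, is to reverse the order: first enumerate the $[H^{\Gamma_\infty}:H^\Gamma]$ cocycles $f$ (equivalently splittings $s$) with $f|_{\Gamma_\infty}=1$, then for each fixed $f$ observe that the set of compatible $\alpha'$ is $\Sur_\Gamma(G_\O^\#(K),H^f)$, which has the same cardinality as $\Sur_\Gamma(G_\O^\#(K),H)$ since $H^f$ and $H$ are isomorphic $\Gamma$-groups via conjugation by a coboundary witness for $f$.
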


\begin{proof}
	By the correspondence between smooth projective curves over a field and their function fields, a point in $\Hur_{G, G_{\infty},c_G}^n(\F_q)$ defines an isomorphism class of $(L, \varpi, \frakp)$, where $L$ is a $G$-extension of $\F_q(t)$, $\varpi$ is an automorphism $\Gal(L/\F_q(t)) \overset{\sim}{\to} G$, and $\frakp$ is a prime of $L$ lying above $\infty$. We fix a separable closure $\overline{\F_q(t)}$ of $\F_q(t)$, and choose a place $\overline{\infty}$ of $\overline{\F_q(t)}$ lying above $\infty$. Then in $\overline{\F_q(t)}/\F_q(t)$, there is a unique subextension $M/\F_q(t)$ that is isomorphic to $L/\F_q(t)$, and $\overline{\infty}$ lies above a unique place $\frakm$ of $M$ above $\infty$. Then there are isomorphisms from $L$ to $M$ sending the place $\frakp$ of $L$ to $\frakm$. Using this correspondence, there is a bijection between the set of isomorphism classes of $\F_q$-points of $\Hur_{G, G_{\infty}, c_G}^n$ and the set of isomorphism classes of pairs $(M, \rho)$, where $M$ is an extension of $\F_q(t)$ and $\rho$ is an isomorphism $\Gal(M/\F_q(t)) \to G$ such that
	\begin{enumerate}
		\item\label{item:Mrho-1} at $\frakm$ the inertia group is $G_{\infty}$ and the residue field is $\F_q$, 
		\item\label{item:Mrho-2} the inertia group at every ramified place is generated by elements in $c_{G}$, and 
		\item\label{item:Mrho-3}$\rDisc(M/\F_q(t))=q^{n-1}$ (as the discriminant ideal carries only the information about ramification in $\A^1$),
	\end{enumerate}
	Given a pair $(M, \rho)$ as described above, composing $\rho$ with the natural projection $G\to \Gamma$, we obtain a subfield $K$ of $M$ and an isomorphism $\varphi: \Gal(K/\F_q(t)) \to \Gamma$ such that the inertia group $I_{\frakm}$ at the prime of $K$ lying below $\frakm$ is mapped to $\Gamma_{\infty}$. By the definition of $c_G$, $L$ is unramified over $K$, so $(K,\varphi) \in E_{\Gamma, \Gamma_{\infty}}(q^{n-1}, \F_q(t))$. Let $\calT_{\infty}(K)$ denote the inertia subgroup of $\Gal(K_{\O}^{\#}/\F_q(t))$ at the unique place of $K_{\O}^{\#}$ lying below $\overline{\infty}$. By definition of $K_{\O}^{\#}$ and $E_{\Gamma, \Gamma_{\infty}}(q^{n-1}, \F_q(t))$, $\calT_{\infty}(K)$ is mapped isomorphicly to $\Gamma_{\infty}$ under the composition of the natural surjection $\Gal(K_{\O}^{\#}/\F_q(t)) \to \Gal(K/\F_q(t))$ and $\varphi$.
	Then by the Schur--Zassenhaus theorem, we can choose and then fix a splitting $\Gal(K/\F_q(t))\hookrightarrow \Gal(K_{\O}^{\#}/\F_q(t))$ such that the image of this splitting contains $\calT_{\infty}(K)$. This chosen splitting together with $\varphi$ defines (via conjugation) a $\Gamma$-action on $G_{\O}^{\#}(K)$. Using $\rho$ and the chosen splitting, we obtain a splitting $s: \Gamma \hookrightarrow H \rtimes \Gamma$, and we define $H^s$ to be the group $H$ together with a new $\Gamma$-action given by setting $\gamma(h), \gamma \in \Gamma, h \in H$ to be the conjugation of $h$ by $s(\gamma)$ in $H\rtimes \Gamma$. Then from the construction, $\rho|_{\Gal(M/K)}$ defines a $\Gamma$-equivariant surjection $\pi: G_{\O}^{\#}(K) \to H^s$. 
	We obtain a map between sets
	\begin{eqnarray}
		\left\{ (M, \rho) \mid \text{$M$, $\rho$ satisfy \eqref{item:Mrho-1}, \eqref{item:Mrho-2}, \eqref{item:Mrho-3}}\right\} 
		\longrightarrow \left\{ (K, \varphi, s, \pi) \, {\Bigg|}\, \begin{aligned} (K, \varphi) \in E_{\Gamma,\Gamma_{\infty}}(q^{n-1}, \F_q(t)),   \\  \text{a splitting }s: \Gamma \hookrightarrow H \rtimes \Gamma, \\ \text{and } \pi \in \Sur_{\Gamma}(G_{\O}^{\#}(K), H^s)\end{aligned}\right\}.\label{eq:mapMK}
	\end{eqnarray}
	This map is injective. Indeed, suppose $(K, \varphi, s, \pi)$ is the image of $(M, \rho)$, then $M=(K_{\O}^{\#})^{\ker \pi}$ and $\rho$ can be recovered by $\varphi$, $s$ and $\pi$ since $\rho$ is the unique isomorphism $\Gal(M/\F_q(t)) \to H\rtimes \Gamma$ that the following surjection $\tilde{\rho}$ factors through
	\begin{eqnarray*}
		\tilde{\rho}: \Gal(K_{\O}^{\#}/\F_q(t)) = G_{\O}^{\#}(K) \rtimes \Gal(K/\F_q(t))& \longrightarrow & H\rtimes \Gamma \\
		\text{defined by } \quad G_{\O}^{\#}(K) &\overset{\pi}{\longrightarrow}& H^s \\
		\text{ and } \quad \Gal(K/\F_q(t)) &\xrightarrow{s \circ \varphi}& s(\Gamma)
	\end{eqnarray*}

	Now, we count the size of the image of \eqref{eq:mapMK}.  Fix a pair $(K, \varphi) \in E_{\Gamma, \Gamma_{\infty}}(q^{n-1}, \F_q(t))$. From how the splitting $\Gal(K/\F_q(t))\hookrightarrow \Gal(K_{\O}^{\#}/\F_q(t))$ is chosen, we see that $(K, \varphi, s, \pi)$ is contained in the image of \eqref{eq:mapMK}  if and only if $s \circ \varphi (I_{\frakm})=G_{\infty}$, and the latter is equivalent to $s(\Gamma_{\infty})=G_{\infty}$.
	By the Schur--Zassenhaus theorem, all splittings $\Gamma \hookrightarrow H \rtimes \Gamma$ are conjugate to each other by elements of $H$, so the number of splittings satisfying $s(\Gamma_{\infty})=G_{\infty}$ is $[H^{\Gamma_{\infty}}: H^{\Gamma}]$. Since $H$ and $H^s$ are isomorphic as $\Gamma$-groups, combining above arguments, we see that the size of the image of \eqref{eq:mapMK} is 
	\[
		[H^{\Gamma_{\infty}}: H^{\Gamma}]\sum_{K \in E_{\Gamma,\Gamma_{\infty}}(q^{n-1}, \F_q(t))} \#\Sur_{\Gamma}(G_{\O}^{\#}(K), H).
	\]
	
	 Two pairs $(M, \rho_1)$ and $(M, \rho_2)$ are isomorphic if and only if $\rho_1=g^{-1}\circ\rho_2\circ g$ for some $g\in \Gal(M/\F_q(t))$ that fixes the distinguished place $\frakm$ above $\infty$. The number of such automorphisms $g$ is $|G_{\infty}|$, and $\rho_1=\rho_2$ if and only if $g \in Z_G \cap G_{\infty}$. So the size of each isomorphism class of pairs $(M, \rho)$ as described in \eqref{eq:mapMK} is $|G_{\infty}|/|Z_G \cap G_{\infty}|$.
	Finally, because $\Aut_x=Z_G\cap G_{\infty}$ for every $x \in \Hur_{G,G_{\infty},c_G}^n(\F_q)$, we complete the proof.
\end{proof}
	
	Retain the notation and assumption from the proof of Lemma~\ref{lem:Sur-Hur}, and write $G_1:=G$, $G_{1, \infty}:=G_{\infty}$, $c_1:=c_G$, $G_2:=\Gamma$, $G_{2,\infty}:=\Gamma_{\infty}$ and $c_2:=\Gamma\backslash \{1\}$. Let $x\in \Hur_{G, G_{\infty},c_G}^n(\F_q)$ be a point and let $(K,\varphi, s, \pi)$ be the image under \eqref{eq:mapMK} of some $(M, \rho)$ in the isomorphism class corresponding to $x$. Then $\pi_* \circ \omega_K^{\#}$ is a group homomorphism $\omega_{M/K}: \Zhat(1)_{(q|\Gamma|)'} \to H_2(G_1, \Z)_{(q|\Gamma|)'}$, which is the $\omega$-invariant associated to $M/K$ (defined in \cite[Proposition~2.6, Definition~2.8]{Liu-ROU}). By \cite[Lemma~4.2]{Liu-ROU}, 
	\begin{equation}\label{eq:ROU4.2}
		H_2(G_1,c_1) \simeq H_2(G_1, \Z)_{(|\Gamma|)'} \oplus H_2(G_2,c_2).
	\end{equation} 
	Consider the composite map
	\begin{equation}\label{eq:com-map}
		\Zhat(1)^{\times}_{(q)'} \overset{\frakz_x}{\longrightarrow} K(G_1,c_1)[g_{\infty}]^{-1} \longrightarrow H_2(G_1,c_1) \longrightarrow H_2(G_1,\Z)_{(q|\Gamma|)'},
	\end{equation}
	where the first map is the lifting invariant associated to $x$, the middle map sends $\underline{t}[g_{\infty}]^{-1}$ for $\underline{t} \in K(G,c)$ to the image of $\underline{t}$ under the natural surjection $K(G,c) \to H_2(G,c)$ (defined by the definition of $U(G,c)$ and $K(G,c)$ in \S\ref{sect:lifting-inv}), and the last map is the projection defined by \eqref{eq:ROU4.2}.
	By the argument in the proof of \cite[Proposition~4.3]{Liu-ROU}, \eqref{eq:com-map} is the composition of $\Zhat(1)_{(q)'}^{\times} \to \Zhat(1)_{(q|\Gamma|)'}^{\times}$ and the restriction of $\omega_{M/K}$ to $\Zhat(1)^{\times}_{(q|\Gamma|)'}$. Therefore, $\omega_{M/K}$ is determined by the lifting invariant $\frakz_x$; and in particular, by the description of the Frobenius action given in Lemma~\ref{lem:lifting}\eqref{item:lifting-2}, one can check that every $\overline{\F}_q$-point in the same component of $(\Hur_{G_1,G_{1, \infty}, c_1})_{\F_q}$ has the same $\omega$-invariant.
		
	Then define 
	\begin{equation}\label{eq:def-X}
		\calX_{i}^n:=(\Hur_{G_i, G_{i, \infty}, c_i}^n)_{red} \quad \text{and} \quad \calX_{i, \geq M}^n:=(\Hur_{G_i, G_{i, \infty}, c_i,\geq M}^n)_{red},\, M \in \Z.
	\end{equation}
	For a group homomorphism $\delta: \Zhat(1)_{(q|\Gamma|)'} \to H_2(G_1, \Z)_{(q|\Gamma|)'}$, 
	\begin{eqnarray}
		&\text{let $\calX_{1, \delta}^n$ (and resp. $\calX_{1, \geq M, \delta}^n$) be the union of components of $(\calX_1^n)_{\F_q}$} & \nonumber \\
		&\text{ (and resp. $(\calX_{1, \geq M}^n)_{\F_q}$) whose $\overline{\F}_q$-points have $\omega$-invariant equal to $\delta$.} & \label{eq:def-X-delta}
	\end{eqnarray}
	Let $\pi_{G_1,G_{1,\infty},c_1}^{\delta}(q,n)$ be the number of $\Phi_q$-fixed components of $(\calX_{1,\delta}^n)_{\overline{\F}_q}$.
	By Lemma~\ref{lem:Sur-Hur} and Lemma~\ref{lem:point-counting=red}, we need to estimate $\#\calX_{1,\delta}^n(\F_q)/\#\calX_2^n(\F_q)$; so by Proposition~\ref{prop:HurPointCount}, we need to estimate $\pi_{G_1, G_{1,\infty},c_1}^{\delta}(q,n)$ and $\pi_{G_2, G_{2,\infty},c_2}(q,n)$.
	
\subsection{Proof of \eqref{eq:FF-moment-1}}\label{ss:proof-FF-1}
	
	Because $H^0((\calX^n_{i,\geq M})_{\overline{\F}_q},\Q_{\ell}) \simeq H^0((\calX^n_{i,\geq M})_{\C},\Q_{\ell})$ by Lemma~\ref{lem:Hur-Conf} and Lemma~\ref{lem:comparison}, the number of connected components of $(\calX^n_{i,\geq M})_k$ are equal for $k=\overline{\F}_q$ and $k=\C$. When $M$ is sufficiently large, Lemma~\ref{lem:lifting}\eqref{item:lifting-3} says that the connected components of $(\calX^n_{i,\geq M})_{\C}$ are one-one corresponding to $\coprod_{g_{\infty}} K(G_i,c_i)_{n,\geq M}[g_{\infty}]^{-1}$. By the same argument as \cite[Remark~5.4]{Wood-component}, every $\Zhat_{\overline{\F}_q}^{\times}$-equivariant map $\Zhat(1)_{\overline{\F}_q}^{\times} \to K(G_i,c_i)_{n,\geq M}[g_{\infty}]^{-1}$ can be realized as the lifting invariant $\mathfrak{z}_x$ for some $\overline{\F}_q$-point $x$ of $\calX_{i,\geq M}^n$. So the connected components of $(\calX^n_{i,\geq M})_{\overline{\F}_q}$ are one-one corresponding to the $\Zhat_{\overline{\F}_q}^{\times}$-equivariant maps $\Zhat(1)_{\overline{\F}_q}^{\times} \to \coprod_{g_{\infty}} K(G_i,c_i)_{n,\geq M}[g_{\infty}]^{-1}$. 
	
	Fix a generator $\zeta$ of $\Zhat(1)_{\overline{\F}_q}^{\times}$. Let $x \in \calX_{i,\geq M}^n(\overline{\F}_q)$. By definition of lifting invariants, $\mathfrak{z}_x(\zeta)$ equals $[g_1]\cdots [g_{n-1}]$, where $g_j:=\varphi_x(\gamma_j)$ for an appropriate choice of the presentation~\eqref{eq:pres-pi1}. By Lemma~\ref{lem:lifting}\eqref{item:lifting-2} and the description of the $\ast$ action in \cite[\S 4.1]{Wood-component}, we have 
	\[
		\mathfrak{z}_{\Frob_q(x)}(\zeta)=q^{-1} \ast \mathfrak{z}_x(\zeta)=\left([g_1^{q^{-1}}]^q \cdots [g_{n-1}^{q^{-1}}]^q \right)^{q^{-1}}.
	\]
	Let $g_{\infty}:=\varphi_x (\gamma_{\infty})$, then $g_1\cdots g_{n-1} g_{\infty}=1$ in $G_i$. So $[g_1]\cdots[g_{n-1}] [g_{\infty}] \in K(G_i,c_i)$ which is in the center of $U(G_i,c_i)$. Then we compute
	\begin{eqnarray*}
		\mathfrak{z}_{\Frob_q(x)}(\zeta) &=& \left([g_1^{q^{-1}}]^q \cdots [g_{n-1}^{q^{-1}}]^q [g_{\infty}^{q^{-1}}]^q [g_{\infty}^{q^{-1}}]^{-q}\right)^{q^{-1}} \\
		&=& \left([g_1^{q^{-1}}]^q \cdots [g_{n-1}^{q^{-1}}]^q [g_{\infty}^{q^{-1}}]^q\right)^{q^{-1}} [g_{\infty}^{q^{-1}}]^{-1}
	\end{eqnarray*}
	Because $g_{\infty}$ is a generator of $G_{\infty}$, our assumption that $|G_{\infty}|=|\Gamma_{\infty}|$ divides $q-1$ implies $[g_{\infty}^{q^{-1}}]^{-1} = [g_{\infty}]^{-1}$. So both $\frakz_{\Frob_q(x)}(\zeta)$ and $\frakz_{x}(\zeta)$ are in the same coset $K(G_i,c_i)_{n,\geq M}[g_{\infty}]^{-1}$, and moreover, if $\frakz_{x}(\zeta)=v[g_{\infty}]^{-1}$ for some $v \in K(G_i,c_i)$, then $\frakz_{\Frob_q(x)}(\zeta)=(q^{-1}\ast v)[g_{\infty}]^{-1}$. So the connected components of $(\calX^n_{i,\geq M})_{\overline{\F}_q}$ fixed by the arithmetic Frobenius $\Phi_q$ are one-one corresponding to the elements in $\coprod_{g_{\infty}} K(G_i,c_i)_{n,\geq M}$ that is fixed by the $q^{-1} \ast$ action. By the argument in the proof of \cite[Proposition~12.7]{LWZB}, the number of elements of $K(G_i,c_i)_{n,\geq 0} \backslash K(G_i,c_i)_{n,\geq M}$ fixed by the $q^{-1}\ast$ action is $O_{G_i}(n^{d_{G_i,c_i}(q)-2})$ (where $d_{G_i,c_i}(q)$ is the number of the orbits of $q$th powering on the set of nontrivial $G_i$-conjugacy classes in $c_i$),
	the number of elements of $K(G_i,c_i)_{n,\geq 0}$ fixed by the $q^{-1}\ast$-action equals the quantity $b(G_i, c_i, q, n)$ (a constant explicitly defined in \cite[Proposition~12.7]{LWZB}), and moreover
	\begin{equation}\label{eq:12.7-pi-2}
		\pi_{G_i,G_{i, \infty}, c_i}(q,n) = \begin{cases}
		0 \hfill \text{if $b(G_i,c_i,q,n)=0$}\\
		b(G_i,c_i,q,n) \#\{\text{generators of }G_{i, \infty}\} + O_{G_i}(n^{d_{G_i,c_i}(q)-2})  \quad\text{otherwise}.
	\end{cases}
	\end{equation}
	Similarly, one can verify in an analogous fashion to \cite[Lemma~4.5]{Liu-ROU} that $\pi_{G_1, G_{1,\infty},c_1}^{\delta}(q,n)$ can be estimated by $b(G_1,c_1, q, n; \delta)\#\{\text{generators of $G_{1,\infty}$}\}$ as
	\begin{equation}\label{eq:12.7-pi-1-delta}
		\pi_{G_1, G_{1,\infty},c_1}^{\delta}(q,n)=\begin{cases}
		0 \hfill \text{if $b(G_1,c_1, q, n; \delta)=0$} \\
		b(G_1,c_1, q, n; \delta)\#\{\text{generators of $G_{1,\infty}$}\} +O_{G_1}(n^{d_{G_1,c_1}(q)-2}) \quad \text{otherwise}.
		\end{cases}
	\end{equation}
	where $b(G_1,c_1, q, n; \delta)$ is defined in \cite[\S~4.3]{Liu-ROU}. 
	By \cite[Lemma~4.6]{Liu-ROU} and \cite[Corollary~12.9]{LWZB}, $b(G_1,c_1,q,n;\delta)=b(G_2,c_2,q,n)$ is either 0 or $\asymp n^{d_{G_1,c_1}(q)-1}$, where the asymptotic in the latter case only depends on $G_1, c_1$.
	
	Then using the same method in the proof of \cite[Theorem~1.4]{LWZB} and Proposition~\ref{prop:HurPointCount}, we have
	\begin{eqnarray*}
		\lim\limits_{N \to \infty} \lim\limits_{q\to \infty}  \frac{\sum\limits_{ n \leq N} \#\calX_{1,\delta}^n(\F_q)}{\sum\limits_{ n \leq N} \#\calX_2^n(\F_q)} 
		&=& \lim\limits_{N \to \infty} \lim\limits_{q\to \infty} \frac{\sum\limits_{ n \leq N} \pi_{G_1,G_{1,\infty},c_1}^{\delta}(q,n)}{\sum\limits_{ n \leq N} \pi_{G_2, G_{2, \infty},c_2}(q,n)} \\
		&=&\lim\limits_{N \to \infty} \lim\limits_{q\to \infty} \frac{b(G_1, c_1, q, \underline{N};\delta)\#\{\text{generators of $G_{\infty}$}\}}{ b(G_2, c_2, q, \underline{N})\#\{\text{generators of $\Gamma_{\infty}$}\}} \\
		&=&1,
	\end{eqnarray*}
	where $\underline{N}$ is the maximal integer not exceeding $N$ such that $b(G_1, c_1, q, \underline{N};\delta)=b(G_2, c_2, q, \underline{N})>0$ and limits over $q$ are taken over prime power $q$ satisfying $(q,|\Gamma||H|)=1$ and $q \equiv 1 (\bmod |\Gamma_{\infty}||\im \delta|)$. Finally, Theorem~\ref{thm:FF-moment} follows by applying Lemma~\ref{lem:Sur-Hur}.
	
\subsection{Proof of \eqref{eq:FF-moment-2}}\label{ss:proof-FF-2}
	
	Landesman and Levy in \cite{LL-CL, LL-HS} proved the desired results about homological stability for Hurwitz stacks to prove the moment version of the Cohen--Lenstra--Martinet type of conjectures when $q$ is sufficiently large. We will relate their results to the setup in this paper and then prove \eqref{eq:FF-moment-2}.

	We first recall the definition of \emph{pointed Hurwitz space} $\CHur$ in \cite{LL-CL} (see \cite[Definitions~2.1.1, 2.1.3 and Notation~2.1.7]{LL-CL}). For an integer $w$ dividing $|G|$ and a scheme $B$ such that $|G|$ is invertible on $B$, let $\scrP^w_B$ be the root stack of order $w$ along $\infty$ of $\PP^1_B$, and let $\widetilde{\infty}_B:B \to \scrP^w_B$ be the natural section corresponding to the map $B \to [(\Spec_B \scrO_B[x]/(x^w))/\mu_w]$. The \emph{$w$-pointed Hurwitz space}, $(\CHur_{n,B}^{G,c})^w$ is defined to be the scheme whose $S$-points are the set parametrizing the data, written to be consistent with our notation, 
	\[
		(f': X \to \scrP_S^w, \; f: X \to \PP_S^1,\; \iota: G \to \Aut f;\; t: S \to X \times_{f',\scrP_S^w, \widetilde{\infty}_S} S).
	\]
	Here $f$ is a connected Galois cover such that its branch divisor in $\A^1_S$ is a finite \'etale cover of $S$ of degree $n$, the geometric fibers of $X$ over $S$ are connected, the inertia group at $\infty$ has order $w$, and the inertia group over any geometric branch point is generated by elements in $c$; $\iota$ is an isomorphism; $f'$ is an \'etale cover over $\widetilde{\infty}$ such that the composition of $h'$ and the coarse space map $\pi:\scrP_S^w \to \PP^1_S$ is $f$; $t$ is a section of $f'$ over $\widetilde{\infty}_S$ (so that the 2-isomorphism between $S \overset{t}{\to} X \times_{\scrP_S^w} S \to X \xrightarrow{f'} \scrP_S^w$ and $S \overset{t}{\to} X \times_{\scrP_S^w} S \to S \xrightarrow{\widetilde{\infty}_S} \scrP_S^w$ is the identity morphism). (The definition here appears differently from but agrees with the definition given in \cite{LL-CL} -- one can verify it by comparing the definition of ``tame cover'' in \cite[\S11]{LWZB} with \cite[Definition~2.1.1]{LL-CL}.) By \cite[Remark~2.1.4]{LL-CL}, $\CHur_{n, B}^{G,c}$ is a finite \'etale cover of $\Conf^n(\A^1)_B$. The Galois group $G$ naturally acts on $\CHur_{n,B}^{G,c}$ via the Galois conjugation action on choice of the section above $\widetilde{\infty}_S$ (and hence acting on $t$ and $f'$), and the quotient stack $[\CHur_{n,B}^{G,c}/G]$ is the stack parametrizing $G$-covers of $\PP^1$ \cite[Definition~2.1.1]{LL-CL}.
	
	\begin{remark}
		In the works of Landesman and Levy, they use the notation $\Hur$ to be denote the pointed Hurwitz spaces parameterizing the covers of stacky $\PP^1$ that are not necessarily connected, and the ``$\CHur$'' schemes described above are subschemes of their ``$\Hur$'' Hurwitz schemes. In this paper, the notation $\Hur$ always denotes the Hurwitz stacks parameterizing connected covers of $\PP^1$ as defined in \S\ref{sect:Hurwitz}.
	\end{remark}
	
	Given an object $x \in (\CHur_{n,B}^{G,c})^w(S)$ parametrizing $(f', f, \iota; t)$, the composition of $t$ and the natural map $X\times_{\scrP_S^w} S \to X$ gives a map $P: S \to X$ that is a section over $\infty_S: S \to \PP^1_S$. 
	\begin{equation}\label{eq:comparison}
	\begin{tikzcd}
		S \arrow["t"]{dr} \arrow["P"', bend right=20]{ddr} \arrow["=", bend left=20]{drr} & & & \\
		& X \times_{\scrP_S^w} S \arrow{d} \arrow{r} & S \arrow["="]{r} \arrow["\widetilde{\infty}_S"]{d} & S\arrow["\infty_S"]{d} \\
		& X \arrow["f'"]{r} \arrow["f"', bend right=20]{rr} & \scrP_S^w \arrow["\pi"]{r} & \PP^1_S
	\end{tikzcd}
	\end{equation}
	So $x$ gives an object $(f, \iota; P)$, which is in $\Hur_{G,1}^n(S)$ if $w=1$ and in $\Hur_{G,1}^{n+1}(S)$ if $w>1$. Therefore, we obtain a morphism
	\begin{equation}\label{eq:stack-comparison}
		\coprod_{w>1} (\CHur_{n-1,B}^{G,c})^w \longrightarrow (\Hur_{G,\circ}^n)_B.
	\end{equation}
	
	In the rest of this section, let $B=\F_q$ for some prime power $q$ satisfying $(q,|G|)=1$ and $q\equiv 1 (\bmod |G_{\infty}|)$.
	Let $\CHur_{n-1, \F_q}^{G,G_{\infty},c}$ be the open and closed subscheme of $(\CHur_{n-1,\F_q}^{G,c})^{|G_{\infty}|}$ consisting of components whose images under \eqref{eq:stack-comparison} are in $(\Hur_{G,G_{\infty},c}^n)_{\F_q}$. 
	
	\begin{lemma}\label{lem:Comp-CH-H}
		Retain the notation and assumptions above. The morphism 
		\begin{equation}\label{eq:map-CH-H}
			\CHur_{n-1, \F_q}^{G, G_{\infty}, c} \longrightarrow (\Hur_{G,G_{\infty},c}^n)_{\F_q}
		\end{equation}
		factors through the quotient stack $[\CHur_{n-1, \F_q}^{G, G_{\infty}, c}/G_{\infty}]$. Moreover, $[\CHur_{n-1, \F_q}^{G, G_{\infty}, c}/G_{\infty}]$ is the reduced substack of $(\Hur_{G,G_{\infty},c}^n)_{\F_q}$,
		and
		\[
			\#[\CHur_{n-1, \F_q}^{G, G_{\infty}, c}/G_{\infty}](\F_q) = \#\Hur_{G,G_{\infty},c}^n(\F_q).
		\]
	\end{lemma}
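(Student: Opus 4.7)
The plan is to exhibit \eqref{eq:map-CH-H} as an \'etale $G_{\infty}$-torsor onto $(\Hur_{G,G_{\infty},c}^n)_{\F_q,red}$, from which all three assertions of the lemma follow in sequence.

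\emph{Invariance and factoring through the reduction.} The Galois $G$-action on $\CHur_{n-1,\F_q}^{G,c}$ sends $(f',f,\iota;t)$ to $(f',f,\iota;\sigma\cdot t)$ with $\sigma\in G$ acting on $X$ via $\iota(\sigma)\in\Aut f$. Chasing through the diagram~\eqref{eq:comparison}, the induced point on $X$ is $\sigma\cdot P$, which equals $P$ whenever $\sigma\in G_{\infty}$ since $G_{\infty}$ is the inertia subgroup at $P$. Hence the morphism \eqref{eq:map-CH-H} is $G_{\infty}$-invariant and descends to a morphism out of $[\CHur_{n-1,\F_q}^{G,G_{\infty},c}/G_{\infty}]$. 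By \cite[Remark~2.1.4]{LL-CL}, $\CHur_{n-1,\F_q}^{G,G_{\infty},c}$ is a finite \'etale cover of the smooth scheme $\Conf^{n-1}(\A^1)_{\F_q}$, hence smooth and reduced; since reducedness is smooth-local \cite[04YF]{SP} and $\CHur\to[\CHur/G_{\infty}]$ is smooth surjective, the quotient stack is reduced, so the morphism factors uniquely through the reduced substack $(\Hur_{G,G_{\infty},c}^n)_{\F_q,red}$.

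\emph{The torsor property.} Both $\CHur_{n-1,\F_q}^{G,G_{\infty},c}\to\Conf^{n-1}(\A^1)_{\F_q}$ (by \cite[Remark~2.1.4]{LL-CL}) and $(\Hur_{G,G_{\infty},c}^n)_{\F_q,red}\to\Conf^{n-1}(\A^1)_{\F_q}$ (by Lemma~\ref{lem:varphi-finet}) are \'etale, so the relative morphism
\[
    \CHur_{n-1,\F_q}^{G,G_{\infty},c}\longrightarrow(\Hur_{G,G_{\infty},c}^n)_{\F_q,red}
\]
is \'etale and $G_{\infty}$-equivariant with trivial action on the target. To verify that this action is free with geometric fibers of cardinality $|G_{\infty}|$, fix a geometric point $(f,\iota;P)$ of the target. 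Since $X\to\scrP^w$ is a connected \'etale Galois cover of degree $|G|$, it is a $G$-torsor, so the pullback $X\times_{\scrP^w,\widetilde{\infty}_S}S$ is a $G$-torsor over $S$ --- a scheme of cardinality $|G|$ carrying a simply transitive $G$-action. Under the projection to $X$ this $G$-set covers the $|G|/|G_{\infty}|$ points of $f^{-1}(\infty)$ with each fiber a single $G_{\infty}$-orbit, so the $|G_{\infty}|$ sections $t$ lying over $P$ form a free transitive $G_{\infty}$-orbit. Consequently the displayed morphism is a $G_{\infty}$-torsor, yielding $[\CHur_{n-1,\F_q}^{G,G_{\infty},c}/G_{\infty}]\simeq(\Hur_{G,G_{\infty},c}^n)_{\F_q,red}$, and the final point-count identity then follows by applying Lemma~\ref{lem:point-counting=red}.

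\emph{Main obstacle.} The substantive step is the geometric analysis of fibers in the previous paragraph, which relies on the root stack $\scrP^w$ exactly rigidifying the cyclic inertia of order $w=|G_{\infty}|$ at $\infty$, so that $X\to\scrP^w$ genuinely becomes a $G$-torsor and the $|G_{\infty}|$-fold splitting of its fiber over $\widetilde{\infty}$ is permuted simply transitively by the inertia subgroup $G_{\infty}$; once this is in hand, everything else is formal manipulation of reduced substacks and quotient stacks.
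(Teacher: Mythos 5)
Your proposal follows the same overall strategy as the paper's proof: (i) establish $G_{\infty}$-invariance, (ii) check that $[\CHur/G_{\infty}]$ is reduced so the morphism factors through the reduction, (iii) show $\CHur\to(\Hur_{G,G_{\infty},c}^n)_{\F_q,red}$ is an \'etale $G_{\infty}$-torsor. The invariance, reducedness, and \'etaleness arguments are correct and are actually a bit slicker than the paper (which checks \'etaleness implicitly via Lemma~\ref{lem:varphi-finet} and a local construction). However, the fiber computation --- which you yourself flag as the crux --- has real problems.

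First, the claim that ``$X\to\scrP^w$ is a connected \'etale Galois cover of degree $|G|$'' is false: the root stack $\scrP^w$ absorbs only the ramification at $\infty$, and $f'\colon X\to\scrP^w$ remains ramified over every branch point in $\A^1$. What is true, and what you actually need, is that $f'$ is \'etale over a neighborhood of $\widetilde{\infty}_S$; this is a local statement, not a global torsor statement. In particular $X\to\scrP^w$ is not a $G$-torsor, so ``pulling back a $G$-torsor'' is not a legitimate justification for computing $X\times_{\scrP^w,\widetilde{\infty}_S}S$, and the simply transitive $G$-action on this fiber requires an argument (one must check that $G$ acts compatibly over $\scrP^w$, not just over $\PP^1$).

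Second, and more substantively, you suppress the choice of the factorization $f'$ entirely and attribute the $|G_{\infty}|$-fold degree to the sections $t$. This is incompatible with the rigidification built into Landesman--Levy's definition: $t$ is required to have \emph{identity} 2-isomorphism between $f'\circ(\mathrm{pr}_X\circ t)$ and $\widetilde{\infty}_S\circ(\mathrm{pr}_S\circ t)$, which makes $\CHur$ a scheme rather than a stack with $\mu_w$-inertia. With that rigidification, $t$ is determined by $(f',P)$, and the $|G_{\infty}|$-fold cover comes from the $|G_{\infty}|$ choices of the lift $f'\colon X\to\scrP^w$ of $f$ (differing by automorphisms of $\scrP^w$ supported at $\widetilde{\infty}_S$), which is exactly what the paper's local computation with $\coprod\Spec\calO_{U_x}[T]/(T^w-a_x)$ establishes. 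Relatedly, your description of the $G$-action on $\CHur$ as fixing $f'$ and only moving $t$ is incorrect --- the action moves both $f'$ and $t$ coherently. Your count lands on the right number $|G_{\infty}|$, but for the wrong reason, and as written does not establish that the $G_{\infty}$-action on the actual fiber of $\CHur$ over $(f,\iota;P)$ is free and transitive. A corrected argument needs to account for the lift $f'$ explicitly, as the paper does.
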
 
	
	\begin{proof}
		The lemma follows by examining the $G$-action on $\CHur_{n-1, \F_q}^{G,c}$. Let $w:=|G_{\infty}|$. Since $\CHur_{n-1, \F_q}^{G, G_{\infty}, c}$ is a finite \'etale cover of $\Conf^{n-1}(\A^1)_{\F_q}$, $\CHur_{n-1, \F_q}^{G, G_{\infty}, c}$ is a reduced scheme and hence the quotient stack $[\CHur_{n-1, \F_q}^{G, G_{\infty}, c}/G_{\infty}]$ is a reduced stack and the morphism \eqref{eq:map-CH-H} factors through the reduced substack $((\Hur_{G,G_{\infty},c}^n)_{\F_q})_{red}$ of $(\Hur_{G,G_{\infty},c}^n)_{\F_q}$. 
		For every point of $((\Hur_{G,G_{\infty},c}^n)_{\F_q})_{red}$, there is a neighborhood $S \to((\Hur_{G,G_{\infty},c}^n)_{\F_q})_{red}$ such that $S$ is a regular scheme. Let $(f, \iota; P)$ represent the $S$-object given by the map $S \to((\Hur_{G,G_{\infty},c}^n)_{\F_q})_{red}$. We use the notation in diagram~\eqref{eq:comparison}. 
		
		Let $x \to S \xrightarrow{\infty_S} \PP^1_S$ be a point. Since the branch locus $\infty_S$ is regular, by \cite[0EYG]{SP}, there is an affine \'etale neighborhood $U_x=\Spec\calO_{U_x}$ of $x$ in $\PP^1_S$ such that, over $U_x$, the Galois cover $f$ is given by $\coprod_{i=1}^{[G:G_{\infty}]}\Spec \calO_{U_x}[T]/(T^w-a_x) \to \Spec \calO_{U_x}$, where $a_x\in \calO_{U_x}$ is a nonzerodivisor such that the intersection $S\times_{\infty_S, \PP^1_S} U_x$ is cut out by $a_x$. Define a substack $V_x:= \pi^{-1}(U_x)$ of $\scrP_S^w$. Then by definition of root stacks, $\pi|_{V_x}$ is given by $ [(\Spec \calO_{U_x}[T]/(T^w-a_x))/G_{\infty}] \to \Spec \calO_{U_x}$ where $G_{\infty}\simeq \mu_w$ acts on $\Spec \calO_{U_x}[T]/(T^w-a_x)$ as $\zeta(T)=\zeta T$, $\zeta\in \mu_w$. 
		Then there are morphisms $f': X \to \scrP_S^w$ such that $f=\pi \circ f'$, which is unique up to isomorphism and one of these $f'$ can be defined as 
		\begin{enumerate}
			\item over $V_x \subset \scrP_S^w$ for some $x \to S \xrightarrow{\infty_S} \PP^1_S$, $f'|_{V_x}$ is given by $\coprod_{i=1}^{[G:G_{\infty}]}\Spec \calO_{U_x}[T]/(T^w-a_x) \to [(\Spec \calO_{U_x}[T]/(T^w-a_x))/G_{\infty}]$,
			\item away from $\widetilde{\infty}_S$, the map $\pi$ is an isomorphism, and $f'$ is defined by $\pi^{-1} \circ f$.
		\end{enumerate}
		All of these morphisms $f'$ differ by automorphisms at $\widetilde{\infty}_S$, so they are all isomorphic to each other by an element of $G_{\infty}$.
		
		For each of the morphisms $f'$, we have the fiber product $X \times_{\scrP_S^w} S$ as in \eqref{eq:comparison}. Then by the universal property of fiber product, the map $t$ in \eqref{eq:comparison} exists which provides a section of $X\times_{\scrP_S^w} S \to S$. So $X\times_{\scrP_S^w} S$ is the trivial $G_{\infty}$-torsor over $S$. Note that we need to find a map $t$ that is a section of $f'$ over $\widetilde{\infty}_S$. So as an $S$-object of the fiber product $X \times_{\scrP_S^w} S$, $t$ can be described as $(P, \Id_S, \Id_{\scrP_S^w})$, where $P$ is considered as an $S$-object of $X$, $\Id_S$ is the identity morphism of $S$ considered as an $S$-object of the factor $S$ of the fiber product, and $\Id_{\scrP_S^w}$ is the identity morphism of $\scrP_S^w$. Therefore when $f'$ is fixed, there is a unique section $t$ fitting into the diagram \eqref{eq:comparison}. If $(f'_1,t_1)$ and $(f'_2, t_2)$ are two choices of the pair of $f'$ and $t$, then there exists an automorphism $\sigma \in G_{\infty}$ of $\scrP_S^w$ such that $f'_1=\sigma \circ f'_2$, and $\sigma$ maps $t_2$ to $t_1$; moreover, the set of all pairs $(f', t)$ forms a $G_{\infty}$-orbit.
		So we see that the fiber over this object $(f, \iota; P)$ in $\CHur_{n-1, \F_q}^{G, G_{\infty}, c}$ forms a trivial $G_{\infty}$-torsor $G_{\infty} \times S \to S$. 
		
		We've shown that $((\Hur_{G,G_{\infty},c}^n)_{\F_q})_{red}$ has a covering by regular schemes as $S$ above, over which the morphism $\CHur_{n-1, \F_q}^{G,G_{\infty},c} \to ((\Hur_{G,G_{\infty},c}^n)_{\F_q})_{red}$ is a trivial $G_{\infty}$-torsor. Thus, for any object $T \to((\Hur_{G,G_{\infty},c}^n)_{\F_q})_{red}$, the fiber $T \times_{((\Hur_{G,G_{\infty},c}^n)_{\F_q})_{red}} \CHur_{n-1, \F_q}^{G,G_{\infty},c}$ over this object is a principal $G_{\infty}$-bundle,
		which proves that
		\[
			[\CHur_{n-1, \F_q}^{G,G_{\infty},c} /G_{\infty}] \overset{\sim}{\longrightarrow} ((\Hur_{G,G_{\infty},c}^n)_{\F_q})_{red}.
		\] 
		 The last claim of the lemma follows by Lemma~\ref{lem:point-counting=red}.
	\end{proof}
	
	Now, we give the proof of \eqref{eq:FF-moment-2}. Let $G_1, G_{1,\infty}, c_1$ and $G_2, G_{2, \infty}, c_2$ be as defined in \S\ref{ss:proof-FF-1}, and retain the notation defined in \eqref{eq:def-X} and \eqref{eq:def-X-delta}.
	Define $Y_i^n$ to be the preimage of $\calX_i^n$ in $\CHur_{n-1, \F_q}^{G_i,G_{i,\infty},c_i}$ under the quotient map proved in Lemma~\ref{lem:Comp-CH-H}. Then by Lemma~\ref{lem:Comp-CH-H} and \cite[Lemma~2.5.1]{Behrend-TraceFormula}, 
	\begin{equation}\label{eq:Beh}
		\# \calX_i^n(\F_q) = \frac{\#Y_i^n(\F_q)}{|G_{i,\infty}|}.
	\end{equation}
	Similarly, define $Y_{i, \geq M}^n$, $Y_{1, \delta}^n$, and $Y_{1, \geq M, \delta}^n$ to be the preimages of $\calX_{i, \geq M}^n$, $\calX_{1, \delta}^n$, and $\calX_{1, \geq M, \delta}^n$ respectively, and we obtain analogs of \eqref{eq:Beh} for these pairs of subschemes of $\CHur_{n-1,\F_q}^{G_i,G_{i,\infty}, c_i}$ and their quotient stacks. In \cite{LL-HS}, Landesman and Levy proved that, for a geometrically irreducible component $Z$ of $\CHur_{n-1, \F_q}^{G, G_{\infty},c}$, $\#Z(\F_q)/q^{n-1}$ can be estimated by a constant with the error term bounded by an explicit formula in terms of $q$ and $n$. We will prove \eqref{eq:FF-moment-2} by applying the argument in the proof of \cite[\S9.0.2]{LL-HS}. When $\Gamma=\Z/2\Z$, there is a similar argument in the proof of \cite[Theorem~5.4.1]{LL-CL}. 
	
	We fix a generator $\zeta$ of $\hat{\Z}(1)^{\times}_{\overline{\F}_q}$, and let $x_1, \ldots,x_{v}$ denote all the conjugacy classes in $c_1$. Let $Z$ be a geometrically irreducible component of $Y_{1, \geq M, \delta}^n$ (defined in \eqref{eq:def-X-delta}).
	Let $(n_1, n_2, \ldots, n_v)$ denote the image of $\zeta$ under the map $$\hat{\Z}(1)_{\overline{\F}_q}^{\times} \overset{\frakz_x}{\longrightarrow} U(G_1,c_1) \longrightarrow \Z^{c_1/G_1} \simeq \Z^v$$ for any geometric point $x$ contained in the image of $Z$ in $\calX_{1, \geq M, \delta}^n$, where the second arrow is the natural projection $U(G_1,c_1) \simeq G_1 \times_{G_1^{\ab}} \Z^{c_1/G_1} \to \Z^{c_1/G_1}$ and $n_j$ is the coordinate corresponding to $x_j$. 
	When $M$ is sufficiently large, since $n_j \geq M$ for all $j$, by applying \cite[Lemma~8.4.5]{LL-HS}, there are constants $C$, $I$, $J$ depending only on $G_1, G_{1, \infty},c_1$ and a constant $\phi_{Z}$ depending on the component $Z$, so that if $q>C^2$, 
	\begin{equation}\label{eq:LL-HS(8.6)}
		\bigg| \frac{\#Z(\F_q)}{q^{n-1}} - \phi_{Z} \bigg| \leq \frac{2C}{1-\frac{C}{\sqrt{q}}} \left( \frac{C}{\sqrt{q}}\right)^{\frac{n-1-J}{I}}.
	\end{equation}
	Let $Z$ vary over all geometrically irreducible components of $Y^n_{1, \geq M, \delta}$. \eqref{eq:LL-HS(8.6)} implies that there is some constant $D_Z \in [-1,1]$ for each $Z$, so that
	\begin{equation}\label{eq:est-Y1->M}
		\# Y^n_{1, \geq M, \delta}(\F_q) = q^{n-1} \sum_{Z} \phi_{Z} \left( 1+D_Z\frac{2C}{1-\frac{C}{\sqrt{q}}} \left( \frac{C}{\sqrt{q}}\right)^{\frac{n-1-J}{I}} \right)  ,
	\end{equation}
	where the sum runs over all geometrically irreducible components $Z$ of $Y^n_{1, \geq M, \delta}$. Note that, as we discussed above \eqref{eq:12.7-pi-2}, using the proof of \cite[Proposition~12.7]{LWZB}, we have that the number of geometrically irreducible components of $Y_{1, \delta}^n \backslash Y_{1, \geq M, \delta}^n$ is $O_{G_1}(n^{d_{G_1,c_1}(q)-2})$. We apply the Grothendieck--Lefschetz trace formula and use \cite[Lemma~8.4.4]{LL-HS} to obtain
	\begin{equation}\label{eq:est-Y1}
		\# Y_{1,\delta}^n(\F_q) - \# Y_{1, \geq M, \delta}^n(\F_q) \leq  O_{G_1}(n^{d_{G_1,c_1}(q)-2}) (q^{n-1} + D_{q,1}q^{n-\frac{3}{2}})
	\end{equation}
	where $D_{q,1}$ is some constant depending on $G_1, c_1$ and $q$ that is bounded as a function of $q$ as $q \to \infty$.
	By the same arguments for \eqref{eq:est-Y1->M} and \eqref{eq:est-Y1}, we obtain
	\begin{equation}\label{eq:est-Y2}
		\# Y^n_{2}(\F_q) - q^{n-1}\sum_{Z'} \phi_{Z'} \left( 1+D'_{Z'}\frac{2C}{1-\frac{C}{\sqrt{q}}} \left( \frac{C}{\sqrt{q}}\right)^{\frac{n-1-J}{I}} \right) 
		 \leq O_{G_2}(n^{d_{G_2,c_2}(q)-2}) (q^{n-1}+D_{q,2} q^{n-\frac{3}{2}}),
	\end{equation}
	where $D'_{Z'}\in [-1,1]$ and the sum runs over all geometrically irreducible components $Z'$ of $Y^n_{2, \geq M}$, and $D_{q,2}$ is bounded as a function of $q$ as $q \to \infty$. Here we pick the constants $C$, $I$, and $J$ such that both \eqref{eq:est-Y1->M} and \eqref{eq:est-Y2} hold.
	
	There is a natural morphism $\beta_n:\CHur_{n-1, \F_q}^{G_1, G_{1, \infty},c_1} \to \CHur_{n-1, \F_q}^{G_2, G_{1,\infty},c_2}$ that sends a $G_1$-cover of $\PP^1$ to a $G_2$-cover of $\PP^1$ defined by the quotient map $G_1 \twoheadrightarrow G_2$. Then by \cite[Lemma~8.4.7]{LL-HS}, $\phi_{Z}=\phi_{\beta_n(Z)}$ for any geometrically irreducible component $Z$ of $Y_{1, \geq M, \delta}^n$. Recall that in \S\ref{ss:proof-FF-1} that we gave estimations of the numbers of the geometrically irreducible components of $\calX_{1,\geq M,\delta}^n$ and $\calX_{2, \geq M}^n$ in terms of $\pi_{G_1, G_{1,\infty}, c_1}^{\delta}(q, n)$ and $\pi_{G_2, G_{2, \infty}, c_2}(q,n)$ respectively. 
	So by \eqref{eq:12.7-pi-2}, \eqref{eq:12.7-pi-1-delta}, \eqref{eq:Beh}, \eqref{eq:est-Y1->M}, \eqref{eq:est-Y1} and \eqref{eq:est-Y2}, we have
	\begin{equation*}
		\lim_{N \to \infty} \frac{\sum\limits_{n \leq N} \#\calX_{1, \delta}^n(\F_q)}{\sum\limits_{n \leq N} \#\calX_2^n(\F_q)} = \lim_{N \to \infty}\frac{\sum\limits_{n \leq N} \#Y_{1, \delta}^n(\F_q)}{\sum\limits_{n \leq N} \#Y_2^n(\F_q)} =1.
	\end{equation*}

\section{Proof of the presentation result: Theorem~\ref{thm:presentation}}\label{sect:presentation}

	In this section, we prove Theorem~\ref{thm:presentation} by applying the methods from \cite{Liu-presentation}. We first recall in \S\ref{sect:recall-presentation} the language of pro-$\calC$ admissible presentations for $\Gamma$-groups established in \cite{LWZB} and \cite{Liu-presentation}. Then we use the formulas in \cite{Liu-presentation} to compute the multiplicities of irreducible factors associated to a pro-$\calC$ admissible presentation for $G_{\O}^{\#}(K)^{\calC}$ in \S\ref{ss:multiplicity}, and finally use those multiplicities to prove Theorem~\ref{thm:presentation} in \S\ref{ss:proof-presentation}.

\subsection{Presentation of pro-$\calC$ admissible $\Gamma$-groups.}\label{sect:recall-presentation}

	We first recall definitions and notation in \cite{LWZB} and \cite{Liu-presentation}. Throughout, we will fix a finite group $\Gamma$. 
	A $\Gamma$-group $G$ is \emph{admissible} if it is topologically generated by the elements $\{g^{-1}\gamma(g) \mid g \in G, \gamma \in \Gamma\}$ and has order prime to $|\Gamma|$. When $K$ is a $\Gamma$-extension of $Q$ for $Q=\Q$ or $\F_q(t)$ with $(q,|\Gamma|)=1$, the Galois group $G_{\O}^{\#}(K)$ is an admissible $\Gamma$-group (\cite[Proposition~2.2]{LWZB}). The \emph{free pro-$|\Gamma|'$ $\Gamma$-group on $n$ generators}, denoted by $F_n$, is defined to be the free pro-$|\Gamma|'$ group on $\{ x_{i,\gamma} \mid i=1, \ldots, n \text{ and } \gamma \in \Gamma\}$, where $\sigma \in \Gamma$ acts on $F_n$ by $\sigma(x_{i,\gamma})=x_{i,\sigma \gamma}$. The \emph{free admissible $\Gamma$-group on $n$ generators}, denoted by $\calF_n$, is defined to be the closed $\Gamma$-subgroup of $F_n$ that is generated by elements $\{x_{i, 1_{\Gamma}}^{-1} \gamma(x_{i, 1_{\Gamma}}) \mid i=1, \cdots, n, \text{ and } \gamma \in \Gamma\}$.
	Note that both $F_n$ and $\calF_n$ depend on the choice of $\Gamma$, but we leave the dependence implicit.
	Given a finitely generated admissible profinite group $G$, we say \emph{a set of elements $S$ admissibly generates $G$} if the closed $\Gamma$-subgroup of $G$ generated by the set $\{g^{-1} \gamma(g) \mid g\in S \text{ and } \gamma \in \Gamma \}$ equals $G$. When there is a finite set of elements that admissibly generates $G$, we say that $G$ is \emph{finitely admissibly generated}.
	
	Let $\calC$ be a set of isomorphism classes of finite $\Gamma$-groups, and $\overline{\calC}$ the smallest set of isomorphism classes of $\Gamma$-groups containing $\calC$ that is closed under taking finite direct products, $\Gamma$-quotients, and $\Gamma$-subgroups. Then, for a $\Gamma$-group $G$, the \emph{pro-$\calC$ completion of $G$} is defined to be 
	\[
		G^{\calC} := \varprojlim_M G/M
	\]
	where the inverse limit runs over all open normal $\Gamma$-subgroups $M$ of $G$ such that $G/M$ is contained in $\overline{\calC}$ as a $\Gamma$-group.
	
	Let $G$ be an admissible $\Gamma$-group that is admissibly generated by $g_1, \ldots, g_n \in G$. There is a $\Gamma$-equivariant surjection
	\begin{eqnarray}
		\pi: F_n &{\longrightarrow} & G \label{eq:ad-C-presentation}\\
		\text{defined by} \quad x_{i,1_{\Gamma}}  &\longmapsto & g_i, \quad \forall i, \gamma, \nonumber
	\end{eqnarray}
	such that the restriction $\pi_{\ad}:=\pi|_{\calF_n}$ is surjective. Taking the pro-$\calC$ completions of both the domain and codomain of $\pi_{\ad}$, we obtain a surjection $\pi_{\ad}^{\calC}: \calF_n^{\calC} \to G^{\calC}$. 
	For a $\Gamma$-equivariant surjection $\rho: H_1 \to H_2$ of $\Gamma$-groups, let $M$ be the intersection of all maximal $H_1\rtimes \Gamma$-subgroups $N$ of $\ker \rho$ such that $\ker \rho/N$ is abelian. Then $\ker \rho/M$ is an abelian $\Gamma$-group and naturally has an action of $H_2 \rtimes \Gamma$ given by the conjugation action of $(H_1/M)\rtimes \Gamma$. For a finite irreducible $H_2 \rtimes \Gamma$-module $A$, the \emph{multiplicity of $A$ associated to $\rho$}, denoted by $m(\rho,H_2,A)$, is defined to be the maximal integer $m$ such that $A^{\oplus m}$ is a $\Gamma$-equivariant quotient of $\ker \rho/M$. For the surjections $\pi$, $\pi_{\ad}$ and $\pi_{\ad}^{\calC}$ described as above, the multiplicities do not depend on the choices of the admissible generators $\{g_1, \ldots, g_n\}$ but depends only on $n$ \cite[Proposition~3.4, Lemma~4.2, Proposition~5.4]{Liu-presentation}, so we define
	\[
		m(n,G,A):=m(\pi,G,A), \quad m_{\ad}(n,G,A):=m(\pi_{\ad},G,A), \quad m_{\ad}^{\calC}(n,G^{\calC},A):=m(\pi_{\ad}^{\calC},G^{\calC},A).
	\]
	(Comparing to the notation used in \cite{Liu-presentation}, here we leave the dependence on $\Gamma$ implicit.) 
	The multiplicities can be bounded by explicit formulas in terms of the cohomology groups for the group $G \rtimes \Gamma$ and the module $A$: see \cite[Lemma~3.4, Corollary~4.5, Proposition~5.4]{Liu-presentation}. 

\subsection{Multiplicities for $G_{\O}^{\infty}(K)^{\calC}$}\label{ss:multiplicity}
	Let $K_{\O}^{\infty}$ be the maximal extension of $K$ that is unramified and split completely at primes above $\infty$, and let $G_{\O}^{\infty}(K)$ be the Galois group of $K_{\O}^{\infty}/K$. 
	By \cite[Proposition~2.2]{LWZB}, the pro-$|\Gamma|'$ completion of $G_{\O}^{\infty}(K)$ is an admissible $\Gamma$-group. 	
	For a set $\calC$ as described in Theorem~\ref{thm:presentation}, we have $G_{\O}^{\#}(K)^{\calC}\simeq G_{\O}^{\infty}(K)^{\calC}$.
	By assumption in Theorem~\ref{thm:presentation}, $G_{\O}^{\infty}(K)^{\calC}$ is an admissibly finitely generated $\Gamma$-group, so we can find a $\Gamma$-equivariant surjection as in \eqref{eq:ad-C-presentation} when $n$ is sufficiently large.
	In this section, we prove the following proposition, which estimates the multiplicity $m_{\ad}^{\calC}(n, G_{\O}^{\#}(K)^{\calC}, A)$. 
	
	For an $\F_{\ell}[G]$-module $M$, we write $h_G(M):= \dim_{\F_{\ell}}\Hom_G(M,M)$.
	
	\begin{proposition}\label{prop:multiplicity-A}
		Retain the assumptions in Theorem~\ref{thm:presentation}. When $n$ is sufficiently large, there exists a $\Gamma$-equivariant surjection $ \calF_n^{\calC} \to G_{\O}^{\#}(K)^{\calC}$ with
		\begin{equation}\label{eq:m-ub}
			m_{\ad}^{\calC}(n, G_{\O}^{\#}(K)^{\calC} , A) \leq \frac{n \dim_{\F_{\ell}} \dfrac{A}{A^{\Gamma}} + \dim_{\F_{\ell}}\dfrac{A^{\Gamma_{\infty}}}{A^{\Gamma}}}{h_{G_{\O}^{\#}(K)^{\calC} \rtimes \Gamma}(A)},
		\end{equation}
		for every finite irreducible $G_{\O}^{\#}(K)^{\calC} \rtimes \Gamma$-module $A$ of exponent $\ell \neq \Char(K)$ relatively prime to $|\mu_K||\Gamma|$, where $\mu_K$ is the group of roots of unity contained in $K$.
	\end{proposition}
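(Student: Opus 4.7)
The plan is to combine the cohomological multiplicity bounds from \cite{Liu-presentation} with an Euler--Poincar\'e / Poitou--Tate estimate of the Galois cohomology of $G_{\O}^{\infty}(K)$ with coefficients in $A$. Since $G_{\O}^{\#}(K)^{\calC}$ is finitely admissibly generated by hypothesis, for all sufficiently large $n$ we can fix a $\Gamma$-equivariant surjection $\pi_{\ad}^{\calC}: \calF_n^{\calC} \twoheadrightarrow G_{\O}^{\#}(K)^{\calC}$. The machinery in \cite[\S5]{Liu-presentation} expresses $m_{\ad}^{\calC}(n, G_{\O}^{\#}(K)^{\calC}, A)$ as a difference of dimensions of $\Hom$ spaces; combining this with the formulas in \cite[Proposition~5.4, Corollary~4.5]{Liu-presentation} reduces the inequality \eqref{eq:m-ub} to an upper bound on $\dim_{\F_{\ell}} H^2\!\bigl(G_{\O}^{\#}(K)^{\calC} \rtimes \Gamma, A\bigr)^{\Gamma}$ and to the standard first-cohomology computation that accounts for the $n \dim(A/A^{\Gamma})$ summand coming from the $n$ admissible generators.

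Next I would transfer the problem from the pro-$\calC$ group to the actual arithmetic Galois group. Condition~\ref{item:presentation-2} forces the exponent $\ell$ of $A$ to be prime to $|\Gamma|$ and to $\Char(K)$, and implies $\mu_{\ell} \not\subset K$. Since $|\Gamma|$ is a unit on $A$, the Hochschild--Serre spectral sequence for $G_{\O}^{\#}(K)^{\calC} \rtimes \Gamma \twoheadrightarrow \Gamma$ collapses to give $H^{i}(G_{\O}^{\#}(K)^{\calC} \rtimes \Gamma, A) \simeq H^{i}(G_{\O}^{\#}(K)^{\calC}, A)^{\Gamma}$ for all $i$; and since $\ell$ is a unit in $\calC$-quotients, the canonical map $H^{i}(G_{\O}^{\infty}(K), A) \to H^{i}(G_{\O}^{\#}(K)^{\calC}, A)$ is an isomorphism for $i=1,2$. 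Thus it suffices to bound $\dim_{\F_{\ell}} H^{2}(G_{\O}^{\infty}(K), A)^{\Gamma}$.

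To do this I would run Poitou--Tate global duality, viewing $G_{\O}^{\infty}(K)$ as the Galois group of the maximal extension of $K$ unramified everywhere and split at $\infty$. Using the hypothesis $\mu_{\ell} \not\subset K$, the Pontryagin dual module $A^{\ast} = \Hom(A, \mu_{\ell})$ has no $G_{K}$-invariants, so the Tate--Shafarevich term $\Sha^{2}(G_{\O}^{\infty}(K), A) \simeq \Sha^{1}(G_{\O}^{\infty}(K), A^{\ast})^{\vee}$ contributes nothing. The global Euler--Poincar\'e formula then reduces $\dim H^{2}$ to $\dim H^{1}$ plus a sum of local Euler characteristics, and since we are forbidding ramification away from $\infty$ and requiring splitting at $\infty$, the only relevant local contribution is a single term $\dim_{\F_\ell} H^{1}(K_{v}, A)^{\text{unr-split}}$ at a place $v \mid \infty$. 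Because the decomposition group of $\Gamma$ at the distinguished place of $K$ above $\infty$ is $\Gamma_{\infty}$, a direct computation of this local cohomology (analogous to the archimedean computation in \cite{LWZB} but now with nontrivial decomposition group $\Gamma_{\infty}$) yields the term $\dim_{\F_{\ell}}(A^{\Gamma_{\infty}}/A^{\Gamma})$ after taking $\Gamma$-invariants.

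The main obstacle will be pinning down this final local calculation precisely and checking that the various duality and Euler-characteristic terms assemble to give exactly \eqref{eq:m-ub} rather than a weaker bound. The delicate points are: (i) verifying that the $\Gamma$-invariants of the local cohomology at the places above $\infty$ give $\dim A^{\Gamma_{\infty}}/A^{\Gamma}$ and not $\dim A^{\Gamma_{\infty}}$, which will follow from the fact that $\Gamma$ permutes the places of $K$ above $\infty$ transitively with stabilizer $\Gamma_{\infty}$, so the induced $\Gamma$-module at $\infty$ is $\Ind_{\Gamma_{\infty}}^{\Gamma} A^{\Gamma_{\infty}}$ and Frobenius reciprocity produces the quotient by $A^{\Gamma}$; and (ii) handling the function field case uniformly with the number field case, where one uses the analogous global duality for $\F_{q}(t)$ and absorbs the $\Char(K)$-part via the coprimality hypothesis on $\ell$.
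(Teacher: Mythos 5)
The overall strategy (translate the multiplicity bound from \cite{Liu-presentation} into a Galois cohomology estimate, then control a local term at $\infty$ that produces $\dim A^{\Gamma_{\infty}}/A^{\Gamma}$) is the right shape, and your intuition about where the new term comes from is sound. But there is a genuine gap at the transfer step: you assert that ``the canonical map $H^{i}(G_{\O}^{\infty}(K), A) \to H^{i}(G_{\O}^{\#}(K)^{\calC}, A)$ is an isomorphism for $i=1,2$.'' This is not justified and is not true in general. Writing $N := \ker\bigl(G_{\O}^{\infty}(K)\twoheadrightarrow G_{\O}^{\#}(K)^{\calC}\bigr)$, the Hochschild--Serre five-term sequence gives
\[
H^1(G_{\O}^{\infty}(K),A) \to H^1(N,A)^{G_{\O}^{\#}(K)^{\calC}} \to H^2(G_{\O}^{\#}(K)^{\calC},A) \to H^2(G_{\O}^{\infty}(K),A),
\]
and $H^1(N,A)=\Hom(N,A)$ does not vanish in general: $\ell$ is allowed to divide orders of groups in $\calC$, so $N$ can have plenty of $\ell$-torsion quotients, and nothing forces the middle term to vanish. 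The paper sidesteps this entirely by constructing an auxiliary group $G$ (the maximal quotient of $G_{\O}^{\infty}(K)$ built from $G_{\O}^{\#}(K)^{\calC}$ by iterated $A$-extensions), which by design satisfies $G^{\calC}\simeq G_{\O}^{\infty}(K)^{\calC}$ and whose cohomological defect is bounded above by $\delta_{K/Q,\O,\infty}(A)$ --- an \emph{inequality}, not an isomorphism. That auxiliary construction is the crux of the argument and is missing from your proposal.

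Two smaller imprecisions are worth flagging. First, Poitou--Tate and the Euler--Poincar\'e formula apply to $G_{K,T}$ for a finite set of places $T$, which allows ramification in $T$; they do not directly apply to $G_{\O}^{\infty}(K)$, which imposes both unramifiedness everywhere and splitting at $\infty$. The paper handles this with Lemma~\ref{lem:G-infty}, which uses the exact sequence $1 \to \Gal(K_{\O}/K_{\O}^{\infty}) \to G_{\O}(K) \to G_{\O}^{\infty}(K) \to 1$ and bounds the extra $H^1$ contribution by $\dim A^{\Gamma_{\infty}}$. Second, your Frobenius-reciprocity computation of the local term is slightly off: $(\Ind_{\Gamma_{\infty}}^{\Gamma} A^{\Gamma_{\infty}})^{\Gamma} \cong A^{\Gamma_{\infty}}$, not $A^{\Gamma_{\infty}}/A^{\Gamma}$. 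In the paper the subtraction of $A^{\Gamma}$ enters separately through the $-\dim(A^{\Gamma}/A^{G\rtimes\Gamma})$ term in the multiplicity formula from \cite{Liu-presentation}, not from the local cohomology at $\infty$ itself. You would need to rework the bookkeeping to see the exact cancellation.
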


	This proposition is analogous to Proposition~10.7 of \cite{Liu-presentation} which deals with the totally real case, i.e. $\Gamma_{\infty}=1$. 
	
	Note that it is an open question to determine whether $G_{\O}^{\infty}(K)$ is finitely generated. Instead of directly studying $G_{\O}^{\infty}(K)$, we construct a $\Gamma$-group $G$ that is the maximal extension of $G_{\O}^{\infty}(K)^{\calC}$ that can be obtained via a sequence of group extensions with kernel $A$ (see \cite[\S10.1]{Liu-presentation} for details). By arguments analogous to \cite[Lemmas~10.2 and 10.3]{Liu-presentation}, we have the following properties of $G$:
	\begin{enumerate}[label=(\roman*)]
		\item\label{item:10.7-1} $G$ is a quotient of $G^{\infty}_{\O}(K)$ and $G^{\calC} \simeq G^{\infty}_{\O}(K)^{\calC}$,
		\item\label{item:10.7-2} a subset of $G$ is a generator set if and only if its image in $G^{\infty}_{\O}(K)^{\calC}$ generates $G^{\infty}_{\O}(K)^{\calC}$, and 
		\item\label{item:10.7-3} $\dim_{\F_{\ell}}H^2(G,A)^{\Gamma}-\dim_{\F_{\ell}} H^1(G,A)^{\Gamma} \leq \delta_{K/Q, \O, \infty}(A)$, where 
		\[
			\delta_{K/Q, \O, \infty}(A):=\dim_{\F_{\ell}}H^2(G^{\infty}_{\O}(K),A)^{\Gal(K/Q)}-\dim_{\F_{\ell}} H^1(G^{\infty}_{\O}(K),A)^{\Gal(K/Q)}.
		\]
	\end{enumerate}
	By \ref{item:10.7-2}, the assumption that $G_{\O}^{\infty}(K)^{\calC}$ is admissibly finitely generated implies that $G$ is admissibly finitely generated. So when $n$ is sufficiently large, the map \eqref{eq:ad-C-presentation} exists. Then we need to compute $\delta_{K/Q, \O, \infty}(A)$.
	
	\begin{lemma}\label{lem:G-infty}
		Retain the notation above, and define
		\[
			\delta_{K/Q, \O}(A):=\dim_{\F_{\ell}}H^2(G_{\O}(K),A)^{\Gal(K/Q)}-\dim_{\F_{\ell}} H^1(G_{\O}(K),A)^{\Gal(K/Q)}.
		\]
		When $Q=\Q$, $\delta_{K/Q, \O, \infty}(A) = \delta_{K/Q, \O}(A)$; when $Q=\F_q(t)$,
		\[
			\delta_{K/Q, \O, \infty}(A) \leq \delta_{K/Q, \O}(A)+\dim_{\F_{\ell}} A^{\Gamma_{\infty}}.
		\]
	\end{lemma}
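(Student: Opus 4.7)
The plan is as follows. For $Q = \Q$, the result is essentially trivial: the hypothesis of Theorem~\ref{thm:presentation} forces $|\Gamma_{\infty}| = 2$, so the distinguished archimedean place of $K$ is complex, and since $K/\Q$ is Galois every archimedean place of $K$ is complex; unramified extensions of $K$ automatically split completely at complex places, so $K_{\O} = K_{\O}^{\infty}$ and therefore $G_{\O}(K) = G_{\O}^{\infty}(K)$, which immediately gives $\delta_{K/Q,\O,\infty}(A) = \delta_{K/Q,\O}(A)$.

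For $Q = \F_q(t)$ I would analyze the short exact sequence $1 \to N \to G_{\O}(K) \to G_{\O}^{\infty}(K) \to 1$, where $N := \Gal(K_{\O}/K_{\O}^{\infty})$, via the Hochschild--Serre spectral sequence. The module $A$ is a $G_{\O}^{\infty}(K)\rtimes\Gamma$-module and hence naturally a $G_{\O}(K)\rtimes\Gamma$-module on which $N$ acts trivially, so $A^N = A$. The five-term exact sequence then reads
\begin{equation*}
0 \to H^1(G_{\O}^{\infty}(K), A) \to H^1(G_{\O}(K), A) \to H^1(N, A)^{G_{\O}^{\infty}(K)} \to H^2(G_{\O}^{\infty}(K), A) \to H^2(G_{\O}(K), A).
\end{equation*}
Since $\ell \nmid |\Gamma|$, the functor of $\Gamma$-invariants is exact on $\F_{\ell}[\Gamma]$-modules, so I may take $\Gamma$-invariants throughout. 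Dimension counting across the resulting exact sequence (the last map need not be surjective, which is what produces an inequality rather than an equality) yields
\begin{equation*}
\delta_{K/Q,\O,\infty}(A) \leq \delta_{K/Q,\O}(A) + \dim_{\F_\ell} H^1(N,A)^{G_{\O}^{\infty}(K) \rtimes \Gamma}.
\end{equation*}

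It remains to bound the last term by $\dim_{\F_\ell} A^{\Gamma_{\infty}}$. Because $N$ acts trivially on $A$, we have $H^1(N, A) = \Hom(N^{\ab}, A)$ and the invariants reduce to $\Hom_{G_{\O}^{\infty}(K) \rtimes \Gamma}(N^{\ab}, A)$. The key input is the structure of $N$ as a normal subgroup of $G_{\O}(K)$: it is the closed normal subgroup generated by the decomposition groups $D_{\tilde{\frakp}}$ at primes $\tilde{\frakp}$ of $K_{\O}$ above $\infty$, each of which is pro-cyclic (since $K_{\O}/K$ is unramified) and generated by a Frobenius element. Using the distinguished place of $\overline{Q}$ above $\infty$, I fix $\tilde{\frakp}_0$ with Frobenius $\sigma_0$. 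Since $\Gal(K_{\O}/Q) = G_{\O}(K) \rtimes \Gamma$ acts transitively on primes of $K_{\O}$ above $\infty$, the $G_{\O}^{\infty}(K) \rtimes \Gamma$-conjugates of (the image of) $\sigma_0$ generate $N^{\ab}$.

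The main point, and the main obstacle, is to identify the stabilizer of $\sigma_0$ in $G_{\O}^{\infty}(K) \rtimes \Gamma$ with $\Gamma_{\infty}$, whence there is a surjection $\Zhat\bigl[(G_{\O}^{\infty}(K) \rtimes \Gamma)/\Gamma_{\infty}\bigr] \twoheadrightarrow N^{\ab}$ of $G_{\O}^{\infty}(K)\rtimes\Gamma$-modules, and Frobenius reciprocity yields the desired injection $\Hom_{G_{\O}^{\infty}(K) \rtimes \Gamma}(N^{\ab}, A) \hookrightarrow A^{\Gamma_{\infty}}$. For this identification one observes that the decomposition group of $\tilde{\frakp}_0$ in $\Gal(K_{\O}/Q)$ is an extension of $\Gamma_{\infty}$ (the decomposition group at the place of $K$ below $\tilde{\frakp}_0$) by $D_{\tilde{\frakp}_0} \subset N$, so its image in $G_{\O}^{\infty}(K) \rtimes \Gamma$ is precisely $\Gamma_{\infty}$ after fixing the distinguished splitting $\Gamma \hookrightarrow \Gal(K_{\O}^{\#}/Q)$ used throughout the paper; one must also check that conjugation by elements of $D_{\tilde{\frakp}_0}$ itself does not enlarge the stabilizer, which is automatic since $D_{\tilde{\frakp}_0}$ is abelian and $\sigma_0$ generates it.
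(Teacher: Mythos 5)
Your proposal is correct and follows essentially the same route as the paper: the five-term Hochschild--Serre sequence for $1 \to N \to G_{\O}(K) \to G_{\O}^{\infty}(K) \to 1$, exactness of $\Gamma$-invariants since $\ell \nmid |\Gamma|$, and the observation that $N$ is generated by $\Gal(K_{\O}/Q)$-conjugates of a single Frobenius $\sigma_0$ whose centralizer contains $\Gamma_{\infty}$ (so equivariant homomorphisms $N^{\ab}\to A$ are determined by $\sigma_0 \mapsto a \in A^{\Gamma_{\infty}}$). The only differences are cosmetic: the paper directly deduces that the image of $\sigma_{\frakP}$ must lie in $A^{\Gamma_{\infty}}$ rather than invoking Frobenius reciprocity, and your final sanity-check that the stabilizer of $\sigma_0$ is not \emph{larger} than $\Gamma_{\infty}$ is unnecessary (only the containment $\Gamma_{\infty}\subseteq\operatorname{Stab}(\sigma_0)$ is used for the bound).
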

	
	\begin{proof}
		When $Q=\Q$, an extension being unramified at an archimedean prime implies being split completely at that archimedean prime, so $G_{\O}(K)=G_{\O}^{\infty}(K)$. For the rest of the proof, we assume $Q=\F_q(t)$.
		The exact sequence $1 \to \Gal(K_{\O}/K_{\O}^{\infty}) \to G_{\O}(K) \to G_{\O}^{\infty}(K) \to 1$ induces the exact sequence 
		\begin{eqnarray}
			H^1(G_{\O}^{\infty}(K), A) &\hookrightarrow& H^1(G_{\O}(K), A) \rightarrow H^1(\Gal(K_{\O}/K_{\O}^{\infty}), A)^{G_{\O}^{\infty}(K)} \nonumber \\
			&\rightarrow& H^2(G_{\O}^{\infty}(K), A) \rightarrow H^2(G_{\O}(K), A). \label{eq:lese-ST}
		\end{eqnarray}
		The conjugation by $\Gamma=\Gal(K/Q)$ acts on each cohomology group, so the above sequence is an exact sequence of $\F_{\ell}[\Gamma]$-modules. Because $\ell \nmid |\Gamma|$, taking $\Gamma$-invariants is an exact functor on the category of $\F_{\ell}[\Gamma]$-modules, so we take $\Gamma$-invariants on \eqref{eq:lese-ST} and obtain
		\[
			\delta_{K/Q, \O, \infty}(A) \leq \delta_{K/Q, \O}(A) +\dim_{\F_{\ell}} H^1(\Gal(K_{\O}/K_{\O}^{\infty}), A)^{\Gal(K_{\O}^{\infty}/Q)}.
		\]
		Note that $\Gal(K_{\O}/K_{\O}^{\infty})$ is topologically generated by the decomposition subgroups at all primes of $K_{\O}$ lying above $\infty$, and $\Gal(K_{\O}/Q)$ acts transitively on these primes. Pick a prime $\frakP$ of $K_{\O}$ lying above the distinguished prime of $K$ above $\infty$ (recall that the inertia subgroup of $K/Q$ at the distinguished prime is $\Gamma_{\infty}$), and let $\sigma_{\frakP}$ be a generator of the decomposition subgroup of $K_{\O}/K_{\O}^{\infty}$ at $\frakP$. So $\Gal(K_{\O}/K_{\O}^{\infty})$ is generated by all of the Galois conjugates of $\sigma_{\frakP}$ by $\Gal(K_{\O}/Q)$. Then elements in
		\[
			H^1(\Gal(K_{\O}/K_{\O}^{\infty}), A)^{\Gal(K_{\O}^{\infty}/Q)} \simeq \Hom_{\Gal(K_{\infty}/Q)}(\Gal(K_{\O}/K_{\O}^{\infty}), A)
		\]
		are completely determined by the image of $\sigma_{\frakP}$ in $A$. Since the inertia subgroup $\Gamma_{\infty}$ acts trivially on the Frobenius element $\sigma_{\frakP}$, the image of $\sigma_{\frakP}$ under a $\Gal(K_{\infty}/Q)$-equivariant morphism to $A$ must be contained in $A^{\Gamma_{\infty}}$, so the  inequality in the lemma follows.
	\end{proof}
	
	Now we give the proof of Proposition~\ref{prop:multiplicity-A}.
	
	\begin{proof}[Proof of Proposition~\ref{prop:multiplicity-A}]
		By \ref{item:10.7-2}, $G$ is admissibly finitely generated, so when $n$ is sufficiently large, there is a $\Gamma$-equivariant surjection $\pi:F_n \to G$ as in \eqref{eq:ad-C-presentation} such that $\pi_{\ad}:=\pi|_{\calF_n}$ is surjective. By taking pro-$\calC$ completions, we obtain the $\Gamma$-equivariant surjection
		\[
			\pi_{\ad}^{\calC}: \calF_n^{\calC} \longrightarrow G^{\calC} \simeq G_{\O}^{\infty}(K)^{\calC}.
		\] 
		By \cite[Lemma~3.4, Corollary~4.5, Proposition~5.4]{Liu-presentation} and \ref{item:10.7-3}, 
		\begin{equation}\label{eq:m-ad-C-1}
			m_{\ad}^{\calC}(n,G_{\O}^{\infty}(K)^{\calC},A) \leq \frac{n \dim_{\F_{\ell}} \dfrac{A}{A^{\Gamma}}-\dim_{\F_{\ell}} \dfrac{A^{\Gamma}}{A^{G\rtimes \Gamma}} + \delta_{K/Q,\O, \infty}(A)}{h_{G\rtimes \Gamma}(A)}.
		\end{equation}
		Since $\ell$ is coprime to $\mu_K$ and $K_{\O}^{\#}/K$ is split completely at every prime over $\infty$, the action of $\Gal(\overline{Q}/Q)$ on the module $\mu_{\ell}$ does not factor through $\Gal(K_{\O}^{\#}/Q)$, so $A$ cannot be $\mu_{\ell}$. 
		
		When $Q=\Q$, by Lemma~\ref{lem:G-infty}, $\delta_{K/Q,\O,\infty}(A)=\delta_{K/Q,\O}(A)$, which can be estimated by \cite[Proposition~9.4]{Liu-presentation}. Let $T=S_{\ell}(K) \cup S_{\infty}(K)$ (i.e. $T$ is the set of all primes of $K$ lying above $\ell$ and $\infty$). Since $\mu_2 \subset K$, the assumption on $\ell$ implies that $\ell$ is odd. 
		By the Euler--Poincar\'e characteristic formula \cite[Theorem~7.1]{Liu-presentation},
		\[
			\log_{\ell}\chi_{K/\Q, T}(A)=-\dim_{\F_{\ell}}H^0(\R, \Hom(A,\mu_{\ell}))=-\dim_{\F_{\ell}}\frac{A}{A^{\Gamma_{\infty}}},
		\]
		where the first equality uses $\widehat{H}^0(\R,\Hom(A,\mu_{\ell}))=0$ (which follows by \cite[(1.6.2)(a)]{NSW}) and the second equality uses the facts that $\Gal(\C/\R)$ acts as taking inverses on $\mu_{\ell}$ and that $A\simeq A^{\Gamma_{\infty}}\oplus \frac{A}{A^{\Gamma_{\infty}}}$ as $\Gal(\C/\R)\simeq \Gamma_{\infty}$-modules. Combining all results above and applying  \cite[Proposition~9.4]{Liu-presentation}, we have 
		\[
			\delta_{K/\Q,\O}(A) \leq \dim_{\F_{\ell}}\frac{A^{\Gamma_{\infty}}}{A^{G\rtimes \Gamma}},
		\]
		then the desired inequality follows by \eqref{eq:m-ad-C-1}.
		
		When $Q=\F_q(t)$, by \cite[Proposition~9.3]{Liu-presentation}, $\delta_{K/\F_q(t),\O}(A)$ is $-1$ if $A=\F_{\ell}$, and is $0$ otherwise, so the desired result follows from Lemma~\ref{lem:G-infty}.
	\end{proof}

\subsection{Proof of Theorem~\ref{thm:presentation}}\label{ss:proof-presentation}
	We recall the definition of $Y(G)$ in \cite{LWZB}. For any $|\Gamma|'$-$\Gamma$-group $G$, we define 
	\begin{eqnarray*}
		Y: G &\longrightarrow& \prod_{\gamma \in \Gamma} G \\
		g &\longmapsto& \prod_{\gamma \in \Gamma} g^{-1}\gamma(g).
	\end{eqnarray*}
	One can read about properties of $Y(G)$ in \cite[\S3.1]{LWZB}. By a slight abuse of notation, for a subset $S$ of elements of $G$, we write $Y(S)$ for the set consisting of  the coordinates of all of the $Y(r)$ for $r \in S$.
	
	\begin{lemma}\label{lem:prob-module}
		Suppose
		\[
			1 \longrightarrow R \longrightarrow F \longrightarrow H \longrightarrow 1
		\]
		is an exact sequence of $\Gamma$-groups such that $R \simeq A^{\oplus m}$ for some finite irreducible $\F_{\ell}[H\rtimes \Gamma]$-module $A$. For $(r_1, \ldots r_{n+1})$ chosen uniformly from $R^n \times R^{\Gamma_{\infty}}$,
		\[
			\Prob([Y(\{ r_1, \ldots, r_{n+1}\})]_{F \rtimes \Gamma} = R) = \prod_{i=0}^{m-1} \left( 1- \frac{\ell^{ h_{H\rtimes \Gamma}(A) \cdot i} |A^{\Gamma}|^{n+1}}{|A|^n |A^{\Gamma_{\infty}}|} \right).
		\]
	\end{lemma}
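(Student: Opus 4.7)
The plan is to translate the stated probability into the probability that a random $D$-linear map of finite $D$-vector spaces is injective, where $D := \End_{H\rtimes\Gamma}(A)$ is the finite field provided by Schur's lemma for the irreducible $H\rtimes\Gamma$-module $A$. Since $R \cong A^{\oplus m}$ is abelian, the conjugation action of $F\rtimes\Gamma$ on $R$ factors through $H\rtimes\Gamma$, and the normal closure $[Y(\{r_1,\ldots,r_{n+1}\})]_{F\rtimes\Gamma}$ coincides with the $H\rtimes\Gamma$-submodule $N \subseteq R$ generated (additively) by the elements $(\gamma-1)r_i$ for all $i$ and all $\gamma \in \Gamma$; the identity $\delta\cdot(\gamma-1)r_i = (\delta\gamma-1)r_i - (\delta-1)r_i$ shows the $\Gamma$-orbit of generators accounts for the outer $\Gamma$-action, so $N$ really is spanned by the raw coordinates of $Y(r_i)$. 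Using the correspondence between maximal $H\rtimes\Gamma$-submodules of $R$ and nonzero elements of $\Hom_{H\rtimes\Gamma}(R,A)$ modulo $D^{\times}$, the event $N=R$ is equivalent to requiring that for every nonzero $\phi \in \Hom_{H\rtimes\Gamma}(R,A)$, some $\phi(r_i) \notin A^\Gamma$, since $\phi((\gamma-1)r_i) = (\gamma-1)\phi(r_i)$ vanishes for all $\gamma$ precisely when $\phi(r_i) \in A^\Gamma$.

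Next I will dualize. Fix an identification $R \cong A^{\oplus m}$ and write $r_i = (a_{i,1},\ldots,a_{i,m})$. A nonzero $\phi$ has the shape $\phi(a_1,\ldots,a_m) = \sum_j d_j a_j$ for some $0 \neq (d_j) \in D^m$, and $\phi(r_i) \in A^\Gamma$ becomes $\sum_j d_j \bar a_{i,j} = 0$ in $A/A^\Gamma$, where the bar denotes reduction modulo $A^\Gamma$. Because elements of $D$ are $\Gamma$-equivariant, they preserve $A^\Gamma$ and $A^{\Gamma_\infty}$, making $A/A^\Gamma$ and $A^{\Gamma_\infty}/A^\Gamma$ into $D$-vector spaces on which every nonzero $d \in D$ acts invertibly. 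Hence the event in question is exactly that the $D$-linear map
\[
L \colon D^m \longrightarrow M := (A/A^\Gamma)^{n} \oplus (A^{\Gamma_\infty}/A^\Gamma), \qquad e_j \longmapsto (\bar a_{1,j},\ldots,\bar a_{n,j},\bar a_{n+1,j}),
\]
is injective. The uniformity of $r_i$ in $R = A^{\oplus m}$ for $i \leq n$ and of $r_{n+1}$ in $R^{\Gamma_\infty} = (A^{\Gamma_\infty})^{\oplus m}$, together with the fact that the relevant quotient maps have constant fiber size, implies that the coordinates $\bar a_{i,j}$ are jointly independent, with $\bar a_{i,j}$ uniform in $A/A^\Gamma$ for $i \leq n$ and uniform in $A^{\Gamma_\infty}/A^\Gamma$ for $i = n+1$. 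In particular, the $m$ images $L(e_1),\ldots,L(e_m)$ are i.i.d.\ uniform in $M$.

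Finally I will invoke the standard formula: the probability that $m$ i.i.d.\ uniform vectors in a finite $D$-vector space $M$ are $D$-linearly independent equals $\prod_{i=0}^{m-1}\bigl(1 - |D|^i/|M|\bigr)$, obtained by conditioning successively on the $(i+1)$-st vector lying outside the $|D|^i$-element span of the previous $i$. Substituting $|D| = \ell^{h_{H\rtimes\Gamma}(A)}$ together with
\[
|M| = |A/A^\Gamma|^{n} \cdot |A^{\Gamma_\infty}/A^\Gamma| = \frac{|A|^{n}\,|A^{\Gamma_\infty}|}{|A^\Gamma|^{n+1}}
\]
turns this into the product in the lemma. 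The main subtlety I anticipate lies in the initial translation step: one must verify that the normal closure under $F\rtimes\Gamma$ collapses to the $H\rtimes\Gamma$-submodule generated by the $(\gamma-1)r_i$ (using abelianness of $R$ and the identity above), and that quotienting by $A^\Gamma$ preserves enough $D$-structure to guarantee that every nonzero $(d_j) \in D^m$ produces a genuinely nonzero linear constraint on each $\bar r_i$ — both of which ultimately rest on the irreducibility of $A$ and Schur's lemma.
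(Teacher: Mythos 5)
Your proof is correct, and it takes a genuinely different (though ultimately equivalent) route from the paper's. The paper follows the template of \cite[Proposition~4.3]{LWZB}: it conditions on the submodule generated by $Y(\{r_1,\ldots,r_{n+1}\})$ surjecting onto the first $m-1$ coordinate factors of $A^{\oplus m}$, with specified images, counts the $|\Hom_{H\rtimes\Gamma}(A,A)|^{m-1}$ maps $A^{\oplus(m-1)}\to A$ that could force a dependency in the last factor, and multiplies the resulting conditional probabilities from $1$ to $m$. You instead dualize once: using semisimplicity of $R$ and Schur's lemma to identify the failure event with the existence of a nonzero $\phi\in\Hom_{H\rtimes\Gamma}(R,A)\cong D^m$ killing all $(\gamma-1)r_i$, you convert the problem in one step into the injectivity of the random $D$-linear map $D^m\to M=(A/A^\Gamma)^n\oplus(A^{\Gamma_\infty}/A^\Gamma)$, whose columns are i.i.d.\ uniform in $M$, and then invoke the standard product formula for $m$ i.i.d.\ uniform vectors in a finite $D$-vector space being linearly independent. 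The underlying arithmetic is the same (the ``peel one factor at a time'' argument is just hidden inside that standard formula), but your framing localizes all the combinatorics into a single clean linear-algebra fact and makes the roles of $A^\Gamma$, $A^{\Gamma_\infty}$, and $D=\End_{H\rtimes\Gamma}(A)$ more transparent; the paper's version stays closer to the group-theoretic language already used in \cite{LWZB} and therefore minimizes the new notation introduced. Your verification that the coordinates $\bar a_{i,j}$ are jointly independent and uniform, and that $|M|=|A|^n|A^{\Gamma_\infty}|/|A^\Gamma|^{n+1}$, $|D|=\ell^{h_{H\rtimes\Gamma}(A)}$, correctly reproduces the stated product.
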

		
	\begin{proof}
		The proof is similar to \cite[Proposition~4.3]{LWZB}. Elements in $Y(\{r_1, \ldots, r_{n+1}\})$ generate $A^{\oplus m}$ if and only if they generate the projection onto the first $m-1$ factors $A^{\oplus (m-1)}$ and further that the projection of the subgroup of $A^{\oplus m}$ they generate onto the last factor of $A$ does not factor through the projection onto the first $m-1$ factors. 
		
		Conditionally on $[Y(\{r_1, \ldots, r_{n+1}\})]_{F\rtimes \Gamma}$ surjecting onto the first $m-1$ factors $A^{\oplus (m-1)}$ with a specific choice of the image of each $Y(r_i)$ in $A^{\oplus (m-1)}$, the number of choices of the image of $Y(r_{n+1})$ in the last factor $A$ is
		\[
			\frac{\#\left\{x \in A^{\Gamma_{\infty}}  \right\} }{ \# \left\{ x\in A^{\Gamma_{\infty}} \mid Y(x)=1\right\}}  = \frac{|A^{\Gamma_{\infty}}|}{|A^{\Gamma}|},
		\]
		and the number of choices of the image of $Y(r_i)$ for $i\leq n$ in the last factor $A$ is $|A|/|A^{\Gamma}|$.
		 The number of homomorphisms from the first $m-1$ factors $A^{\oplus (m-1)}$ to the last factor $A$ is $\#\Hom_{H\rtimes \Gamma}(A^{\oplus (m-1)}, A)=\#\Hom_{H\rtimes \Gamma}(A, A)^{m-1}$. So the probability that $Y(\{r_1, \ldots, r_{n+1}\})$ generates $A^{\oplus m}$ given that the projection of $Y(\{r_1, \ldots, r_{n+1}\})$ generates $A^{\oplus (m-1)}$ is 
		\[
			1-\#\Hom_{H\rtimes \Gamma}(A,A)^{m-1}\frac{|A^{\Gamma}|^{n+1}}{|A|^n |A^{\Gamma_{\infty}}|}=1-\frac{\ell^{h_{H\rtimes \Gamma}(A)\cdot (m-1)}|A^{\Gamma}|^{n+1}}{|A|^n |A^{\Gamma_{\infty}}|}.
		\]
		Finally, by taking the product of all of these conditional probabilities from $1$ to $m$, we obtain the formula in the lemma.
	\end{proof}

	By Proposition~\ref{prop:multiplicity-A}, there exists a $\Gamma$-equivariant surjection 
	\[
		\pi_{\ad}^{\calC}: \calF_n^{\calC} \to G_{\O}^{\#}(K)^{\calC}
	\]
	such that $m_{\ad}^{\calC}(n, G_{\O}^{\#}(K)^{\calC}, A)$ has an upper bound given in \eqref{eq:m-ub} for any $A$ with order prime to $\Char(K)$ and $|\mu_K||\Gamma|$. Define $M$ to be the intersection of all maximal proper $\calF_n^{\calC} \rtimes \Gamma$ -normal subgroups of $\ker \pi_{\ad}^{\calC}$, and define $R:=\ker \pi_{\ad}^{\calC}/M$ and $F:=\calF_{n}^{\calC}/M$. Then we obtain a short exact sequence of $\Gamma$-groups
	$1 \to R \to F \to G_{\O}^{\#}(K)^{\calC} \to 1$, and then $R$ is a direct product of irreducible $G_{\O}^{\#}(K)^{\calC} \rtimes \Gamma$-modules (because $F$ is pro-odd by our assumptions on $\calC$). So 
	\[
		R \simeq \prod_{A} R_A, \quad \text{with  } R_A:=A^{\oplus m_{\ad}^{\calC}(n, G_{\O}^{\#}(K)^{\calC} , A)}
	\]
	where the direct product runs through all irreducible modules $A$. By Proposition~\ref{prop:multiplicity-A} and Lemma~\ref{lem:prob-module}, for each $A$, there exists $(\overline{r}_{A,1}, \ldots, \overline{r}_{A,n+1}) \in R_A^n \times R_A^{\Gamma_{\infty}}$ such that $[Y(\{\overline{r}_{A,1}, \ldots, \overline{r}_{A, n+1}\})]_{F\rtimes \Gamma}=R_A$. Finally because all the $A$ are pairwise nonisomorphic modules, by taking $\overline{r}_i:=\prod_A \overline{r}_{A,i} \in R$, we obtain the elements $\overline{r}_i$ such that $[Y(\{\overline{r}_{1}, \ldots, \overline{r}_{n+1}\})]_{F\rtimes \Gamma}=R$. For each $i$, pick an $r_i$ in $\ker \pi_{\ad}^{\calC}$ that is mapped to $\overline{r}_i$ under the quotient map $\ker\pi_{\ad}^{\calC} \to R$. Furthermore, we can require $r_{n+1}$ to be fixed by $\Gamma_{\infty}$ because $(\ker \pi_{\ad}^{\calC})^{\Gamma_{\infty}} \to R^{\Gamma_{\infty}}$ is surjective (as $|\Gamma_{\infty}|$ is prime to $|\ker \pi_{\ad}^{\calC}|$, taking $\Gamma_{\infty}$-invariants on $\ker \pi_{\ad}^{\calC} \to R$ preserves the surjectivity).
	Then $[Y(\{r_1, \ldots, r_{n+1}\})]_{\calF_n^{\calC} \rtimes \Gamma}=\ker \pi_{\ad}^{\calC}$, which proves the desired result in Theorem~\ref{thm:presentation}.

\section{Construction, probability and moment of the random group model}\label{sect:random-group}
	
	Let $\calP$ be the set of isomorphism classes of admissible $\Gamma$-groups $H$ such that $H^{\calC}$ is finite for all finite sets $\calC$ of finite $\Gamma$-groups whose orders are prime to $2|\Gamma|$.
	For a finite $(2|\Gamma|)'$-$\Gamma$-group $H$ and a set $\calC$ of isomorphism classes of finite $(2|\Gamma|)'$-$\Gamma$-groups, define
	\[
		V_{\calC, H} :=\left\{ X \in \calP \mid X^{\calC} \simeq H^{\calC} \text{ as $\Gamma$-groups} \right\}.
	\]
	When $\calC$ is a finite set, we define $U_{\calC, H}:=V_{\calC, H}$.
	 We define a topology on $\calP$ in which the basic opens are $U_{\calC,H}$
		for each finite set $\calC$ of finite $(2|\Gamma|)'$-$\Gamma$-groups and for each finite $(2|\Gamma|)'$-$\Gamma$-group $H$. Note that, for an arbitrary set $\calC$, $V_{\calC, H}$ is a Borel set in the topological space $\calP$ because $V_{\calC, H} = \bigcap_{\calD} U_{\calD, H}$ where $\calD$ runs over all finite subsets of $\calC$.
		
	\begin{definition}\label{def:RandomGroup}
		For each positive integer $n$, define $\calF_{n}^{\odd}$ to be the pro-odd completion of the free admissible $\Gamma$-group $\calF_n$ defined in \S\ref{sect:recall-presentation}. We define the random $\Gamma$-group
		\[
			X_{\Gamma, \Gamma_{\infty}}^n := \faktor{\calF_{n}^{\odd}}{[x_i^{-1} \gamma(x_i)]_{\gamma \in \Gamma, i=1, \ldots, n+1}},
		\]
		where each of $x_i$, $i=1, \ldots, n$ is chosen randomly with respect to the Haar measure of $\calF_n^{\odd}$ and $x_{n+1}$ is chosen randomly with respect to the Haar measure of $(\calF_n^{\odd})^{\Gamma_{\infty}}$. Define $\mu_{\Gamma, \Gamma_{\infty}}^n$ and $\mu_{\Gamma, \Gamma_{\infty}}$ to be the probability measures on the $\sigma$-algebra of Borel sets of $\calP$ defined by the formulas on basic opens
		\[
			\mu_{\Gamma, \Gamma_{\infty}}^n(U_{\calC, H}):= \Prob \left( (X_{\Gamma, \Gamma_{\infty}}^n)^{\calC} \simeq_{\Gamma} H^{\calC} \right) \quad \text{and}\quad \mu_{\Gamma, \Gamma_{\infty}}(U_{\calC, H}):= \lim_{n \to \infty} \mu_{\Gamma,\Gamma_{\infty}}^n(U_{\calC,H}).
		\]
	\end{definition}
	
	\begin{remark}
		We assume every group is pro-odd in this section, because we only care about the distribution of $G_{\O}^{\#}(K)^{\calC}$ for $\calC$ described in Proposition~\ref{thm:presentation}, which forces $G_{\O}^{\#}(K)^{\calC}$ to be a pro-odd group. One can remove this ``odd'' requirement from the definition, but then later when explicitly computing $\mu_{\Gamma, \Gamma_{\infty}}(U_{\calC,H})$, the formula will be more complicated since it will involve a product over irreducible nonabelian $H\rtimes \Gamma$-groups that is similar to the one in \cite[Theorems~5.12, 5.15]{LWZB}.
	\end{remark}
	
	The following theorem regards the probability measures and the moments obtained from the random groups $X_{\Gamma,\Gamma_{\infty}}^n$ and $X_{\Gamma,\Gamma_{\infty}}$, and the proof uses the methods established in \cite[\S4-\S6]{LWZB}.
	
	\begin{theorem}\label{thm:mu}
		Let $\calC$ be a set of finite $(2|\Gamma|)'$-$\Gamma$-groups, and let $H$ be an admissible $\Gamma$-group such that $H \simeq H^{\calC}$. 
		\begin{enumerate}
			\item\label{item:mu-1} {\normalfont (Analogue of \cite[Theorems~4.12 and 5.15]{LWZB})} For sufficiently large $n$ such that there is a $\Gamma$-equivariant surjection $\pi_{\ad}^{\calC}: \calF_n^{\calC} \to H$,
				\[
					\mu_{\Gamma, \Gamma_{\infty}}^n(V_{\calC, H})= \frac{|\Sur_{\Gamma}(\calF_n, H)| |H^{\Gamma}|^{n+1} }{|\Aut_{\Gamma}(H)| |H|^{n}|H^{\Gamma_{\infty}}|} \prod_{A \in \calA_H} \prod_{k=0}^{m_{\ad}^{\calC}(n, H, A)-1} \left(1- \frac{|\Hom_{H\rtimes \Gamma}(A,A)|^k |A^{\Gamma}|^{n+1}}{|A|^n |A^{\Gamma_{\infty}}|} \right)
				\]
				where $m_{\ad}^{\calC}(n, H, A)$ is the multiplicity as defined in \S\ref{sect:recall-presentation} and $\calA_H$ is the set of isomorphism classes of finite irreducible $H\rtimes \Gamma$-modules. If $\mu_{\Gamma,\Gamma_{\infty}}^n(V_{\calC, H})=0$ for all $n$ (which means $H$ cannot be written in the form of our random group), then $\mu_{\Gamma,\Gamma_{\infty}}(V_{\calC,H})=0$; otherwise,
				\[
					\mu_{\Gamma, \Gamma_{\infty}}(V_{\calC, H})= \frac{|H^{\Gamma}|}{|\Aut_{\Gamma}(H)| |H^{\Gamma_{\infty}}|} \prod_{A \in \calA_H} \prod_{j=1}^{\infty} \left(1-\lambda(\calC, H, A) \frac{|\Hom_{H\rtimes \Gamma}(A,A)|^{-j} |A^{\Gamma}|}{|A^{\Gamma_{\infty}}|}  \right),
				\]
				where $\lambda(\calC, H,A)$ is the quantity explicitly defined in \cite[(4.10) and Definition~5.14]{LWZB}.

			\item\label{item:mu-3} {\normalfont (Analogue of \cite[Theorem~1.2]{LWZB})} The probability measures $\mu_{\Gamma, \Gamma_{\infty}}^n$ weakly converge to $\mu_{\Gamma, \Gamma_{\infty}}$ as $n \to \infty$.
			
			\item\label{item:mu-4} {\normalfont (Analogue of \cite[Theorem~6.2]{LWZB})} For a finite admissible $\Gamma$-group $H$, the $H$-moment of the probability measure $\mu_{\Gamma, \Gamma_{\infty}}$ is 
			\begin{equation}\label{eq:mu-moment}
				\int \Sur_{\Gamma}(X_{\Gamma, \Gamma_{\infty}}, H) d\mu_{\Gamma, \Gamma_{\infty}}(X) = \lim_{n \to \infty}\int \Sur_{\Gamma}(X_{\Gamma, \Gamma_{\infty}}, H) d\mu^n_{\Gamma, \Gamma_{\infty}}(X)=\frac{1}{[H^{\Gamma_{\infty}}: H^{\Gamma}]}.
			\end{equation}
			
			\item\label{item:mu-5} $\mu_{\Gamma,\Gamma_{\infty}}$ is the unique probability measure on $\calP$ with moments given in \eqref{item:mu-4}.
		\end{enumerate}
	\end{theorem}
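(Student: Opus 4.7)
The plan is to follow the framework of \cite[\S\S4--6]{LWZB} for the real case, adapting it to incorporate the extra $\Gamma_\infty$-invariance constraint on the relator $x_{n+1}$. For part \ref{item:mu-1}, fix $n$ large enough that there is a $\Gamma$-equivariant surjection $\pi_{\ad}^{\calC}: \calF_n^{\calC} \to H$, which exists by Proposition~\ref{prop:multiplicity-A}. The event $(X_{\Gamma,\Gamma_\infty}^n)^\calC \simeq_\Gamma H^\calC$ then decomposes into two conditions: (i) $\pi_{\ad}^{\calC}$ factors through $X_{\Gamma,\Gamma_\infty}^n$, equivalently $\pi_{\ad}^{\calC}(x_i)\in H^\Gamma$ for every $i$; and (ii) the induced surjection becomes an isomorphism after pro-$\calC$ completion. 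Since $\pi_{\ad}^{\calC}$ pushes Haar measure forward to Haar measure on $H$, and analogously on $H^{\Gamma_\infty}$ for $x_{n+1}$ (using splitness of the $\Gamma_\infty$-action via Schur--Zassenhaus, as all orders are coprime to $|\Gamma|$), the probability of (i) is $(|H^\Gamma|/|H|)^n \cdot (|H^\Gamma|/|H^{\Gamma_\infty}|)$; counting surjections modulo $|\Aut_\Gamma(H)|$ gives the stated prefactor. The product over $A\in\calA_H$ arises by iterating Lemma~\ref{lem:prob-module} through a composition series of $\ker(\pi_{\ad}^{\calC})$: each irreducible factor $A$ appears with multiplicity $m_{\ad}^\calC(n,H,A)$, and the probability that the images $Y(x_1),\dots,Y(x_{n+1})$ generate each successive layer is supplied by that lemma, producing the factor indexed by $k$. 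Passing to $n\to\infty$ yields the infinite product formula for $\mu_{\Gamma,\Gamma_\infty}(V_{\calC,H})$.

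For part \ref{item:mu-3}, weak convergence on the profinite space $\calP$ reduces via a standard portmanteau argument to pointwise convergence $\mu^n_{\Gamma,\Gamma_\infty}(U_{\calC,H}) \to \mu_{\Gamma,\Gamma_\infty}(U_{\calC,H})$ on basic opens, which follows directly from the explicit formula in part \ref{item:mu-1} combined with a dominated-convergence argument to pass the limit inside the infinite product.

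For part \ref{item:mu-4}, by linearity of expectation and the Haar-measure-preservation argument above,
\[
\int |\Sur_\Gamma(X,H)|\, d\mu_{\Gamma,\Gamma_\infty}^n(X) = \sum_{\pi \in \Sur_\Gamma(\calF_n^{\odd}, H)} \Prob\bigl(\pi \text{ factors through } X_{\Gamma,\Gamma_\infty}^n\bigr) = |\Sur_\Gamma(\calF_n^{\odd}, H)| \cdot \Bigl(\tfrac{|H^\Gamma|}{|H|}\Bigr)^n \cdot \tfrac{|H^\Gamma|}{|H^{\Gamma_\infty}|}.
\]
The real-case moment identity \cite[Theorem~6.2]{LWZB} is precisely $|\Sur_\Gamma(\calF_n^{\odd}, H)| \cdot (|H^\Gamma|/|H|)^n \to 1$ as $n \to \infty$, so the limit equals $|H^\Gamma|/|H^{\Gamma_\infty}| = 1/[H^{\Gamma_\infty}\!:\!H^\Gamma]$, giving \eqref{eq:mu-moment}. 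For part \ref{item:mu-5}, uniqueness follows by inverting the moment data: since the moments $1/[H^{\Gamma_\infty}\!:\!H^\Gamma] \leq 1$ are uniformly bounded over all admissible $\Gamma$-groups, a M\"obius inversion over the poset of $\Gamma$-equivariant quotients recovers each $\mu(V_{\calC,H})$ from the collection of moments $\int|\Sur_\Gamma(\cdot,H')|\,d\mu$ with $H'$ ranging over quotients of $H$, following the template of \cite[\S6]{LWZB}.

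The main obstacle I expect to be the bookkeeping in part \ref{item:mu-1}: choosing a filtration of $\ker(\pi_{\ad}^{\calC})$ by $\calF_n^\calC\rtimes\Gamma$-invariant closed normal subgroups whose successive quotients are direct sums of copies of some $A\in\calA_H$, and matching each stage with a clean application of Lemma~\ref{lem:prob-module} so that the exponent $k$ in $|\Hom_{H\rtimes\Gamma}(A,A)|^k$ and the $\Gamma_\infty$-invariant factor $|A^{\Gamma_\infty}|$ appear with the correct normalization. Once this filtration and its associated conditional-probability decomposition are set up in analogy with \cite[\S\S4--5]{LWZB}, the remaining manipulations---the identification of the prefactor with $|\Sur_\Gamma(\calF_n,H)||H^\Gamma|^{n+1}/(|\Aut_\Gamma(H)||H|^n|H^{\Gamma_\infty}|)$ and the passage to the limit---are largely formal.
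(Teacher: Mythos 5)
Your overall plan tracks the paper's proof, but two steps are off. First, in part~(1) you describe the product over $A\in\calA_H$ as coming from ``iterating Lemma~\ref{lem:prob-module} through a composition series of $\ker(\pi_{\ad}^{\calC})$'' in which $A$ appears with multiplicity $m_{\ad}^{\calC}(n,H,A)$. That is not what the multiplicity measures: by the definition recalled in \S\ref{sect:recall-presentation}, $m_{\ad}^{\calC}(n,H,A)$ is the multiplicity of $A$ in the semisimple quotient $R:=\ker\pi_{\ad}^{\calC}/M$, where $M$ is the intersection of all maximal proper $\calF_n^{\calC}\rtimes\Gamma$-normal subgroups of $\ker\pi_{\ad}^{\calC}$, not the composition multiplicity of $A$ in $\ker\pi_{\ad}^{\calC}$ itself (which is typically far larger). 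The correct step is a single Frattini-type reduction, not a stepwise iteration through a full filtration: one has $[Y(\{x_1,\ldots,x_{n+1}\})]_{\calF_n^{\calC}\rtimes\Gamma}=N$ if and only if the images of $Y(x_i)$ in $R$ topologically generate $R$, because any proper closed $\calF_n^{\calC}\rtimes\Gamma$-normal subgroup of $N$ lies in some maximal one and hence contains $M$. Lemma~\ref{lem:prob-module} is then applied once to $R=\prod_A A^{\oplus m_{\ad}^{\calC}(n,H,A)}$, and the product you want falls out. If you instead filter through a full composition series and iterate Lemma~\ref{lem:prob-module} at each layer, the exponents in the product come out wrong (and, worse, generation is not decided layer-by-layer in a way that lets the conditional probabilities factor).

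Second, in part~(5) you propose M\"obius inversion over the poset of $\Gamma$-equivariant quotients to prove uniqueness, appealing only to boundedness of the moments. This is insufficient: the measure lives on the infinite space $\calP$, the inversion sum is infinite, and uniqueness from moments is a genuine moment problem for which a formal inversion without absolute-convergence control proves nothing. The paper instead invokes \cite[Theorem~1.2]{Sawin20}, which is exactly the tool needed here: it gives uniqueness from moments once the $O(1)$ bound $[H^{\Gamma_{\infty}}:H^{\Gamma}]^{-1}\leq 1$ is observed. Your parts~(3) and~(4) are essentially correct and follow the paper's route; in particular the identity $|\Sur_{\Gamma}(\calF_n^{\odd},H)|(|H^{\Gamma}|/|H|)^n\to 1$ is the right input from \cite{LWZB} to close part~(4).
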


	\begin{proof}
		We first assume $\calC$ is finite and prove the formulas in \eqref{item:mu-1} on basic opens $U_{\calC, H}$. First, note that, using the notation of $Y$, the random group $(X_{\Gamma, \Gamma_{\infty}}^n)^{\calC}$ can be rewritten as 
		\[
			(X_{\Gamma, \Gamma_{\infty}}^n)^{\calC}=\faktor{\calF_n^{\calC}}{[Y(\{x_1, \ldots, x_{n+1}\})]_{\calF_n^{\calC} \rtimes \Gamma}},
		\]
		where $x_i$, $i=1,\ldots,n$ are randomly chosen from $\calF_n^{\calC}$ and $x_{n+1}$ is randomly chosen from $(\calF_n^{\calC})^{\Gamma_{\infty}}$.
		The probability that $(X_{\Gamma, \Gamma_{\infty}}^n)^{\calC}$ is isomorphic to $H$ as $\Gamma$-groups equals $\sum_N \Prob([Y(\{x_1, \ldots, x_{n+1}\})]_{\calF_n^{\calC} \rtimes \Gamma}=N)$ where the sum runs over all normal $\Gamma$-subgroups $N$ with $\calF_n^{\calC}/N\simeq_{\Gamma} H$.
		Such a normal subgroup $N$ with $\calF_n^{\calC}/N\simeq_{\Gamma} H$ defines a $\Gamma$-equivariant surjection $\pi_{\ad}^{\calC}: \calF_n^{\calC} \to H$. Applying the method in the proof of Theorem~\ref{thm:presentation}, define $M$ to be the intersection of all maximal proper $\calF_n^{\calC}\rtimes \Gamma$-normal subgroups of $\ker \pi_{\ad}^{\calC}$, define $R:=\ker \pi_{\ad}^{\calC}/M$ and $F:=\calF_n^{\calC}/M$, and obtain an exact sequence $1 \to R \to F \to H \to 1$ of $\Gamma$-groups. By Lemma~\ref{lem:prob-module}, for $(r_1, \ldots, r_{n+1})$ chosen uniformly from $R^n \times R^{\Gamma_{\infty}}$, the probability that $[Y(\{r_1, \ldots, r_{n+1}\})]_{F\rtimes \Gamma}=R$ is 
		\[
			\prod_{A \in \calA_H} \prod_{k=0}^{m_{\ad}^{\calC}(n,H,A)-1} \left( 1- \frac{|\Hom_{H\rtimes \Gamma}(A,A)|^k |A^{\Gamma}|^{n+1}}{|A|^n |A^{\Gamma_{\infty}}|} \right).
		\] 
		By \cite[Lemmas~3.5 and 3.6]{LWZB}, for $x_i$ chosen randomly from $\calF_n^{\calC}$, the probability that all coordinates of $Y(x_i)$ are contained in $N$ is $|H^{\Gamma}|/|H|$; similarly, for $x_{n+1}$ chosen randomly from $(\calF_n^{\calC})^{\Gamma_{\infty}}$, the probability that coordinates of $Y(x_{n+1})$ are contained in $N$ is $|H^{\Gamma}|/|H^{\Gamma_{\infty}}|$. The number of normal $\Gamma$-subgroups $N$ with $\calF_n^{\calC}/N\simeq_{\Gamma} H$ is $|\Sur_{\Gamma}(\calF_n,H)|/|\Aut_{\Gamma}(H)|$; since $H^{\calC}\simeq H$, every surjection from $\calF_n$ to $H$ factors through $\calF_n^{\calC}$, so $|\Sur_{\Gamma}(\calF_n,H)|=|\Sur_{\Gamma}(\calF_n^{\calC},H)|$. Then applying all the arguments above, we obtain the formula for $\mu_{\Gamma, \Gamma_{\infty}}^n(U_{\calC,H})$ in the statement \eqref{item:mu-1} of the theorem. Using the formula for $\mu_{\Gamma, \Gamma_{\infty}}^n(U_{\calC,H})$ as an input, the rest of the statement~\eqref{item:mu-1} and the statements \eqref{item:mu-3}, \eqref{item:mu-4} all follow word-by-word in the same way as the statements and the proofs in \cite[\S4-\S6]{LWZB}. Finally, since $[H^{\Gamma_{\infty}}:H^{\Gamma}]^{-1}=O(1)$, it follows by \cite[Theorem~1.2]{Sawin20} that $\mu_{\Gamma, \Gamma_{\infty}}$ is the unique probability measure on $\calP$ satisfying \eqref{eq:mu-moment}.
	\end{proof}
	
\section{Conjectures and comparison to previous conjectures}\label{sect:conjecture}

	The following is the complete version of our conjecture. 
	\begin{conjecture}\label{conj:main}
		Let $Q$ be either $\Q$ or $\F_q(t)$. Let $\Gamma$ be a finite group and $\Gamma_{\infty}$ a cyclic subgroup of $\Gamma$, such that $|\Gamma_{\infty}|=2$ if $Q=\Q$, and $\gcd(q,|\Gamma|)=1$ and $|\Gamma_{\infty}|\mid q-1$ if $Q=\F_q(t)$. 
		
		{\bf (Probability Version)} Let $\calC$ be a set of finite $\Gamma$-groups all of whose orders are prime to $|\Gamma|$ if $Q=\Q$ and prime to $q(q-1)|\Gamma|$ if $Q=\F_q(t)$. Then for a finite $\Gamma$-group $H$ satisfying $H^{\calC}=H$, 
			\[
				\Prob\left(G_{\O}^{\#}(K)^{\calC} \simeq_{\Gamma} H \right)=\lim_{N \to \infty} \frac{\sum\limits_{D \leq N} \#\left\{ K \in E_{\Gamma,\Gamma_{\infty}}(D, Q) \mid G_{\O}^{\#}(K)^{\calC} \simeq_{\Gamma} H \right\}}{\sum\limits_{D\leq N} \# E_{\Gamma, \Gamma_{\infty}}(D,Q)} = \mu_{\Gamma,\Gamma_{\infty}}(V_{\calC, H}).
			\]
			The explicit formula of $\mu_{\Gamma,\Gamma_{\infty}}(V_{\calC,H})$ is given in Theorem~\ref{thm:mu} \eqref{item:mu-1}.
		
		{\bf (Moment Version)} When $Q=\F_q(t)$, we conjecture that the equation \eqref{eq:FF-moment-2} always holds (without the assumption that $q$ is sufficiently large). In particular, as a consequence, for any finite admissible $\Gamma$-group $H$,
		\[
			\EE\left(\# \Sur_{\Gamma} (G_{\O}^{\#}(K), H)\right)=\lim_{N \to \infty}\frac{\sum\limits_{ n \leq N} \sum\limits_{K \in E_{\Gamma, \Gamma_{\infty}}(q^n, \F_q(t))} \# \Sur_{\Gamma} (G_{\O}^{\#}(K), H)}{ \sum\limits_{ n \leq N} \#E_{\Gamma, \Gamma_{\infty}}(q^n, \F_q(t))} = \frac{H_2(H\rtimes \Gamma,\Z)_{(|\Gamma|)'}[q-1]}{[H^{\Gamma_{\infty}}: H^{\Gamma}]}.
		\]
		When $Q=\Q$, for any finite admissible $\Gamma$-group $H$, we conjecture that
		\[
			\EE\left(\# \Sur_{\Gamma} (G_{\O}^{\#}(K), H)\right)=\lim_{N \to \infty}\frac{\sum\limits_{D\leq N} \sum\limits_{K \in E_{\Gamma, \Gamma_{\infty}}(D, \Q)} \# \Sur_{\Gamma} (G_{\O}^{\#}(K), H) }{ \sum\limits_{D\leq N} \#E_{\Gamma, \Gamma_{\infty}}(D, \Q)} = \frac{1}{[H^{\Gamma_{\infty}}: H^{\Gamma}]}.
		\]
	\end{conjecture}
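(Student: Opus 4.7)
The plan is to derive Conjecture~\ref{conj:main} in stages, leveraging the three pillars already established in the paper: the presentation result (Theorem~\ref{thm:presentation}) which fixes the algebraic shape of $G_{\O}^{\#}(K)^{\calC}$, the function field moment computation (Theorem~\ref{thm:FF-moment}) which matches one side of the conjectured equalities, and the uniqueness of a probability measure with prescribed moments (Theorem~\ref{thm:mu}\eqref{item:mu-5}), which bridges moments to distribution. I would tackle the function field case first, since the Hurwitz-theoretic tools apply, and leave the number field case for last.

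\textbf{Function field moment version.} For $Q=\F_q(t)$, the fixed-$\delta$ moment in \eqref{eq:FF-moment-2} already gives the value $1/[H^{\Gamma_{\infty}}:H^{\Gamma}]$ once $q$ is large. Summing over all homomorphisms $\delta: \Zhat(1)_{(|\Gamma|)'} \to H_2(H\rtimes\Gamma,\Z)_{(|\Gamma|)'}$ realizable over $\F_q$ -- which, by the Frobenius compatibility $\delta(\zeta^q)=\delta(\zeta)$, exactly picks out the $(q-1)$-torsion subgroup -- produces the factor $H_2(H\rtimes\Gamma,\Z)_{(|\Gamma|)'}[q-1]$ in the stated moment formula. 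The genuinely new analytic task is to remove the large-$q$ hypothesis from \eqref{eq:FF-moment-2}. Concretely, the Landesman--Levy estimate \eqref{eq:LL-HS(8.6)} requires $q > C^2$ for an explicit constant $C=C(G_1,G_{1,\infty},c_1)$; I would attempt to sharpen this to a bound valid for every $q$ coprime to $|G||H|$, for example by combining the explicit control of $H^i(Y_{\overline{\F}_q},\pi_*\Q_\ell)$ in Proposition~\ref{prop:HurPointCount} with a slope refinement of the homological stability in \cite{LL-HS}, or by a direct cohomological argument that exploits the quotient structure $[\CHur/G_\infty]$ proved in Lemma~\ref{lem:Comp-CH-H}.

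\textbf{From moments to distribution.} Once the full moment formula holds, the probability version follows by uniqueness. Theorem~\ref{thm:presentation} exhibits $G_{\O}^{\#}(K)^{\calC}$ as a quotient of $\calF_n^{\calC}$ by relations $[r_i^{-1}\gamma(r_i)]$ with $r_{n+1}$ fixed by $\Gamma_{\infty}$, which is exactly the algebraic shape used to construct $X_{\Gamma,\Gamma_\infty}^{\calC}$ in Definition~\ref{def:RandomGroup}. In particular, $G_{\O}^{\#}(K)^{\calC}$ is supported on the set $\{H : \mu_{\Gamma,\Gamma_{\infty}}(V_{\calC,H}) > 0\}$. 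The moments of this empirical distribution agree with those of $\mu_{\Gamma,\Gamma_\infty}$ by combining the full-$q$ moment statement above with Theorem~\ref{thm:mu}\eqref{item:mu-4}; because these moments are $O(1)$ in $|H|$, Theorem~\ref{thm:mu}\eqref{item:mu-5} (via \cite[Theorem~1.2]{Sawin20}) identifies $\mu_{\Gamma,\Gamma_\infty}$ as the unique probability measure on $\calP$ with these moments, and the probability version follows for $Q=\F_q(t)$.

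\textbf{Main obstacle: the number field case.} The hardest part is $Q=\Q$, where none of the Hurwitz-stack machinery is available. Here the approach is necessarily conditional: one formulates the moment conjecture by analogy (with trivial factor $H_2(H\rtimes\Gamma,\Z)_{(|\Gamma|)'}[1]=1$, reflecting that $\Q$ has no nontrivial roots of unity beyond those already captured by $\Gamma_{\infty}=\Z/2\Z$), then invokes Theorem~\ref{thm:presentation} (which holds over $\Q$ precisely under the assumption $|\Gamma_{\infty}|=2$) together with Theorem~\ref{thm:mu}\eqref{item:mu-5} to reduce the probability conjecture to the moment conjecture. Proving the unconditional moment statement for number fields is the fundamental obstruction; it is open already in the classical abelian imaginary quadratic case and appears to require genuinely new ideas beyond what the present paper can provide, so this last step must remain a conjecture.
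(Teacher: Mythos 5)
This is a conjecture, not a theorem: the paper states it without proof, and its justification rests on Theorems~\ref{thm:FF-moment}, \ref{thm:presentation}, and \ref{thm:mu} together with the consistency checks against prior heuristics in \S\ref{sect:conjecture}. Your proposal therefore cannot be matched against a paper proof, but it can be assessed as a roadmap, and on that score it captures the paper's logic well: the function-field moment comes from point-counting, the random group comes from the presentation, and the number-field case must remain a conjecture. Your observation that summing over realizable $\delta$ produces the $\#H_2(H\rtimes\Gamma,\Z)_{(|\Gamma|)'}[q-1]$ factor is exactly the right reading of how the fixed-$\delta$ moment in Theorem~\ref{thm:FF-moment} aggregates to the unconditional moment.

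There are, however, two gaps in the passage from moments to the probability version. First, Theorem~\ref{thm:mu}\eqref{item:mu-5} is a \emph{uniqueness} statement: $\mu_{\Gamma,\Gamma_\infty}$ is the only measure on $\calP$ with those moments. It does \emph{not} by itself yield that convergence of moments of the empirical family $G_{\O}^{\#}(K)^{\calC}$ implies weak convergence of the empirical distribution to $\mu_{\Gamma,\Gamma_\infty}$. A ``moments determine distributions'' (or robustness) theorem is needed, and the paper only has this in special cases (abelian via \cite{EVW} Lemma~8.2 and \cite{Sawin-Wood-classgroup}, and $p$-class tower via \cite[Theorem~1.4]{Boston-Wood}); for a general finite $\calC$ it is not established, which is one more reason the probability version remains conjectural even over $\F_q(t)$. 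Second, you implicitly assume the empirical moments equal $1/[H^{\Gamma_\infty}:H^\Gamma]$ for all $H$ appearing, but this requires $H_2(H\rtimes\Gamma,\Z)_{(|\Gamma|)'}[q-1]=1$; that is precisely why the probability version imposes that $\calC$ consist of groups whose orders are prime to $q(q-1)|\Gamma|$ (forcing $|H|$, and hence $|H_2(H\rtimes\Gamma,\Z)|$, to be prime to $q-1$). Your sketch does not mention this hypothesis and would silently break without it, since for $q\equiv 1\bmod |H_2|$ the empirical moment is $\#H_2/[H^{\Gamma_\infty}:H^\Gamma]\neq 1/[H^{\Gamma_\infty}:H^\Gamma]$ and the distributions cannot agree.
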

	
	\begin{remark}
		\begin{enumerate}
			\item The function field moment conjecture above covers the case when the base field contains extra roots of unity (i.e. when $\gcd(|\mu_Q|, |H|)>1$), while the number field moment conjecture doesn't. This is because the inertia group $\Gamma_{\infty}$ at the archimedean place has to have order 2 in the number field case, so $|\Gamma|$ must be even, which implies an admissible $\Gamma$-group must be of odd order.
			\item The probability version does not cover the case when the base field contains extra roots of unity because the special form of presentation used in our random group model is not expected to work in that case. The requirement \ref{item:presentation-2} on $\calC$ in Theorem~\ref{thm:presentation} is necessary, and statistically we conjecture that the requirement \ref{item:presentation-2} holds for 100\% of $K$ when the base field $Q$ does not contain extra roots of unity. Furthermore, it is known that there is a nontrivial ``pairing'' datum on $\Cl(K)$ (even on $G_{\O}^{\#}(K)$) when $\gcd(|\mu_Q|,|H|)>1$, but our random group does not carry any pairing structure.
			\item In the totally real case, the analogous conjectures have been made in \cite{LWZB} and \cite{Liu-ROU}. One can plug $\Gamma_{\infty}=1$ into the formulas for probability measures and moments used in Conjecture~\ref{conj:main} to recover those conjectures. 
		\end{enumerate}
	\end{remark}
	
\subsection{Conjectures for $p$-class groups of imaginary quadratic extensions for odd $p$}

	When $\Gamma=\Gamma_{\infty}=\Z/2\Z$, Conjecture~\ref{conj:main} predicts the distribution of $G_{\O}^{\#}(K)$ as $K$ varies over imaginary quadratic extensions of $Q$. 
	
	Let $p$ be an odd prime number. The original Cohen--Lenstra heuristics \cite{Cohen-Lenstra} give conjectures for the distribution of the $p$-part of class groups of quadratic number fields. For imaginary quadratic number fields, their conjecture says that the probability that $\Cl(K)[p^{\infty}]$ is isomorphic to $H$ is inversely proportional to the size of $\Aut(H)$ for any abelian $p$-group $H$, and that the moment for the distribution of $\Cl(K)[p^{\infty}]$ is always 1. The function field analog of the Cohen--Lenstra heuristics was given by Friedman and Washington \cite{Friedman-Washington}, which conjectures that the distribution of $p$-class groups of imaginary quadratic extensions of $\F_q(t)$ should be the same as the number field case, when $p\nmid q-1$ (equivalently, when $\F_q(t)$ does not contain the $p$-th roots of unity). A large-$q$-limit version of the Friedman--Washington conjecture was proved in the work of Ellenberg, Venkatesh, and Westerland \cite{EVW}. Recently, Landesman and Levy strengthened the homological stability method of Ellenberg--Venkatesh--Westerland to prove a large-$q$ version of the moment conjecture \cite{LL-CL}. In the function field case, when $p\mid q-1$, Lipnowski, Sawin and Tsimerman \cite{Lipnowski-Tsimerman, Lipnowski-Sawin-Tsimerman} computed the moments with large-$q$-limit, and Landesman and Levy gave the moments for large $q$ in \cite{LL-CL}. Using these moment computations, Sawin and Wood applied their results from the moment problem \cite{Sawin-Wood-category} to give the probability conjectures for those cases when the base field contains extra roots of unity \cite{Sawin-Wood-classgroup}.

	 If we take $\calC$ to be the set of all finite abelian $p$-$\Gamma$-groups, then the pro-$\calC$ completion of a profinite $\Gamma$-group is the $p$-primary part of the abelianization, so $G_{\O}^{\#}(K)^{\calC} = \Cl(K)[p^{\infty}]$ when $K$ is a quadratic number field and when $K$ is a quadratic extension of $\F_q(t)$ with $p\nmid q$. 
	 	
	The number field case of the Cohen--Lenstra heuristics conjecture that the moments of the $p$-class groups for imaginary quadratic fields are always 1. For a finite abelian admissible $p$-$\Gamma$-group $H$, since $\Gamma=\Gamma_{\infty}$, the moment version of Conjecture~\ref{conj:main} says that the $H$-moment of the distribution of $\Cl(K)[p^{\infty}]$ for imaginary quadratic number fields $K$ is 1, which agrees with the Cohen--Lenstra conjectures. In the function field case, the base field $\F_q(t)$ could contain $p$-th roots of unity, and in the work \cite{Lipnowski-Sawin-Tsimerman, Lipnowski-Tsimerman}, their moment results have a modification term $\wedge^2 H$. Note that for an abelian $p$-group $H$, there is a unique $\Gamma$-action on $H$ that makes $H$ an admissible $\Gamma$-group, namely $\Gamma$ acting on $H$ as taking inverse. One can check that $H_2(H \rtimes \Gamma, \Z) \simeq \wedge^2 H$. So the moment version in Conjecture~\ref{conj:main} agrees with all the previously known moment conjectures for $p$-class groups of imaginary quadratic extensions of $\Q$ or $\F_q(t)$. For $p$-class groups, it is known that the probability measures are completely determined by the moments: see \cite[Lemma~8.2]{EVW} and \cite[Theorem~5.2]{Sawin-Wood-classgroup} (the former is for the non-roots-of-unity case and the latter is for the general situation). So the agreement between Conjecture~\ref{conj:main} and previous conjectures in the moment version implies the agreement in the probability version.
	
\subsection{Conjectures for $p$-class tower groups of imaginary quadratic extensions for odd $p$}
	
	Boston, Bush, and Hajir in \cite{BBH-imaginary} used $p$-group theory techniques to study the probability measures for the distribution of the $p$-class tower groups, one of the well-studied nonabelian analogs of $p$-class groups, for imaginary quadratic number fields. The $p$-class tower group of a field $K$ is the Galois group of the maximal unramified $p$-extension of $K$, and equivalently, is the pro-$p$ completion of $G_{\O}(K)$. Boston and Wood \cite{Boston-Wood} computed the moments of the conjectured probability measures given in \cite{BBH-imaginary}, and proved the function field analog (with a large-$q$ limit) of the Boston--Bush--Hajir conjectures. Their moment conjecture says that the moments of the distribution of odd $p$-class tower groups of imaginary quadratic number fields should be always 1, similarly to the Cohen--Lenstra conjecture.
	
	Similarly to the discussion in the previous section, if we take $\calC$ to be the set of all finite $p$-$\Gamma$-groups, then the pro-$\calC$ completion is the pro-$p$ completion, so $G_{\O}^{\#}(K)^{\calC}$ is the $p$-class tower group of $K$. Then the moment version of Conjecture~\ref{conj:main} agrees with the Boston--Bush--Hajir moment conjecture.
	By \cite[Theorem~1.4]{Boston-Wood}, the ``moments determine distributions'' result holds for $p$-class tower groups, so the probability versions also agree with each other.

\subsection{Comparison to the Cohen--Lenstra--Martinet conjectures}
	
	The Cohen--Lenstra--Martinet conjectures \cite{Cohen-Martinet} generalize the original Cohen--Lenstra heuristics. By the results of Wang and Wood \cite[Theorems~1.1 and 1.2]{Wang-Wood}, the Cohen--Lenstra--Martinet conjectures predict: as $K$ varies among all number fields $K$ with $\Gal(K/\Q)\simeq \Gamma$ and decomposition subgroup $\Gamma_{\infty}$ at $\infty$, for any admissible abelian $p$-$\Gamma$-group $H$ with $p\nmid |\Gamma|$,
	\begin{itemize}
		\item the probability that $\Cl(K)[p^{\infty}] \simeq_{\Gamma} H$ is inversely proportional to $|H^{\Gamma_{\infty}}||\Aut_{\Gamma}(H)|$, and  
		\item the average size of $\Sur_{\Gamma}(\Cl(K), H)$ is $|H^{\Gamma_{\infty}}|^{-1}$.
	\end{itemize}	
	
	Note that an abelian $p$-$\Gamma$-group $H$ is admissible if and only if $H^{\Gamma}=1$. Taking $\calC$ to be the set of all finite abelian $p$-$\Gamma$-groups, the moment version of Conjecture~\ref{conj:main} agrees with the Cohen--Lenstra--Martinet conjectures; and because moments determine distributions uniquely in this situation \cite[Theorem~1.3]{Wang-Wood}, the probability version of Conjecture~\ref{conj:main} also agrees with the Cohen--Lesntra--Martinet heuristics.

\subsection{Gerth's conjectures}\label{ss:Gerth}	 
	Orthogonally to the Cohen--Lenstra heuristics, Gerth made conjectures for the distribution of the $2$-part of class group for quadratic fields. By Gauss's genus theory, the $2$-torsion subgroup of $\Cl(K)$ is determined (up to a small constant) by the number of primes ramified in $K/\Q$, which implies that the distribution of $\Cl(K)[2]$ is very different from the distribution of $\Cl(K)[p]$ for odd prime $p$. Gerth in \cite{Gerth84} conjectured that the distribution of $2\Cl(K)[2^{\infty}]$ can be predicted by probability measures similar to the ones used in the Cohen--Lenstra heursitics; Gerth's conjecture is proved by Smith \cite{Smith}. Unlike the odd $p$ situation, the function field moment results corresponding to Gerth's conjecture is not related to counting points on Hurwitz spaces in a direct way, as the impact of the distribution of the bad part ($\Cl(K)[2]$ or $\Cl(K)/2\Cl(K)$) cannot be eliminated. A recent work of the first author \cite{Liu-GG} studies the moments by interpreting the impact of the bad part into a weight function, and proves a weighted version of moment conjectures related to Gerth's conjecture, in the totally real case (see \cite[Theorem~1.3]{Liu-GG}). Using the idea of weighted moments and applying the method in the proof of \eqref{eq:FF-moment-1}, we obtain the imaginary analog of \cite[Theorem~1.3]{Liu-GG}
	\begin{equation}\label{eq:Gerth-imaginary}
		\lim_{N \to \infty} \lim_{\substack{q \to \infty \\ \val_2(q-1)=v}} \frac{\sum\limits_{0 \leq n \leq N} \sum\limits_{K \in E_2^-(q^n, \F_q(t))}w_H(K) \#\Sur(2\Cl(K), H)}{\sum\limits_{0\leq n \leq N} \sum\limits_{K \in E_2^-(q^n, \F_q(t))} w_H(K)} = |(\wedge^2 H)[2^{v-1}]|,
	\end{equation}
	for any finite abelian $2$-group $H$. Here $E_2^-(q,\F_q(t))$ is the set $E_{\Gamma,\Gamma_{\infty}}$ for $\Gamma=\Gamma_{\infty}=\Z/2\Z$ and $\val_2(m)$ is the additive $2$-adic valuation of $m$. The weight function $w_M(K)$ is defined as
	\[
		w_H(K):= \begin{cases}
			\# \Hom(\Cl(K), H/2H) & \text{if } \Sur(\Cl(K), H/2H) \neq \O \\
			0 & \text{otherwise.}
		\end{cases}
	\]
	In particular, when we restrict to all $q$ with $q\equiv 3 \bmod 4$ (when the base field does not contain 4th roots of unity), then the right-hand side of \eqref{eq:Gerth-imaginary} is 1.

% \bib, bibdiv, biblist are defined by the amsrefs package.
\begin{bibdiv}
\begin{biblist}

\bib{BBCL}{article}{
      author={Bell, Renee},
      author={Booher, Jeremy},
      author={Chen, William~Y.},
      author={Liu, Yuan},
       title={Tamely ramified covers of the projective line with alternating
  and symmetric monodromy},
        date={2022},
        ISSN={1937-0652,1944-7833},
     journal={Algebra Number Theory},
      volume={16},
      number={2},
       pages={393\ndash 446},
         url={https://doi.org/10.2140/ant.2022.16.393},
      review={\MR{4412578}},
}

\bib{BBH-imaginary}{article}{
      author={Boston, Nigel},
      author={Bush, Michael~R.},
      author={Hajir, Farshid},
       title={Heuristics for {$p$}-class towers of imaginary quadratic fields},
        date={2017},
        ISSN={0025-5831},
     journal={Math. Ann.},
      volume={368},
      number={1-2},
       pages={633\ndash 669},
         url={https://doi-org.proxy.lib.umich.edu/10.1007/s00208-016-1449-3},
      review={\MR{3651585}},
}

\bib{BBH-real}{article}{
      author={Boston, Nigel},
      author={Bush, Michael~R.},
      author={Hajir, Farshid},
       title={Heuristics for {$p$}-class towers of real quadratic fields},
        date={2021},
        ISSN={1474-7480},
     journal={J. Inst. Math. Jussieu},
      volume={20},
      number={4},
       pages={1429\ndash 1452},
  url={https://doi-org.proxy2.library.illinois.edu/10.1017/S1474748019000641},
      review={\MR{4293801}},
}

\bib{Behrend-2}{article}{
      author={Behrend, Kai~A.},
       title={Derived {$l$}-adic categories for algebraic stacks},
        date={2003},
        ISSN={0065-9266,1947-6221},
     journal={Mem. Amer. Math. Soc.},
      volume={163},
      number={774},
       pages={viii+93},
         url={https://doi.org/10.1090/memo/0774},
      review={\MR{1963494}},
}

\bib{Behrend-TraceFormula}{article}{
      author={Behrend, Kai~A.},
       title={The {L}efschetz trace formula for algebraic stacks},
        date={1993},
        ISSN={0020-9910,1432-1297},
     journal={Invent. Math.},
      volume={112},
      number={1},
       pages={127\ndash 149},
         url={https://doi.org/10.1007/BF01232427},
      review={\MR{1207479}},
}

\bib{Boston-Wood}{article}{
      author={Boston, Nigel},
      author={Wood, Melanie~Matchett},
       title={Non-abelian {C}ohen-{L}enstra heuristics over function fields},
        date={2017},
        ISSN={0010-437X},
     journal={Compos. Math.},
      volume={153},
      number={7},
       pages={1372\ndash 1390},
  url={https://doi-org.ezproxy.library.wisc.edu/10.1112/S0010437X17007102},
      review={\MR{3705261}},
}

\bib{Cohen-Lenstra}{incollection}{
      author={Cohen, H.},
      author={Lenstra, H.~W., Jr.},
       title={Heuristics on class groups of number fields},
        date={1984},
   booktitle={Number theory, {N}oordwijkerhout 1983 ({N}oordwijkerhout, 1983)},
      series={Lecture Notes in Math.},
      volume={1068},
   publisher={Springer, Berlin},
       pages={33\ndash 62},
         url={https://doi-org.ezproxy.library.wisc.edu/10.1007/BFb0099440},
      review={\MR{756082}},
}

\bib{Cohen-Martinet}{article}{
      author={Cohen, H.},
      author={Martinet, J.},
       title={Class groups of number fields: numerical heuristics},
        date={1987},
        ISSN={0025-5718, 1088-6842},
     journal={Mathematics of Computation},
      volume={48},
      number={177},
       pages={123\ndash 137},
         url={http://www.ams.org/mcom/1987-48-177/S0025-5718-1987-0866103-4/},
}

\bib{Deligne}{article}{
      author={Deligne, Pierre},
       title={La conjecture de {W}eil. {II}},
        date={1980},
        ISSN={0073-8301,1618-1913},
     journal={Inst. Hautes \'{E}tudes Sci. Publ. Math.},
      number={52},
       pages={137\ndash 252},
         url={http://www.numdam.org/item?id=PMIHES_1980__52__137_0},
      review={\MR{601520}},
}

\bib{EVW}{article}{
      author={Ellenberg, Jordan~S.},
      author={Venkatesh, Akshay},
      author={Westerland, Craig},
       title={Homological stability for {H}urwitz spaces and the
  {C}ohen-{L}enstra conjecture over function fields},
        date={2016},
        ISSN={0003-486X,1939-8980},
     journal={Ann. of Math. (2)},
      volume={183},
      number={3},
       pages={729\ndash 786},
         url={https://doi.org/10.4007/annals.2016.183.3.1},
      review={\MR{3488737}},
}

\bib{Friedman-Washington}{incollection}{
      author={Friedman, Eduardo},
      author={Washington, Lawrence~C.},
       title={On the distribution of divisor class groups of curves over a
  finite field},
        date={1989},
   booktitle={Th\'{e}orie des nombres ({Q}uebec, {PQ}, 1987)},
   publisher={de Gruyter, Berlin},
       pages={227\ndash 239},
      review={\MR{1024565}},
}

\bib{Gerth84}{article}{
      author={Gerth, Frank, III},
       title={The {$4$}-class ranks of quadratic fields},
        date={1984},
        ISSN={0020-9910},
     journal={Invent. Math.},
      volume={77},
      number={3},
       pages={489\ndash 515},
         url={https://doi-org.proxy2.library.illinois.edu/10.1007/BF01388835},
      review={\MR{759260}},
}

\bib{Hall11}{article}{
      author={Hall, Jack},
       title={{Generalizing the GAGA principle}},
        date={2011},
        note={preprint, arXiv:1101.5123v2},
}

\bib{Liu-ROU}{article}{
      author={Liu, Yuan},
       title={{Non-abelian Cohen--Lenstra Heuristics in the presence of roots
  of unity}},
        date={2022},
        note={preprint, arXiv:2202.09471v2},
}

\bib{Liu-GG}{article}{
      author={Liu, Yuan},
       title={{On the distribution of class groups of abelian extensions}},
        date={2024},
        note={preprint, arXiv:2411.19318v1},
}

\bib{Liu-presentation}{article}{
      author={Liu, Yuan},
       title={Presentations of {G}alois groups of maximal extensions with
  restricted ramification},
        date={2025},
     journal={Algebra Number Theory},
      volume={19},
      number={5},
       pages={835\ndash 881},
         url={https://doi.org/10.2140/ant.2025.19.835},
}

\bib{LL-CL}{article}{
      author={Landesman, Aaron},
      author={Levy, Ishan},
       title={{The Cohen--Lenstra moments over function fields via the stable
  homology of non-splitting Hurwitz spaces}},
        date={2024},
        note={preprint, arXiv:2410.22210v2},
}

\bib{LL-HS}{article}{
      author={Landesman, Aaron},
      author={Levy, Ishan},
       title={{Homological stability for Hurwitz spaces and applications}},
        date={2025},
        note={preprint, arXiv:2503.03861v1},
}

\bib{Lipnowski-Sawin-Tsimerman}{article}{
      author={Lipnowski, Michael},
      author={Sawin, Will},
      author={Tsimerman, Jacob},
       title={{Cohen}--{Lenstra} heuristics and bilinear pairings in the
  presence of roots of unity},
        date={2020},
        note={preprint, arXiv:2007.12533v1},
}

\bib{Lipnowski-Tsimerman}{article}{
      author={Lipnowski, Michael},
      author={Tsimerman, Jacob},
       title={Cohen-{L}enstra heuristics for \'{e}tale group schemes and
  symplectic pairings},
        date={2019},
        ISSN={0010-437X},
     journal={Compos. Math.},
      volume={155},
      number={4},
       pages={758\ndash 775},
         url={https://doi.org/10.1112/s0010437x19007036},
      review={\MR{3925501}},
}

\bib{LWZB}{article}{
      author={Liu, Yuan},
      author={Wood, Melanie~Matchett},
      author={Zureick-Brown, David},
       title={A predicted distribution for {G}alois groups of maximal
  unramified extensions},
        date={2024},
        ISSN={0020-9910,1432-1297},
     journal={Invent. Math.},
      volume={237},
      number={1},
       pages={49\ndash 116},
         url={https://doi.org/10.1007/s00222-024-01257-1},
      review={\MR{4756988}},
}

\bib{Milne-EC}{book}{
      author={Milne, James~S.},
       title={\'{E}tale cohomology},
      series={Princeton Mathematical Series},
   publisher={Princeton University Press, Princeton, NJ},
        date={1980},
      volume={No. 33},
        ISBN={0-691-08238-3},
      review={\MR{559531}},
}

\bib{NSW}{book}{
      author={Neukirch, J\"{u}rgen},
      author={Schmidt, Alexander},
      author={Wingberg, Kay},
       title={Cohomology of number fields},
     edition={Second},
      series={Grundlehren der mathematischen Wissenschaften [Fundamental
  Principles of Mathematical Sciences]},
   publisher={Springer-Verlag, Berlin},
        date={2008},
      volume={323},
        ISBN={978-3-540-37888-4},
         url={https://doi.org/10.1007/978-3-540-37889-1},
      review={\MR{2392026}},
}

\bib{Olsson_Fuji}{article}{
      author={Olsson, Martin},
       title={Fujiwara's theorem for equivariant correspondences},
        date={2015},
        ISSN={1056-3911,1534-7486},
     journal={J. Algebraic Geom.},
      volume={24},
      number={3},
       pages={401\ndash 497},
         url={https://doi.org/10.1090/S1056-3911-2014-00628-7},
      review={\MR{3344762}},
}

\bib{Olsson}{book}{
      author={Olsson, Martin},
       title={Algebraic spaces and stacks},
      series={American Mathematical Society Colloquium Publications},
   publisher={American Mathematical Society, Providence, RI},
        date={2016},
      volume={62},
        ISBN={978-1-4704-2798-6},
         url={https://doi.org/10.1090/coll/062},
      review={\MR{3495343}},
}

\bib{Romagny-Wewers}{incollection}{
      author={Romagny, Matthieu},
      author={Wewers, Stefan},
       title={Hurwitz spaces},
        date={2006},
   booktitle={Groupes de {G}alois arithm\'etiques et diff\'erentiels},
      series={S\'emin. Congr.},
      volume={13},
   publisher={Soc. Math. France, Paris},
       pages={313\ndash 341},
      review={\MR{2316356}},
}

\bib{Sawin20}{article}{
      author={Sawin, Will},
       title={{Identifying measures on non-abelian groups and modules by their
  moments via reduction to a local problem}},
        date={2020},
        note={preprint, arXiv:2006.04934v3},
}

\bib{Smith}{article}{
      author={Smith, Alexander},
       title={{The distribution of $\ell^{\infty}$ Selmer groups in degree
  $\ell$ twist families I}},
        date={2022},
        note={preprint, arXiv:2207.05674v2},
}

\bib{SP}{misc}{
      author={{Stacks project authors}, The},
       title={The stacks project},
         how={\url{https://stacks.math.columbia.edu}},
        date={2024},
}

\bib{Sawin-Wood-category}{article}{
      author={Sawin, Will},
      author={Wood, Melanie~Matchett},
       title={{The moment problem for random objects in a category}},
        date={2022},
        note={preprint, arXiv:2210.06279v2},
}

\bib{Sawin-Wood-classgroup}{article}{
      author={Sawin, Will},
      author={Wood, Melanie~Matchett},
       title={{Conjectures for distributions of class groups of extensions of
  number fields containing roots of unity}},
        date={2023},
        note={preprint, arXiv:2301.00791v2},
}

\bib{Wood-component}{article}{
      author={Wood, Melanie~Matchett},
       title={An algebraic lifting invariant of {E}llenberg, {V}enkatesh, and
  {W}esterland},
        date={2021},
        ISSN={2522-0144,2197-9847},
     journal={Res. Math. Sci.},
      volume={8},
      number={2},
       pages={Paper No. 21, 13},
         url={https://doi.org/10.1007/s40687-021-00259-2},
      review={\MR{4240808}},
}

\bib{Wang-Wood}{article}{
      author={Wang, Weitong},
      author={Wood, Melanie~Matchett},
       title={Moments and interpretations of the {C}ohen-{L}enstra-{M}artinet
  heuristics},
        date={2021},
        ISSN={0010-2571,1420-8946},
     journal={Comment. Math. Helv.},
      volume={96},
      number={2},
       pages={339\ndash 387},
         url={https://doi.org/10.4171/cmh/514},
      review={\MR{4277275}},
}

\end{biblist}
\end{bibdiv}

\end{document}